\newcommand{\1}{{\bf 1}}
\newcommand{\clt}{central limit theorem}
\newcommand{\ex}{{\rm e}\,}
\newcommand{\asy}{asymptotic}
\newcommand{\Imath}{I}
\newcommand{\Jmath}{J}
\newcommand{\ts}{time series}
\newcommand{\tsa}{\ts\ analysis}
\definecolor{darkblue}{rgb}{.1, 0.1,.8}
\definecolor{darkgreen}{rgb}{0,0.8,0.2}
\definecolor{darkred}{rgb}{.8, .1,.1}
\newtheorem{lemma}{Lemma}[section]
\newtheorem{theorem}[lemma]{Theorem}
\newtheorem{proposition}[lemma]{Proposition}
\newtheorem{definition}[lemma]{Definition}
\newtheorem{corollary}[lemma]{Corollary}
\newtheorem{example}[lemma]{Example}
\newtheorem{exercise}[lemma]{Exercise}
\newtheorem{remark}[lemma]{Remark}
\newtheorem{fig}[lemma]{Figure}
\newtheorem{tab}[lemma]{Table}
\newcommand{\bfu}{{\bf u}}
\newcommand{\bfU}{{\bf U}}
\newcommand{\bfC}{{\bf C}}
\newcommand{\bth}{\begin{theorem}}
\newcommand{\ethe}{\end{theorem}}
\newcommand{\bre}{\begin{remark}\em }
\newcommand{\ere}{\end{remark}}
\newcommand{\fre}{frequenc}
\newcommand{\ble}{\begin{lemma}}
\newcommand{\ele}{\end{lemma}}
\newcommand{\bde}{\begin{definition}}
\newcommand{\ede}{\end{definition}}
\newcommand{\bco}{\begin{corollary}}
\newcommand{\eco}{\end{corollary}}
\renewcommand{\a}{\alpha}
\newcommand{\bpr}{\begin{proposition}}
\newcommand{\epr}{\end{proposition}}
\newcommand{\bexer}{\begin{exercise}}
\newcommand{\eexer}{\end{exercise}}
\newcommand{\bexam}{\begin{example}}
\newcommand{\eexam}{\end{example}}
\newcommand{\bfl}{{\bf l}}
\newcommand{\bfi}{\begin{fig}}
\newcommand{\efi}{\end{fig}}
\newcommand{\btab}{\begin{tab}}
\newcommand{\etab}{\end{tab}}
\newcommand{\bfm}{{\bf m}}
\newcommand{\per}{periodogram}
\newcommand{\fidi}{finite-dimensional distribution}
\newcommand{\rv}{random variable}
\newcommand{\var}{{\rm var}}
\newcommand{\cov}{{\rm cov}}
\newcommand{\corr}{{\rm corr}}
\renewcommand{\P}{{\mathbb P}}
\newcommand{\E}{{\mathbb E}}
\newcommand{\bfTh}{\mbox{\boldmath$\Theta$}}
\newcommand{\bfomega}{\mbox{\boldmath$\w$}}
\newcommand{\bfla}{\mbox{\boldmath$\lambda$}}
\newcommand{\rhs}{right-hand side}
\newcommand{\df}{distribution function}
\newcommand{\beao}{\begin{eqnarray*}}
\newcommand{\eeao}{\end{eqnarray*}\noindent}
\newcommand{\beam}{\begin{eqnarray}}
\newcommand{\eeam}{\end{eqnarray}\noindent}
\newcommand{\beqq}{\begin{equation}}
\newcommand{\eeqq}{\end{equation}\noindent}
\newcommand{\bce}{\begin{center}}
\newcommand{\ece}{\end{center}}
\newcommand*{\dif}{\mathop{}\!\mathrm{d}}
\newcommand{\barr}{\begin{array}}
\newcommand{\earr}{\end{array}}
\newcommand{\stp}{\stackrel{\P}{\rightarrow}}
\newcommand{\std}{\stackrel{d}{\rightarrow}}
\newcommand{\stw}{\stackrel{w}{\rightarrow}}
\newcommand{\w}{\omega}
\newcommand{\nto}{n\to\infty}
\newcommand{\kto}{k\to\infty}
\newcommand{\xto}{x\to\infty}
\newcommand{\ov}{\overline}
\newcommand{\wt}{\widetilde}
\newcommand{\wh}{\widehat}
\newcommand{\vep}{\varepsilon}
\newcommand{\regvary}{regularly varying}
\newcommand{\regvar}{regular variation}
\newcommand{\bbr}{{\mathbb R}}
\newcommand{\bbz}{{\mathbb Z}}
\newcommand{\bbn}{{\mathbb N}}
\newcommand{\con}{convergence}
\newcommand{\st}{such that}
\newcommand{\fif}{if and only if}
\newcommand{\wrt}{with respect to}
\newcommand{\chf}{characteristic function}
\newcommand{\fct}{function}
\newcommand{\ds}{distribution}
\newcommand{\rep}{representation}
\newcommand{\cmt}{continuous mapping theorem}
\newcommand{\seq}{sequence}
\newcommand{\pro}{probabilit}
\newcommand{\ms}{measure}
\newcommand{\bfx}{{\bf x}}
\newcommand{\bfX}{{\bf X}}
\newcommand{\bfY}{{\bf Y}}
\newcommand{\bfj}{{\bf j}}
\newcommand{\bfy}{{\bf y}}
\newcommand{\bfG}{{\bf G}}
\newcommand{\bft}{{\bf t}}
\newcommand{\bfs}{{\bf s}}
\newcommand{\bfh}{{\bf h}}
\newcommand{\bfii}{{\bf i}}
\newcommand{\bfk}{{\bf k}}
\begin{document}
\today

\title[Whittle estimation based on the extremal spectral density]{Whittle estimation based on the extremal spectral density of a heavy-tailed random field}

\author[Damek, Mikosch, Zhao, Zienkiewicz]{Ewa Damek, Thomas Mikosch, 
Yuwei Zhao, Jacek  Zienkiewicz}
\address{E. Damek and J. Zienkiewicz\\ University of Wroclaw\\Department 
of Mathematics\\
50-384 Wroclaw\\
pl. Grunwaldzki 2/4\\ Poland}
\email{edamek@math.uni.wroc.pl\\
zenek@math.uni.wroc.pl}
\address{T. Mikosch\\ University of Copenhagen\\
Department of Mathematics\\
Universitetsparken 5\\
2100 Copenhagen\\
Denmark}
\email{mikosch@math.ku.dk}
\address{Y. Zhao\\
Fudan University\\
Shanghai Center for Mathematical Sciences\\
2005 Songhu Road\\
Shanghai\\
China}
\email{yuwei$\underline{~}$zhao@fudan.edu.cn}
\thanks{
Ewa Damek's research is partly supported by the NCN 
grant 2019/33/B/ST1/00207. 
Thomas Mikosch's research is partly supported
by  Danmarks Frie Forskningsfond Grant No 9040-00086B. Yuwei Zhao's
research is partly supported by the NSFC grants No.~11971115 and No.~1181086.} 
\begin{abstract}
We consider a strictly stationary random field on the two-dimensional 
integer lattice 
with \regvary\ marginal
and \fidi s. Exploiting the \regvar , we define the spatial extremogram
which takes into account only the largest values in  the random field.
This extremogram is a spatial autocovariance \fct . We define the 
corresponding extremal spectral density and its estimator, the extremal
\per . Based on the extremal \per , we consider the Whittle estimator
for suitable classes of parametric random fields including 
the Brown-Resnick random field and \regvary\ max-moving averages.  
\end{abstract}
\maketitle

\section{Introduction and motivation}
\subsection{A \regvary\ random field}\label{subsec:1}
We consider a $d$-dimensional  strictly stationary  random field
$(X_{\bfs})_{\bfs \in \bbz^2} $ with generic element $X$; the restriction to a two-dimensional lattice
is for notational convenience only. Our focus will be on heavy-tailed fields.
Following the recent developments by Basrak and Planini\'c 
\cite{basrak:planinic:2019} and Wu and Samorodnitsky
\cite{wu:samorodnitsky:2019}, we will deal with a {\em \regvary\
  field} $(X_{\bfs})_{\bfs \in \bbz^2}$. This means that there exists a
random field $(\Xi_\bfs)_{\bfs\in\bbz^2}$
and some $\a>0$ such that for any finite set $ A\subset \bbz^2$ and $ t>0$,
\beam\label{eq:1aa} 
\P \big( (X_\bfs/|X_{\bf0}|)_{\bfs \in A} \in \cdot\, \mid\,|X_{\bf0}| >x\big) &\stw
&  \P\big((\Xi_\bfs)_{\bfs\in A}\in \cdot \big)\,,\\
\dfrac{\P(|X|>t\,x)}{\P(|X|>x)}& \to &t^{-\a}\,,\qquad t>0\,,\qquad \xto\,.\label{eq:1bb}
\eeam
In what follows, we will often switch between the definition 
\eqref{eq:1aa}--\eqref{eq:1bb} and the equivalent sequential version
when $\xto$ is replaced by a \seq\ $a_n\to\infty$ \st\ 
$n\,\P(|X|>a_n)\to 1$ as $\nto$.
\par
Concrete examples studied
throughout the paper are {\em max-stable random fields} with Fr\'echet marginals and 
{\em max-moving average random fields} with iid \regvary\ Fr\'echet noise; see Section~\ref{sec:examples}.

\subsection{The spatial extremogram}\label{sec:extre} For a \regvary\ random
field we can introduce the {\em spatial extremogram}:
\beam\label{eq:spex00}
\gamma(\bfh) = \lim_{x\to \infty}\P\big( |X_{\bfh}|>x\,\mid\,|X_{\bf0}|>x \big)\,, \qquad \bfh\in\bbz^2\,.
\eeam
Calculation yields $\gamma(\bfh)= \E[1 \wedge |\Xi_{\bfh}|^\a]$.
It is not difficult to see that 
\beao
\gamma(\bfh)= \lim_{\xto}\corr\big(\1( |X_{\bfh}|>x)\,,\1(|X_{\bf0}|>x)\big)\,,\qquad \bfh\in\bbz^2\,.
\eeao
Hence $\gamma$ is a proper autocorrelation \fct\ on $\bbz^2$ with the special 
property that it does not assume negative value. In Section~\ref{sec:examples}
the extremogram will be calculated for some \regvary\ fields.
\par
In this paper we consider a special case of the 
general extremogram based on the events $\{x^{-1}X_{\bf0}\in A\}$ and 
$\{x^{-1}X_{\bfh}\in B\}$ with $A=B=\{x:|x|>1\}$. In the literature
this case runs under the names {\em extremal coefficient} or {\em tail dependence  \fct }. The main reason for choosing the special sets $A,B$ is
that we are interested in an estimation problem for particular classes
of random fields; we exploit the particular form of the \fct\ $\gamma(\bfh)$ to construct
a suitable estimator.
\par
The extremogram for \ts\  and general Borel sets $A,B$ was introduced by 
Davis and Mikosch~\cite{davis:mikosch:2009}.
Davis et al. \cite{davis:kluppelberg:steinkohl:2013}, 
Cho et. al.~\cite{cho:davis:ghosh:2016}, Buhl et al. \cite{buhl:davis:klueppelberg:steinkohl:2019}, Huser and Davison \cite{huser:davison:2013,huser:davison:2014} extended this notion to random fields and used it for parameter estimation based on the idea
of pairwise composite likelihood. For \ts , related work is due to 
Linton and Whang \cite{linton:whang:2007}, Han et al. \cite{han:linton:oka:whang:2016} who introduced the quantilogram and cross-quantilogram for measuring
the dependence in the non-extreme parts of the \ts .

\subsection{The empirical spatial extremogram}\label{sec:empextre}
We assume that we observe the random field $(X_\bfs)_{\bfs\in\bbz^2}$
on the index set $\Lambda_n^2=\{1,\ldots,n\}^2$ for increasing
$n$.  Consider an integer-valued  \seq\ $m=m_n\to \infty$ \st\
$\lim_{n\to    \infty} m_n/n =0$. In what follows, we often
suppress the dependence of $m$ on $n$. 
 Following Cho et al. \cite{cho:davis:ghosh:2016}, we denote the {\em
   empirical spatial extremogram} by
 \beam\label{eq:jan29a}
 \widetilde{\gamma} (\bfh) &=& \frac{m_n \sum_{\bfs, \bfs +\bfh \in
   \Lambda^2_n} \1 \big(|X_{\bfs}| >a_{m_n}\,, |X_{\bfs +\bfh }| >a_{m_n}
   \big)}{\#\Lambda_n^2}\nonumber\\&=& \frac{m_n}{n^2} \sum_{\bfs, \bfs +\bfh \in
   \Lambda^2_n} \1 \big(|X_{\bfs}| >a_{m_n}\,, |X_{\bfs +\bfh }| >a_{m_n}
   \big)\,,
 \eeam
where $\bfh$ are the observed lags in $\Lambda_n^2$. In
Section~\ref{subsec:empspatextr} we discuss growth conditions on $(m_n)$
and \asy\ properties of $\wt \gamma$.

\par

\subsection{The extremal spectral density}\label{subsec:extrspd}
For \ts , exploiting the idea that $\gamma$ is an autocorrelation \fct\ of a stationary process, 
Mikosch and Zhao~\cite{mikosch:zhao:2012, mikosch:zhao:2015} considered the 
Fourier series based on $\gamma$ (spectral density), introduced the notion of
an extremal \per\ and proved various basic properties of it.
Among them are \asy\ exponential limit \ds s  
and independence at finitely many distinct \fre ies. This is similar to the classical \per ;
see Brockwell and Davis \cite{brockwell:davis:1991}, Chapter 10, 
for the linear process case, Rosenblatt \cite{rosenblatt:1985}, Theorem 3 on p. 131, for strongly mixing stationary processes,  
and Peligrad and Wu \cite{peligrad:wu:2010} 
for general stationary ergodic \ts . Moreover,  \cite{mikosch:zhao:2012, mikosch:zhao:2015} used these properties
to provide limit theory for the integrated extremal \per\ with different 
weight \fct s. In particular,
they were able to prove limit results for the 
Grenander-Rosenblatt and Cram\'er-von Mises goodness-of-fit test statistics
based on a \fct al \clt\ for the integrated extremal \per . 
\par
The basic idea of this approach
goes back to Rosenblatt \cite{rosenblatt:1956}. Early on, he found
that the empirical \ds\ of the \per\ ordinates of a strictly stationary real-valued \seq\ at the Fourier \fre ies has many properties in common 
with the empirical process of independent exponential \rv s. 
By virtue of a \fct al empirical \clt ,  a continuous \fct al based on
the \per\ at the Fourier \fre ies converges in \ds\ to the corresponding
\fct al based on independent exponential \rv s. A modern (Vapnik-\v Cervonenkis) approach to the empirical spectral process aspects of the \per\ was worked out by Dahlhaus \cite{dahlhaus:1988}. Among others, he introduced the empirical
spectral process indexed by suitable \fct\ classes and explained the
relation with Whittle estimation, spectral goodness-of-fit tests, and various
other applications. 
\par
Similar to the \ts\ case, we can introduce the {\em extremal spectral
density} of the random field $(X_\bfs)_{\bfs\in\bbz^2}$:
\beao
f({\bfomega})=\sum_{\bfh\in\bbz^2} \ex^{i\,{\mathbf \w}^\top \bfh}\,\gamma(\bfh)\,,\qquad {\bfomega}\in [0,2\pi]^2=:\Pi^2\,,
\eeao
where we assume throughout that $\gamma$ is absolutely summable on $\bbz^2$.
Based on the empirical spatial extremogram $\wt \gamma$, we can define 
the  {\em extremal \per } for the
\regvary\ random field $(X_\bfs)_{\bfs\in\bbz^2}$ observed 
on $\Lambda_n^2$ as the empirical version of $f$:
\beao
\wt f(\bfomega) &=& \sum_{\|\bfh\|<n} 
\ex^{i\,\bfomega^\top \bfh}\,\wt \gamma(\bfh)
=\frac{m_n}{n^2}\Big| \sum_{\bft\in\Lambda_n^2} \1(|X_\bft|>a_{m_n})
                         \,\ex^{i \,{\bfomega}^\top\bft}
                         \Big|^2 \,, \qquad {\bfomega}\in \Pi^2\,. 
\eeao 
The goal of this paper is to consider classes of parametric extremal spectral densities
$f_\Theta$, $\Theta\in \bfTh$,
and to estimate the parameter $\Theta_0$ underlying the random field $(X_\bfs)$
through {\em 
Whittle-type estimators.}
This amounts to minimizing the score \fct\ (also called {\em Whittle likelihood;} see Whittle 
\cite{whittle:1953}) 
\beao
\int_{\bfomega \in\Pi^2}
\dfrac{\wt f(\bfomega)}{f_{\Theta}(\bfomega)} \,d\,\bfomega
\eeao
on the parameter set $\bfTh$. In this paper we will not work with this
integral likelihood but a Riemann sum approximation at the Fourier
\fre ies
which is more appropriate (see \eqref{eq:jan31a}) but we will keep the
name of Whittle likelihood.
\par
We will apply Whittle estimation with the extremal spectral density
to  max-stable random fields with Fr\'echet marginals and 
max-moving average fields with Fr\'echet noise. For these classes of random fields
maximum likelihood estimation is difficult since the joint
density of the observations is not tractable and pairwise composite
likelihood methods were employed instead; see Davis et al. 
\cite{davis:kluppelberg:steinkohl:2013}, Cho et al.
\cite{cho:davis:ghosh:2016}, Davison et al. \cite{davison:padoan:ribatet:2012},
Buhl et al. \cite{buhl:davis:klueppelberg:steinkohl:2019}, Huser and Davison \cite{huser:davison:2013,huser:davison:2014}.
The Whittle likelihood involves the extremal \per , i.e., the Fourier transform 
of the entire empirical spatial extremogram $\wt \gamma$. In other words,
this method exploits the whole information contained in $\wt \gamma$.
We show that Whittle estimation based on the extremal \per\ is a serious competitor to the aforementioned estimation techniques.
\par 
The paper is organized as follows. In Section~\ref{sec:preliminaries}
we introduce the necessary mixing conditions, provide a \clt\ for the 
empirical spatial extremogram and \asy\ theory for the extremal \per .
In particular, Theorem~\ref{thm:cltintper} is crucial for proving the \asy\
results on Whittle estimation. In Section~\ref{sec:examples} we introduce 
two major examples of stationary  \regvary\ random fields: the Brown-Resnick 
random field
and max-moving averages. These examples will be used throughout the paper
to illustrate the theory. In Section~\ref{sec:whittle} we present 
the main result of this paper: Theorem~\ref{thm:cltwhittle}
yields a \clt\ for the Whittle estimator based on the extremal \per .
While it is less complicated to verify the conditions of this theorem for
max-moving averages, it takes some effort to check these assumption 
for the Brown-Resnick process. This is achieved in Section~\ref{sec:br}.
We continue with a short simulation study in 
Section~\ref{sec:simu} where we 
focus on parameter estimation in the Brown-Resnick random field and max-moving averages. The remaining sections contain proofs.

\section{Preliminaries}\label{sec:preliminaries}\setcounter{equation}{0}
\subsection{Mixing conditions}
We will work under {\em $\a$-mixing} for a strictly stationary field 
$(X_\bfs)_{\bfs\in\bbz^2}$. We will use the max-norm $\|\bfx\|$ 
in $\bbr^2$ on its subset $\bbz^2$, write $\|T-S\|$ for the distance of two subsets
$T,S\subset\bbz^2$ \wrt\ this norm. 
Following Rosenblatt's \cite{rosenblatt:1956} classical definition, 
the $\alpha$-mixing coefficient between two $\sigma$-fields 
$\mathcal A,\mathcal B$ on $\Omega$
is given by 
\beao
\alpha(\mathcal{A},\mathcal{B})=\sup_{A\in\mathcal A,B\in\mathcal B}
\big| \P(A\cap B)-\P(A) \,\P(B)\big|\,.
\eeao
A related $\alpha$-mixing coefficient for the random field $(X_\bfs)$ 
can be found in Rosenblatt \cite{rosenblatt:1985}, p.~73: 
\beao
  \alpha_{j,k}(h)=\sup_{ S,T\subset \mathbb{Z}^2,
  \#S\le j, \#T\le k,
  \|S-T\|\ge h} \a\big(
  \sigma(X_{\bm{s}},\bm{s} \in
  S),\sigma(X_{\bm{t}},\bm{t}\in T)\big)\,,
\end{eqnarray*}
where $\sigma(X_{\bm{s}}, \bm{s} \in Q)$ denotes the $\sigma$-field
generated by the family of random variables in parentheses. For discussions
of mixing coefficients for random fields we refer to Doukhan \cite{doukhan:1994}, Section 1.3., Bradley \cite{bradley:1993,bradley:2005}, Rosenblatt 
\cite{rosenblatt:1985}, Chapter III.6. 
\par
In what follows, we modify condition {\bf (M1)} in 
Cho et al.~\cite{cho:davis:ghosh:2016} for the purposes of this paper.
These conditions are motivated by small-large block 
techniques which are standard in \asy\ theory for 
strictly stationary fields. We consider
integer \seq s $m_n,r_n\to\infty$ \st\ $m_n=o(n)$ and $r_n=o(m_n)$. 
Recall the definition of $(a_n)$ from Section~\ref{subsec:1}. 
{\em We write $a_m=a_{m_n}$.}
\subsection*{Condition (M1)}
Assume that there exist integer \seq s $(m_n)$, $(r_n)$ as above 
\st\\ $n\,r_n/m_n^{3/2}\to 0$, $r_n^4/m_n \to 0 $, and  
\begin{enumerate}
\item[\rm (1)]
For all $\delta>0$, 
\beam\label{eq:6aa} 
\lim_{h\to\infty} \limsup_{\nto} m_n \sum_{\bfh: h<\|\bfh\|\le r_n}
\P(|X_{\bf0}|>\delta a_m\,,|X_{\bfh}|>\delta a_m\big)&=&0\,.
\eeam
\item[\rm (2)] There exist $K,\tau>0$, $\rho\in (0,1)$ and a non-increasing \fct\ $\alpha(h)$ 
\st\ $\sup_{j,k}\alpha_{j,k}(h)\le \a (h)\le K\,\rho^{h^\tau}$ and 
\beam \label{eq:6bb}
&& \lim_{\nto}m_n\,\alpha(r_n)=0\,.
\eeam
\end{enumerate}
\bre\label{rem:1a} Our
condition {\bf (M1)} is stronger in various aspects than {\bf (M1)} in
\cite{cho:davis:ghosh:2016}. In particular, we require 
uniform bounds for the mixing coefficients $\a_{j,k}$ in (2) and we assume
a geometric-type bound for $(\alpha(h))$. This rate is satisfied by the
major example of this paper, the Brown-Resnick random field in Section~\ref{subsec:br}, whose estimation is the main motivation for writing this paper. In \cite{cho:davis:ghosh:2016} a condition similar
to \eqref{eq:6aa} appears, but these conditions are not directly comparable.
Conditions of the type of  \eqref{eq:6aa} are often referred to as {\em anti-clustering
conditions}; see Davis and Hsing \cite{davis:hsing:1995}, Basrak and Segers \cite{basrak:segers:2009}, Davis and Mikosch
\cite{davis:mikosch:2009}. They ensure that
simultaneous exceedances of high thresholds 
 for $X_\bfs$ with ``small indices'' $\bfs$ are rather unlikely given that
$|X_{\bf0}|$ is large. Indeed, \eqref{eq:6aa} is equivalent to
\beam\label{eq:2}
\lim_{h\to \infty}\limsup_{n\to \infty} \sum_{\bfh: h<\|{\bfh}\|\le r_n} \P\big(|X_{\bfh}|>\delta a_m \,\mid \,|X_{\bf0}|>\delta a_m \big)=0\,, \qquad \delta>0\,.
\eeam 
This fact is easily checked by \regvar\ of $|X|$ and the definition of
$(a_m)$. Conditions \eqref{eq:6bb} and $r_n^4/m_n\to0$  are satisfied for 
$r_n=[(C\,\log m_n)^{1/\tau}]$
for $C>1/(-\log \rho)$. 
\ere
\bre\label{rem:2a}
The main difference between our condition {\bf (M1)} and {\bf (M1)} 
in \cite{cho:davis:ghosh:2016}
is the rate
$n\,r_n/m_n^{3/2}\to 0$. 
Condition {\bf (M1)} in \cite{cho:davis:ghosh:2016} 
requires the alternative rate $m_n^3/n\to 0$. For example, 
if $r_n=[C\,\log n]$ and $m_n=n^{\xi}$ for some $\xi\in (0,1)$ and $C>0$,
then $n\, r_n/m_n^{3/2}\to 0$ holds for $\xi>2/3$ while $m_n^3/n\to 0$ is only
possible for $\xi<1/3$. Throughout this paper it will turn out that
rather large values of $m_n$ compared to $n$ are crucial for the \asy\ 
theory developed in this paper; see the discussion about condition {\bf (M2)}
in the subsequent Section~\ref{subsec:empspatextr}.
\ere
In Section~\ref{sec:examples} we will verify {\bf (M1)} for examples of 
\regvary\ fields.
\subsection{Asymptotic theory for the empirical spatial extremogram}\label{subsec:empspatextr}
 By  \regvar\ 
and stationarity of $(X_\bfh)$ we observe that
\beam\label{eq:3aa}
 \E[\wt \gamma(\bfh)]&=&\dfrac{m_n}{n^2}\E\Big[\sum_{\bfs,\bfs+\bfh\in\Lambda_n^2} \1(|X_\bfs|>a_m\,,|X_{\bfs+\bfh}|>a_m)\Big]\nonumber\\&\sim  & m_n\,\P(|X_{\bf0}|>a_m\,,|X_{\bfh}|>a_m)\nonumber\\
&\sim& p_m(\bfh):=\P(|X_\bfh|>a_m\mid |X_{\bf0}|>a_m)\to \gamma(\bfh)\,,
\eeam
and 
\beao
\dfrac{m_n}{n^2}\,\E\big[\#\{\bfs\in \Lambda_n^2: |X_\bfs|>a_m\}\big]&=&
m_n\,\P(|X|>a_m)\to 1\,,\qquad \nto\,.\nonumber
\eeao
Moreover, under {\bf (M1)} the variances of 
\beao
\dfrac{m_n}{n^2}\sum_{\bfs,\bfs+\bfh\in\Lambda_n^2} \1(|X_\bfs|>a_m\,,|X_{\bfs+\bfh}|>a_m)\,,\qquad \dfrac{m_n}{n^2}\,\#\{\bfs\in \Lambda_n^2: |X_\bfs|>a_m\}\,,
\eeao
converge to zero at rate $O(m_n/n^2)$; see (S7) in the
  supplementary material of \cite{cho:davis:ghosh:2016} using the arguments from \cite{davis:mikosch:2009}.
 Therefore for $\bfh\in\bbz^2$,
$\widetilde{\gamma}(\bm{h}) \stp\gamma(\bfh)$, $\nto$.
Cho et al. \cite{cho:davis:ghosh:2016} proved the corresponding central limit theory under their condition {\bf (M1)}. We modify this result under our 
condition {\bf (M1).}
\begin{theorem}\label{thm:clt}
Consider a strictly stationary regularly varying random field $(X_\bfs)_
  {\bfs\in \mathbb{Z}^2}$ with tail index $\alpha>0$ which also satisfies 
{\rm \bf (M1)}. Then for any finite set 
$A\subset \mathbb{Z}^2$, 
\beao
\dfrac{n}{\sqrt{m_n}} \Big[ \widetilde{\gamma}(\bfh)
  - p_m(\bfh) \Big]_{\bfh \in A} \std \big(Z_\bfh)_{\bfh\in A}\sim N({\bf0}, {\Sigma_A})\,,
\eeao
where the covariance matrix $\Sigma_A$ is given in Theorem 1 of
\cite{cho:davis:ghosh:2016} and $p_m(\bfh)$ is defined in \eqref{eq:3aa}. 
\end{theorem}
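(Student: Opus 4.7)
\medskip\noindent\textbf{Proof plan.}
The theorem is asserted to be a direct modification of Theorem~1 in Cho et al.~\cite{cho:davis:ghosh:2016}, so my plan is to re-run the standard big-small block strategy, replacing the steps that used their condition $m_n^3/n\to 0$ by new estimates tuned to our conditions $n\,r_n/m_n^{3/2}\to 0$ and $r_n^4/m_n\to 0$. Write $Y_\bfs(\bfh)=\1(|X_\bfs|>a_m,|X_{\bfs+\bfh}|>a_m)$. The first step is to show that only the \emph{interior} sum $S_n(\bfh)=\sum_{\bfs,\bfs+\bfh\in\Lambda_n^2} (Y_\bfs(\bfh)-\E Y_\bfs(\bfh))$ matters; the edge contributions in \eqref{eq:jan29a} involve $O(n\,\|\bfh\|)$ terms whose expected size is $\P(|X|>a_m,|X_\bfh|>a_m)=O(p_m(\bfh)/m_n)$, so after normalising by $n/\sqrt{m_n}\cdot m_n/n^2=\sqrt{m_n}/n$ they contribute $O(\|\bfh\|\sqrt{m_n}/n^2)\cdot o(1)=o(1)$ uniformly in the finite set $A$.

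Next, I partition $\Lambda_n^2$ into big square blocks of side $b_n$ separated by strips of width $r_n$, and choose $b_n$ so that $b_n\to\infty$, $b_n/n\to 0$, $r_n/b_n\to 0$ and the anti-clustering condition \eqref{eq:6aa} kills the within-block mass outside a neighbourhood of the diagonal; a convenient choice is $b_n\asymp m_n^{1/2}$, which is compatible with $n\,r_n/m_n^{3/2}\to 0$ (this is the rate that ensures the \emph{total} area of the separating strips is asymptotically negligible after multiplication by $\sqrt{m_n}/n$). Define block sums $T_k(\bfh)=\sum_{\bfs\in B_k}(Y_\bfs(\bfh)-\E Y_\bfs(\bfh))$, index the big blocks by $k=1,\dots,K_n$ with $K_n=O((n/b_n)^2)$, and show by a second-moment computation resting on \eqref{eq:6aa} that the contribution from the separating strips is $o_P(1)$ after normalisation.

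The third step is to decouple the big blocks. Using \eqref{eq:6bb} together with the characteristic-function unfolding that is standard for $\alpha$-mixing (e.g.~the telescoping estimate $|\E\prod_k\ex^{i\theta T_k}-\prod_k \E\ex^{i\theta T_k}|\le 16\,K_n\,\alpha(r_n)$, valid because the fields generating different big blocks are at $\|\cdot\|$-distance at least $r_n$), one replaces the $T_k$'s by independent copies at negligible cost, since $K_n\,\alpha(r_n)\lesssim (n/b_n)^2 \alpha(r_n)=o(1)$ once $m_n\,\alpha(r_n)\to 0$ and $b_n^2\gtrsim m_n$. For the resulting independent block sums I would invoke a Lindeberg--Feller CLT coordinatewise along $A$: the Lindeberg condition is trivial because each $Y_\bfs$ is a bounded indicator and hence $|T_k|\le b_n^2$, while the normalised variance is $(m_n/n^2)\sum_k \var(T_k)\to \sigma^2$ by a direct computation that uses \eqref{eq:6aa} to show that the only asymptotically non-vanishing pairs $(\bfs,\bft)$ in $\var(T_k)$ are those with $\|\bfs-\bft\|$ bounded, whence stationarity and regular variation reproduce exactly the covariance matrix $\Sigma_A$ of \cite{cho:davis:ghosh:2016}. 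The Cram\'er--Wold device extends the one-dimensional convergence to the joint statement over~$A$.

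The main obstacles I anticipate are technical rather than conceptual. The delicate point is the accounting in the second step: one needs that the small-block contribution, after the correct block-scale choice, is indeed $o_P(1)$ under the new constraint $n\,r_n/m_n^{3/2}\to 0$, and that the auxiliary hypothesis $r_n^4/m_n\to 0$ suffices to handle the higher-order cross terms (coming from up to four points straddling the separating strips) in the variance bound analogous to display~(S7) of \cite{cho:davis:ghosh:2016}. Once that bookkeeping is in place, the geometric rate $\alpha(h)\le K\rho^{h^\tau}$ in \eqref{eq:6bb} cleanly forces the decoupling error to zero and the rest of the argument follows \cite{cho:davis:ghosh:2016} essentially verbatim.
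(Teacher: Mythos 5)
Your overall architecture (large blocks separated by strips of width $r_n$, a moment bound for the strips, a characteristic\hyp{}function telescoping argument to replace the block sums by independent copies, a triangular\hyp{}array CLT, a variance computation via the anti-clustering and mixing conditions, and Cram\'er--Wold) is the same as the paper's. But your choice of block side $b_n\asymp m_n^{1/2}$ makes the decoupling step fail. The telescoping estimate costs a factor equal to the \emph{number} of big blocks, $K_n\asymp (n/b_n)^2=n^2/m_n$, and since $m_n=o(n)$ one always has $n^2/m_n>m_n$; under {\bf (M1)} the only available information is $m_n\,\alpha(r_n)\to0$ together with $n\le m_n^{3/2}/r_n$, which yields only $K_n\,\alpha(r_n)\le m_n^2\,\alpha(r_n)/r_n^2$, and this need not vanish. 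Your claim that $K_n\,\alpha(r_n)=o(1)$ ``once $m_n\,\alpha(r_n)\to0$ and $b_n^2\gtrsim m_n$'' is therefore incorrect: $b_n^2\gtrsim m_n$ gives $K_n\lesssim n^2/m_n$, and $n^2/m_n\le m_n$ would force $n\le m_n$, contradicting $m_n=o(n)$. The paper takes blocks of side $b_n=m_n$, so that $K_n=(n/m_n)^2\le m_n$ for large $n$ (because $n\le m_n^{3/2}$ by {\bf (M1)}), whence $K_n\,\alpha(r_n)\le m_n\,\alpha(r_n)\to0$; with this choice even a crude first\hyp{}moment bound on the strips gives exactly $c\,n\,r_n/m_n^{3/2}\to0$, i.e.\ the rate built into {\bf (M1)}. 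Your second\hyp{}moment treatment of the strips would also work, but you must enlarge the blocks to side $m_n$ for the decoupling to go through.

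A second concrete error: the Lindeberg condition is \emph{not} trivial. The crude bound $|T_k|\le b_n^2$ gives a normalized summand of size $b_n^2\sqrt{m_n}/n$, which for $b_n=m_n^{1/2}$ equals $m_n^{3/2}/n$ and \emph{diverges} under {\bf (M1)} (that condition forces $m_n^{3/2}\gg n\,r_n$); the same happens for $b_n=m_n$. So boundedness of the indicators does not annihilate the Lindeberg truncation; what saves the argument is that the truncation level $\vep\, n/\sqrt{m_n}$ is much larger than the standard deviation $O(\sqrt{m_n})$ of a block sum (since $m_n=o(n)$), which still requires a uniform\hyp{}integrability or higher\hyp{}moment argument rather than the boundedness you invoke. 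The remaining ingredients of your plan --- the edge\hyp{}term reduction, the three\hyp{}range split of the covariance sum using \eqref{eq:6aa} and the geometric mixing rate, and the Cram\'er--Wold device --- match the paper's proof and are sound.
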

Before we provide a sketch of the proof of this theorem some comments are in
place.
\par
The centering constant $p_m(\bfh)$ corresponds to 
$\E[\wt \gamma(\bfh)]$; see \eqref{eq:3aa}.
Davis and Mikosch \cite{davis:mikosch:2009} refer to $p_m(\bfh)$ as 
{\em pre-\asy\ centering,} and they also give examples where $p_m(\bfh)$
cannot be replaced by its limit $\gamma(\bfh)$.
This observation is not untypical in extreme value statistics.
Indeed, parameter estimation involving tail characteristics typically requires
{\em second-order tail \asy s}. 
\par
Pre-\asy\ centering in Theorem~\ref{thm:clt}
can be avoided only if the following second-order tail condition is satisfied
for every fixed $\bfh\in\bbz^2$:
$|p_m(\bfh) -\gamma(\bfh)\big|=o(m_n^{1/2}/n)$ as $\nto$. For the 
purposes of this paper we will need a related condition which requires
uniformity of the \con\ for an increasing number of indices:
\subsection*{Condition (M2)} We have $m_n\,\P(|X_0|>a_m)=1+O(m_n^{-1})$ as $\nto$ and 
\beam\label{eq:oktober}
\lim_{n\to \infty} \dfrac{n}{\sqrt{m_n}} \sup_{\bfh:1\le\|\bfh\|\le r_n}\big|p_m(\bfh) -\gamma(\bfh)\big| =0\,.
\eeam
\bre\label{rem:3a}
Now we return to Remark~\ref{rem:2a} concerning {\bf (M2)}. 
For the examples in this paper we typically have 
$p_m(\bfh)-\gamma(\bfh)=O(1/m_n)$ uniformly for $\bfh$. 
This means that \eqref{eq:oktober}
is satisfied if $n/m_n^{3/2}\to 0$. 
On the other hand, condition {\bf (M1)} in 
\cite{cho:davis:ghosh:2016} requires $m_n^3/n\to 0$. This means that 
{\bf (M2)} cannot hold under their {\bf (M1)}. Therefore we need
to modify the \clt\ in Cho et al. \cite{cho:davis:ghosh:2016} under
the assumption $n\,r_n/m_n^{3/2}\to 0$ required in {\bf (M1)}. 
\ere 
\bce
{\em In the proofs the symbol $c$ 
is a positive constant
whose value may vary from line to line.}
\ece
\noindent {\em Proof of Theorem~\ref{thm:clt}.}
 We indicate where the proof in Appendix A of the supplementary
 material of
Cho et al. \cite{cho:davis:ghosh:2016} has to be altered. 
The proof of the joint \asy\ normality of the empirical spatial extremogram 
at finitely many lags boils down to re-proving Proposition~A2 of the supplementary material in \cite{cho:davis:ghosh:2016} and then 
using the Cram\'er-Wold device.
We exploit the notation of  \cite{cho:davis:ghosh:2016}. In particular,
$\bfY_\bfs= (X_\bft)_{\bft\in \bfs+ A}$.
The field $(\bfY_\bfs)$ inherits
strong mixing from $(X_\bft)$ but the mixing coefficients of $(\bfY_\bfs)$
now depend on the finite set $A$. By the 
geometric-type rate of $(\alpha(h))$ the rate \fct\ for  $(\bfY_\bfs)$ is bounded by
$c\,(\alpha(h))$ for some constant $c>0$.
In view of \regvar\ of $(X_\bfs)$,
$\bfY_\bfs$ is \regvary\ as well with the same tail index, hence  
there exist non-null  Radon \ms s $\mu_{\bfY_{\bf0}}$ and  
$\mu_{\bfY_{\bf0},\bfY_{\bfh}}$ \st\ 
\beao
m_n\,\P(a_m^{-1} \bfY_{\bf0}\in C)&\to & \mu_{\bfY_{\bf0}}(C)\,,\\
m_n\,\P(a_m^{-1} \bfY_{\bf0}\in C\,,a_m^{-1} \bfY_{\bfh}\in C)
&\to& \mu_{\bfY_{\bf0},\bfY_{\bfh}}(C\times C)\,,
\eeao
provided $C$ is bounded away from $0$, both  $C$ and $C\times C$ are continuity sets of the corresponding 
limit \ms s; see \cite{davis:hsing:1995,basrak:segers:2009}.
\par
The following lemma is key to the proof of Theorem~\ref{thm:clt}. 
In contrast to Cho et al. \cite{cho:davis:ghosh:2016} (who appeal to
Stein's lemma in Bolthausen \cite{bolthausen:1982}) we apply a classical
small-large block argument in combination with \chf s.
 \ble Assume {\rm \bf (M1)} and that $C$, $C\times C$ 
are continuity sets with respect to $\mu_{\bfY_{\bf0}}$ and 
$\mu_{\bfY_{\bf0},\bfY_{\bfh}}$, respectively. Then 
\beam\label{eq:5aa}
\dfrac{\sqrt{m_n}}{n} S_n:= \dfrac{\sqrt{m_n}}{n} \sum_{\bfs\in\Lambda_n^2}
\Big[\underbrace {\1\big(a_m^{-1}\bfY_\bfs\in
  C\big)-\P\big(a_m^{-1}\bfY_\bfs\in C\big)}_{ \mbox{$=:\wt I_\bfs$}}\Big]
\std N({\bf0},\sigma_\bfY^2(C))\,,\qquad \nto\,,\nonumber\\
\eeam
where the limiting variance is given by
\beao
\sigma_\bfY^2(C)= \mu_{\bfY_{\bf0}}(C)+ 
2\sum_{ \bfh \in\bbz^2\setminus \{\bf0\}} \mu_{\bfY_{\bf0},\bfY_{\bfh}}(C\times C)\,. 
\eeao
\ele
\begin{proof}[Proof of the lemma]
We divide $\Lambda_n^2$ into $k_n^2=(n/m_n)^2$ disjoint blocks
where we assume without loss of generality that $k_n$ is an integer:
\beao
A_{j,k}=\{\bfl=(l_1,l_2): (j-1)m_n+1\le l_1\le j\,m_n\,,(k-1)m_n+1\le l_2\le k\,m_n\}\,,\qquad j,k=1,\ldots,k_n\,,    
\eeao
and the smaller blocks $\wt A_{j,k}\subset A_{j,k}$:
\beao
\wt A_{j,k}=\{\bfl: (j-1)m_n+1\le l_1\le j\,m_n-r_n\,,(k-1)m_n+1\le
l_2\le k\,m_n-r_n\}\,, \qquad j,k=1,\ldots,k_n\,. 
\eeao
If $k_n$ is not an integer one can apply similar bounds as
in \eqref{eq:october14} below to show that the expected value of the 
absolute value of the sum of the remaining terms $(m_n^{1/2}/n)\wt I_\bfl$ which are not covered by
the index sets $A_{j,k}$ and $\wt A_{j,k}$ is of the order 
$O((m_n^{1/2}/n)(k_n m_n)\P(|X|>a_m))=O(m_n^{-1/2})=o(1)$.
Write
\beao
S_{j,k}=\sum_{\bfs\in A_{j,k}
\backslash \wt A_{j,k}} \wt I_\bfs\,,\qquad
\wt S_{j,k}=\sum_{\bfs\in \wt A_{j,k}} \wt I_\bfs\,.
\eeao
First we consider 
\beam\label{eq:october14}
\dfrac{\sqrt{m_n}}{n} \sum_{j,k=1}^{k_n}\E[|S_{j,k}|]
&\le & c\,\dfrac{\sqrt{m_n}}{n} k_n^2 (m_nr_n)\,\P(|X|>a_m)\le c\,\dfrac{n}{m_n^{3/2}}\,r_n\,.
\eeam
The \rhs\ converges to zero by assumption {\bf (M1)}.
Hence it suffices to prove the \clt\ for  $\wt T_n =
(\sqrt{m_n}/n) \sum_{i=1}^{k_n^2} \wt S_{i}$  with limit \ds\ $N({\bf0},\sigma_\bfY^2(C))$ 
where the $k_n^2$ partial sums $S_{j,k}$ are denoted by
$(\wt S_i)_{i=1,\ldots,k_n^2}$. 
We introduce iid copies $(\wt S_i')_{i=1,\ldots}$ of $\wt S_1$ and write
$\wt T_n' =(\sqrt{m_n}/n) \sum_{i=1}^{k_n^2} \wt S_i'\,.$
 Then for $x\in \bbr^d$, using \eqref{eq:6bb},
\beao
\Big|\E\big[\ex^{i\,x^\top \wt T_n}\big]- \E\big[\ex^{i\,x^\top\wt T_n'}\big]\Big|
&=&\Big|\sum_{k=1}^{k_n^2} \E\Big[\prod_{j=1}^{k-1}\ex^{i\,x^\top\wt S_j} \Big(\ex^{i\,x^\top \wt S_k}
-\E\big[\ex^{i\,x^\top\wt S_k} \big]
\Big)\prod_{l=k+1}^{k_n^2} \E\big[\ex^{i\,x^\top\wt S_1}\big]\Big]\Big|\\
&\le & c\,k_n^2 \alpha(r_n)\le  c\,m_n\alpha(r_n)\to 0\,.
\eeao
In the last step we also used  {\bf (M1)}: since $n/m_n^{3/2}\to 0$
we have $k_n^2\le (n/m_n)^2\le m_n$ for sufficiently large $n$.

Hence the \clt\ for $\wt T_n$ will follow if it holds for $\wt T_n'$.
We will then apply a classical central limit theorem for
  triangular arrays of independent random variables; see for example,
  Theorem 4.1 in Petrov~\cite{petrov:1995}. Thus it remains to
calculate the \asy\ variance $\sigma_\bfY^2(C)$ of $\wt T_n'$. Write 
\beao
c_{\bfs,\bft}=\cov\big(\1(a_m^{-1}\bfY_\bfs\in C), \1(a_m^{-1}\bfY_\bft\in C)\big)\,,
\qquad \bfs,\bft\in \bbz^2\,.
\eeao
We have for fixed $h\ge 1$, by \regvar\ and the definition of $(a_m)$,
\beam
\var(\wt T_n')&=& k_n^2 (m_n/n^2)\var(\wt S_{1})
\le m_n^{-1}\var(\wt S_{1})\nonumber\\
&=& m_n^{-1} (m_n-r_n)^2\,\var(\1(a_m^{-1}\bfY_{\bf0}\in C))
+ m_n^{-1} \sum_{\bft,\bfs\in \Lambda_{m_n-r_n}^2\,,\bft\ne \bfs}c_{\bfs,\bft}\nonumber\\
&= & \mu_{\bfY_{\bf0}}(C)+o(1)\nonumber\\&& +m_n^{-1}\sum_{\bfs,\bft\in\Lambda_{m_n-r_n}^2}
\big(\1_{(0,h]}(\|\bft-\bfs\|)+\1_{(h,r_n]}(\|\bft-\bfs\|)+
\1_{(r_n,\infty)}(\|\bft-\bfs\|)\big)\,c_{\bfs,\bft}\nonumber\\
&=: &\mu_{\bfY_{\bf0}}(C) +o(1)+I_1+I_2+I_3\,, \qquad \nto\,.\label{eq:7aa}
\eeam
By stationarity we observe that there are only finitely many 
distinct covariances in $I_1$ and the number of their appearances
is proportional to $m_n^2$. Therefore, by stationarity and \regvar ,
\beao
I_1\to 2\sum_{
\bfh\in\bbz^2 : 0<\|\bfh\|\le h} 
\mu_{\bfY_{\bf0},\bfY_{\bfh}}(C\times C)\,.
\eeao 
Since $C$ is bounded away from zero there is $\delta>0$ \st
\beao
I_2&\le &
 m_n \sum_{\bfh\in \Lambda_{m_n-r_n}^2\,,h<\|\bfh\|\le r_n}
\P(|\bfY_{\bf0}|>\delta a_m\,,|\bfY_{\bfh}|>\delta a_m\big)\,.
\eeao
In view of the anti-clustering condition \eqref{eq:6aa} the \rhs\ converges to zero by
first letting $\nto$ and then $h\to\infty$.
Finally, by assumption \eqref{eq:6bb} and 
since $\alpha(\|\bfh\|)\le K\,\rho^{\|\bfh\|^{\tau}}$ (assuming without loss of generality that $\alpha(r_n)>0$), we get
\beao
I_3&\le& c\,m_n\,\alpha(r_n) \sum_{\bfh: r_n<\|\bfh\|}\dfrac{\a(\|\bfh\|)}{\a(r_n)}\to 0\,.
\eeao 
In view of the previous calculations for $I_1,I_2,I_3$
the \rhs\ in \eqref{eq:7aa} converges to the desired \asy\ variance
\beao
\var(T_n')\to \mu_{\bfY_{\bf0}}(C)+ 2
\sum_{\bfh \in \bbz^2\setminus \{\bf0\}} \mu_{\bfY_{\bf0},\bfY_{\bfh}}(C\times C)\,.
\eeao 
\end{proof}

\subsection{Asymptotic theory for the extremal
  periodogram}\label{eq:exspden}
Mikosch and 
Zhao~\cite{mikosch:zhao:2012, mikosch:zhao:2015} showed 
that the {\em (integrated) extremal periodogram} 
for \ts\ shares several key
asymptotic properties with the (integrated) periodogram of a linear process 
(such as \asy ally independent exponential \ds\  at distinct \fre ies); cf. 
Brockwell and Davis~\cite{brockwell:davis:1991}, Section 10.3.
\par
For technical convenience,
in the remainder of this paper we will work with a modification 
$\wh\gamma$ of the empirical extremogram $\wt\gamma$ defined in 
\eqref{eq:jan29a}: it has the same structure as 
$\wt \gamma$ except that the indicator \fct s $\1(|X_\bfs|>a_m)$ 
are replaced by the centered versions
$\wh I_\bfs=\1(|X_\bfs|>a_m)-\P(|X|>a_m)$. The resulting empirical  extremogram
and extremal \per\ (we keep the same names) are then given by
\beam
\wh \gamma(\bfh)&=& \dfrac {m_n}{n^2} \sum_{\bfs,\bfs+\bfh\in\Lambda_n^2}
\wh I_\bfs\wh I_{\bfs+\bfh}\,,\qquad \bfh\in\bbz^2\,,\nonumber\\
 \wh f(\bfomega) &= &\frac{m_n}{n^2}\Big| \sum_{\bft\in\Lambda_n^2} \wh I_\bft
\,\ex^{i \,{\bfomega}^\top\bft} \Big|^2  = \sum_{\|\bfh\|<n} 
\wh \gamma(\bfh) \cos (\bfomega^T \bfh)
\,, \qquad {\bfomega}\in \Pi^2\,.\label{eq:2021a}
\eeam  
As a matter of fact, the aforementioned \asy\ results for 
$\wt \gamma$ and $\wh\gamma$ are the same. However, working with the 
centered quantities $\wh I_\bft$ will be beneficial for Fourier analysis 
when mixing conditions are required. 
We also observe that $\wt f(\bfla_\bfj)=\wh f(\bfla_\bfj)$ 
for {\em Fourier \fre ies} $\bfla_\bfj=2\pi\bfj/n\in \Pi^2$.

\par
Mikosch and
Zhao~\cite{mikosch:zhao:2015} proved that
the {\em integrated extremal periodogram} of a stationary 
regularly varying time series satisfies a 
central limit theorem indexed by suitable \fct s. Here we show a related
result for the integrated extremal
periodogram of the random field $(X_{\bfs})_{\bfs \in \Lambda_n^2}$. For practical
purposes it will be convenient to use the Riemann sum approximation
$
  \wh{F}(g) = \frac{(2 \pi)^2}{n^2} \sum_{\bfj\in\Lambda_n^2}
  \wh f(\bfla_{\bfj}) g(\bfla_{\bfj} )
$
to the extremal spectral \ds\ $F(g) = 
\int_{\Pi^2} f(\bfomega)\,
g (\bfomega) \dif \bfomega$ 
indexed by suitable \fct s $g \in L^1
(\Pi^2)$.
The following result will be crucial
for the \asy\ normality of the Whittle estimator; the proof 
is given in Section~\ref{sec:cltintper}.
\bth \label{thm:cltintper}
Assume that $(X_{\bfs })_{\bfs \in \mathbb{Z}^2}$ is 
stationary regularly varying with index $\a>0$ and satisfies 
{\bf (M1)} and {\bf (M2)}. We further assume that 
\beam 
\label{eq:newlabel} 
&& \lim_{n\to \infty}\frac{n}{\sqrt{m_n}} \sum_{\bfh:r_n<\|\bfh\|\le n}\gamma (\bfh)= 0 \,,\quad 
\lim_{\nto} n^7\, \a(r_n)=0\,, \quad (\log n)^4 \dfrac{r_n^2}{m_n}+\dfrac{r_n^4}{m_n}=o(1)\,.   \eeam
Let $g$ be a periodic function satisfying 
$g(x_1, x+2 \pi) = g(\bfx) = g(x_1 +2\pi, x_2)$, $\bfx \in
 \Pi^2$, and
\beam  
 &&\sup_{\bfx \in \Pi^2} \Big| \frac{\partial^2 g(\bfx)}{\partial x_1
   \partial x_2}\Big| <\infty\,.\label{eq:gderiv}
\eeam
Moreover assume that the Fourier coefficients
\beao
\psi_\bfh=\int_{\Pi^2} \cos
  (\bfh^{\top } \bfomega) g(\bfomega) \dif \bfomega\,,\qquad \bfh\in\bbz^2\,,
\eeao
are absolutely summable.
Then the following central limit theorem holds:
\begin{align}
\label{eq:cltintper}
\frac{n}{\sqrt{m_n}}  \dfrac{(2\pi)^2}{n^2} \sum_{\bfj  \in\Lambda_n^2} (\wh{f} (\bfla_{\bfj })
  - f(\bfla_{\bfj} ) )\,g(\bfla_{\bfj}) \std G:=\sum_{\bfh \in \mathbb{Z}^2} \psi_\bfh\,Z_{\bfh} \,,
\end{align}
where $(Z_{\bfh})$ is a mean-zero Gaussian random field whose covariance 
structure is indicated in Theorem~\ref{thm:clt}, and the infinite series
constituting $G$ converges in \ds .
\end{theorem}
\bre
The covariance structure of $(Z_\bfh)$ is described in Cho et al. 
\cite{cho:davis:ghosh:2016}. We refrain from giving formul\ae\ here
because they are complicated and do not contribute to a better 
understanding of this theorem.
\ere
\begin{remark}\label{rem:29}\rm
 Conditions {\bf (M1)} requires that the mixing rate $\alpha(h)$ decays
 faster to zero than any power function as $h\to \infty$. This implies
 that, if $\lim_{n\to \infty} n^7 \alpha(r_n) =0$,
we have\\
$ \lim_{n\to \infty}\, n^7 \sum_{\bfh:
  \|\bfh\|>3r_n}\alpha(\|\bfh \|) =0$.  To get this
one can use a similar argument as in Remark~\ref{rem:1a} with 
$r_n = [(7 C \log n)^{1/\tau}]$ for $C>1/(-\log
\rho)$.
\end{remark}

\section{Examples of \regvary\ random fields}\label{sec:examples}\setcounter{equation}{0}
\subsection{The Brown-Resnick process}\label{subsec:br}
In the context of this paper we consider the special case of
a strictly stationary {\em Brown-Resnick random field} with unit Fr\'echet
marginals  given by
\begin{align}\label{eq:brproc}
X_{\bm{s}} = \sup_{j\ge 1} \Gamma_j^{-1} \ex^{W_{\bm{s}
 }^{(j)} -\delta(\bfs)}\,, \qquad \bm{s} \in \mathbb{R}^2\,,
\end{align}
where
$\Gamma_j=E_1+\cdots +E_j$, $j\ge 1$, $(E_i)$ is an iid \seq\ of standard
exponential \rv s which are independent of the \seq\ of 
iid mean-zero Gaussian random fields $(W_{\bm{s}}^{(j)})_{\bm{s} \in \mathbb{R}^2}$,
$j\ge 1$, with stationary increments, $\delta(\bfs)=\var(W_\bfs^{(j)})/2$.
By construction, $\P(X\le x)= \Phi_1(x)=\ex^{-x^{-1}}$, $x>0$.
A generic element $W$ has 
covariance function 
\beao
\cov \big( W_{\bm{s}}, W_{\bm{t}} \big) = \dfrac{c}{2}\big(
  \|\bm{s}\|^{2H} +\|\bm{t}\|^{2H} - \|\bm{s} -\bm{t}\|^{2H}\big)\,,
\eeao
for some $H\in (0,1]$, $c>0$. Here and in the remainder of this 
subsection $\|\cdot\|$ denotes the Euclidean norm. 
\par
The Brown-Resnick field is a special max-stable process. The latter class
was introduced by de Haan \cite{dehaan:1984}. Kabluchko et al.
\cite{kabluchko:schlather:dehaan:2009}
extended the original work of Brown and Resnick \cite{brown:resnick:1977} 
(who focused on Brownian motion $W$) to general Gaussian processes $W$
with stationary increments. 
\par
We will consider the restriction of the Brown-Resnick field to $\bbz^2$.
Cho et al.  \cite{cho:davis:ghosh:2016} derived the spatial extremogram 
\beam\label{eq:brextrem}
\gamma (\bm{h}) = 2\,\ov \Phi( \sqrt{\delta(
  \bm{h}}))= 2\,\big(1- \Phi \big(\sqrt{\delta (\bfh)}\big)\big) \,, 
  \quad \bm{h}\in \mathbb{Z}^2\,,
\eeam
where $\Phi$ and $\ov \Phi$ 
stand for the standard normal \df\ and its
right tail, respectively. They also calculated uniform bounds for the $\alpha$-mixing 
coefficients (see (34) in  \cite{cho:davis:ghosh:2016})
\beao
\alpha_{j,k} (h)\le c_0\,\sup_{l\ge h}
\dfrac {\ex^{- \delta(l)/2}}{\sqrt{\delta(l)}}=:\alpha(h) \,,
\eeao
for some constant $c_0$ independent of $j,k$. 
They proved that $(X_\bfs)_{\bfs\in\bbz^2}$ is \regvary\ with index $\a=1$. 
Now we choose $m_n=n^{\zeta}$ for $\zeta\in (2/3,1)$ and 
  $r_n=[C\,(\log n + \log m_n)]^{1/(2H)}$ for $C$ sufficiently
large. Then the conditions $nr_n/m_n^{3/2}+r_n^4/m_n\to 0$ and (2) in
{\bf (M1)} are satisfied. 

Cho et al. \cite{cho:davis:ghosh:2016} also derived the formula
\begin{eqnarray*}
 \P(X_{\bf0} \le y\,, X_{\bfh} \le y) 
 & =& \exp \Big\{ - 2 y^{-1} \Phi\big(
     \sqrt{\delta(\bfh)} \big)
      \Big\}\,, \qquad y>0\,. 
\end{eqnarray*}
Hence, choosing $a_n$ \st\ $\P(X>a_n)=1/n$, we have $a_n=n+ O(n^{-1})$
which implies that $\lim_{n\to \infty}m_na_m^{-1} =1$.
Moreover, by a Taylor expansion, uniformly for $\bfh\in\bbz^2$,
\begin{eqnarray*}
 & &m_n\P(X_{\bf0} >a_m\,, X_{\bfh} >a_m)\\ 
 & =& m_n\Big(1- \P (X_{\bf0}\le a_m) - \P (X_{\bfh }\le a_m) + \P
      (X_{\bf0} \le a_m\,, X_{\bfh} \le a_m)\Big)\\ 
 & =& m_n \Big( \frac 2{m_n}-1+ \exp\Big\{-2a_m^{-1}
      \Phi\big(\sqrt{\delta(\bfh)} \big) \Big\} \Big)\\
 & =& 2 + m_n \Big(- 2 a_m^{-1} \Phi ( \sqrt{\delta (\bfh)} ) + 
             \big(2 a_m^{-1} \Phi ( \sqrt{\delta (\bfh)} )  \big)^2 + O(a_m^{-3}) \Big)\\
 & =& \gamma(\bfh)+ O(1/m_n)\,.
\end{eqnarray*}
Using this relation and the growth rates of $(r_n)$ and $(m_n)$, (1) of {\bf (M1)} is satisfied.
Similarly, we can show that 
$p_m(\bfh)-\gamma(\bfh)=O(1/m_n)$ uniformly for $\bfh\in\bbz^2$
and therefore {\bf (M2)} is also
satisfied for $(m_n)$ and $(r_n)$ chosen as above; see
Remark~\ref{rem:3a}.
Choosing $C$ sufficiently large, we also observe that
\eqref{eq:newlabel} holds; see Remark~\ref{rem:29}.

\subsection{Max-moving averages}\label{subsec:mma}
Here we follow Cho et al.  \cite{cho:davis:ghosh:2016}, Section~3.1.
We start with an iid unit Fr\'echet random field $(Z_\bfs)_{\bfs\in\bbz^2}$
and a non-negative weight \fct\ $(w(\bfs))_{\bfs\in\bbz^2}$. The process
\beao
X_\bft= \max_{\bfs\in\bbz^2} w(\bfs)\,Z_{\bft-\bfs}\,,\qquad \bft\in\bbz^2\,,
\eeao
is called {\em max-moving average} (MMA). It is a max-stable process
with unit Fr\'echet marginals.
Obviously, $(X_\bft)$ is strictly stationary if it is finite a.s. Since $\P(Z_{\bf0}\le x)=\Phi_1(x)$, $x>0$,
it is easily seen that 
\beao
\P(X_{\bf0}\le x)= \ex^{-x^{-1} \sum_{\bfs\in \bbz^2} w(\bfs)}=:\Phi_1^{ w_0}(x)\,,
\eeao
and therefore $w_0<\infty$ is necessary and sufficient
for the existence of $(X_\bft)$.
\par
Next we calculate the extremogram. We have
$
m_n\,\P(X>a_m)= 
 m_n(1- \ex^{-a_m^{-1} w_0})\to 1\,.
$
Therefore we may choose $a_m=m_n\,w_0$. Then we have for $x>0$, uniformly for
$\bfh\in\bbz^2$,
\beao
\P( X_\bfh>a_m\mid X_{\bf0}>a_m)
&=& \dfrac{\P\Big(\max_{\bfs\in\bbz^2} w(\bfs+\bfh) Z_{-\bfs}\wedge 
\max_{\bfs \in\bbz^2} w(\bfs) Z_{-\bfs }>a_m\Big)}{\P (X > a_m ) }\\
&=& \dfrac{\sum_ {\bfs\in\bbz^2} w(\bfs)\wedge 
w(\bfs+\bfh)}{\sum_{\bfs\in \bbz^2} w(\bfs)}+O(1/m_n)\\
&=&\gamma(\bfh) + O(1/m_n)\,.
\eeao
For a finite MMA we have $w(\bfs)=0$ for $\|\bfs\|\ge k_0$ for some $k_0>1$.
Then {\bf (M1), (M2)} are easily verified for 
suitable choices of $(r_n)$, $(m_n)$.
\par
If the dependence
ranges over infinitely many lags the anti-clustering condition
\eqref{eq:2} can still be verified. Indeed, the Taylor expansion
argument used above holds uniformly for $\bfh$ and therefore
\beao
\sum_{\bfh\in\bbz^2:h<\|\bfh\|\le r_n} \P\big(\bfX_\bfh>\vep a_m\mid \bfX_{\bf0}>\vep a_m\big)&\le & \sum_{\bfh\in\bbz^2:h<\|\bfh\|\le r_n} \gamma(\bfh)+c\,m_n^{-1}
\#\big\{\bfh\in\bbz^2: \|\bfh\|\le r_n\big\}\\
&\le  &  \sum_{\bfh\in\bbz^2:h<\|\bfh\|} \gamma(\bfh)+ O(r_n^2/m_n)\,.
\eeao
The \rhs\ converges to zero if $(\gamma(\bfh))$ is 
summable and $r_n^2/m_n\to 0$. 
The strong mixing condition follows from a result by Dombry and Eyi-Minko
\cite{dombry:eyiminko:2012} (see Proposition 1 in \cite{cho:davis:ghosh:2016}),
and one obtains
\beao
\alpha_{j,k}(h)\le c\sum_{\bfs\in S,\bft\in T: \|\bft-\bfs\|=\|\bfh\|\ge h} \gamma(\bfh)
\,. 
\eeao
The expression on he \rhs\ can be taken as a definition of $\alpha(h)$.
For example, if the weights $w(\bfs)$ are chosen \st\ $\alpha(h)$ decays exponentially fast then we can find $(r_n)$ and $(m_n)$ satisfying {\bf (M1), (M2)}
and \eqref{eq:newlabel}.
\section{The Whittle estimator}\label{sec:whittle}\setcounter{equation}{0}
Throughout this section we consider stationary \regvary\ 
random fields $(X_{\bft}(\Theta))_{\bft \in \mathbb{Z}^2}$ with 
the same tail index $\a>0$ which are parametrized
by $\Theta=(\theta_1,\ldots,\theta_s)\in \bfTh$ for some $s\ge 1$.
The parameter set $\bfTh$ is a compact subset of $\bbr^s$.  
Our observations stem from $(X_\bft)=(X_{\bft}(\Theta_0))$ 
for some parameter $\Theta_0\in\bfTh$ which is also assumed to be an 
inner point of $\bfTh$.  
Our goal is to estimate $\Theta_0$ from the observations 
$(X_\bft)_{\bft\in\Lambda_n^2}$. We write $f_\Theta$ and 
$\gamma_\Theta$ for the extremal spectral density and extremogram of
 $(X_{\bft}(\bfTh))$, respectively. 
The {\em Whittle estimator of $\Theta_0$} is the minimizer $\Theta_n$ on $\bfTh$ of the {\em discrete Whittle likelihood  \fct } 
\beam\label{eq:jan31a}
  \sigma^2_n(\Theta ) := \dfrac{\overline{\sigma}^2_n(\Theta )}{n^2} 
\sum_{\bfj \in
    \Lambda_n^2} \dfrac{\wh {f}(\bfla_{\bfj})}{ f_{\Theta} (\bfla_{\bfj})}\,,\quad 
\mbox{ where }\quad \overline{\sigma}_n^2 ( \Theta) = \exp\Big(n^{-2} \sum_{\bfj
 \in \Lambda_n^2 } \log f_{\Theta}(\bfla_{\bfj}) \Big)\,. 
\eeam
Throughout we will work under the following assumptions.

\subsection*{Condition (W)} 
\begin{enumerate}
\item[\rm (1)]
$(X_{\bft})_{\bft
\in \mathbb{Z}^2}$ satisfies {\bf (M1)} and {\bf
(M2)} and in addition \eqref{eq:newlabel}.
\item[\rm (2)] 
for all $\Theta\in\bfTh$,
\beam\label{eq:jan30a} 
  0 < \inf_{(\bfomega, \Theta) \in \Pi^2 \times \bfTh}
  f_{\Theta}(\bfomega )  \,.
\eeam
\item[\rm (3)] For each $\Theta\in \bfTh$ with $\Theta\ne \Theta_0$,
$f_{\Theta_0}/f_{\Theta}$ is not constant on $\Pi^2$.
\item[\rm (4)]
  The following relations hold 
\begin{eqnarray}
\label{eq:new41}
  \sup_{\Theta\in\bfTh}\sum_{\bfh \in \mathbb{Z}^2} |h_1 h_2 \gamma_\Theta (\bfh)| & < & \infty \,,\\ 
\label{eq:new42}
\max_{i,j\in \{1,\ldots, s\}}\sup_{\Theta \in \bfTh} \sum_{\bfh
  \in \mathbb{Z}^2} \Big[\Big| \frac{\partial
  \gamma_{\Theta}(\bfh)}{\partial\theta_{i}} \Big|+ \Big| \frac{\partial^2 \gamma_{\Theta}
  (\bfh)}{\partial \theta_{i} \partial \theta_{j}} \Big|\Big]
& < & \infty\,. 
\end{eqnarray}
\item[\rm (5)]The matrix $\var \big( \partial \log f_{\Theta_0}
  (\bfU)/\partial \Theta \big)$ with $\bfU$ uniform on 
  $\Pi^2$ is non-singular. 
\end{enumerate}
\bre
Condition \eqref{eq:new41} implies 
the summability of $(\gamma_{\Theta}(\bfh))$ uniformly on $\Theta\in\bfTh$, ensuring in particular that
\beam\label{eq:march1a}
\sup_{(\bfomega,\Theta)\in \Pi^2\times \bfTh} f_\Theta(\bfomega)<\infty\,.
\eeam
We also have
\begin{eqnarray*}
  \sup_{\Theta \in \bfTh} \sup_{\bfomega \in \Pi^2}\Big| \frac{\partial^2
  f_{\Theta}(\bfomega)}{\partial \omega_1 \partial \omega_2} \Big| <\infty
\quad\mbox{and}\quad  \sup_{\Theta \in \bfTh} \sup_{\bfomega \in \Pi^2}\Big| \frac{\partial^2
  f^{-1}_{\Theta}(\bfomega)}{\partial \omega_1 \partial \omega_2} \Big| <\infty\,.
\end{eqnarray*}
The first relation follows from \eqref{eq:new41}  and the second one 
by a combination of \eqref{eq:jan30a} and \eqref{eq:new41}; for details
see \eqref{eq:rev1}.
\ere
Now we present our main result about the \asy\ normality of the 
Whittle estimator $\Theta_n$ of $\Theta_0$.
\bth\label{thm:cltwhittle}
Assume that condition {\bf (W)} holds. Then a unique minimizer 
$\Theta_n$ of the discrete Whittle likelihood \fct\ 
\eqref{eq:jan31a} exists and   
\begin{eqnarray}
\label{eq:cltwhittle}
  \frac{n}{\sqrt{m_n}} \big( \Theta_n - \Theta_0 \big) \overset{d}{\to
  } \bfG\sim N({\bf0},W)\,,
\end{eqnarray}
where the Gaussian vector $\bfG$ is defined in \eqref{eq:march8a}. 
\ethe

\section{Example: The Brown-Resnick random field}\label{sec:br}\setcounter{equation}{0}
From Section~\ref{subsec:br} we recall the definition of a 
Brown-Resnick field with parameter $H\in (0,1)$ and extremogram
$\gamma_H (\bm{h}) = 2\,\ov \Phi( \sqrt{c}\|\bfh\|^H)$; see \eqref{eq:brextrem}.
In this section $\|\cdot\|$ is Euclidean distance.
We will verify the 
conditions (1)-(5) of {\bf (W)} in Theorem~\eqref{thm:cltwhittle} for this case.
\par
As a parameter space for $H$ we choose $\bfTh=[\delta,1-\delta]$ for a fixed 
but arbitrarily small $\delta>0$ and assume $H_0\in (\delta,1-\delta)$.
It will be convenient to refer to $f_H'$, $\gamma_H'$, etc. as the derivatives of $f_H$, $\gamma_H$, etc. \wrt\ $H$.\\
{\bf (1)} In view of the discussion in Section~\ref{subsec:br} the mixing conditions  of {\bf (W)} are satisfied.\\
{ \bf (3)} $f_{H_0}/f_H$ is non-constant.\\
{ \bf (4)} Due to the structure of $\gamma_H$, the infinite series
$\sum_\bfh|h_1h_2\gamma_H(\bfh)|$, $\sum_\bfh|\gamma_H'(\bfh)|$
and $\sum_\bfh |\gamma_H''(\bfh)|$ are finite and 
continuous \fct s of $H$. Hence their suprema over $H\in\bfTh$ are finite.\\
{ \bf (5)} This condition reads as $\var(f_{H_0}'(\bfU)/f_{H_0}(\bfU))>0$ 
which is satisfied because $f_H'(\bfU)/f_H(\bfU)$ is not constant.
\par
In the remainder of this section we will verify {\bf (2)}: the positivity of
the spectral density $f_H$ for all $H\in \bfTh$. Since $f_H(\bfomega)$ 
is continuous on $\Pi^2\times \bfTh$ the infimum of $f_H(\bfomega)$ 
over $\Pi^2\times \bfTh$ is positive.

\subsection*{Positivity of the spectral density}
The \fct\ $(\gamma_H(\bfh))_{\bfh\in\bbr^2}$ is non-negative definite, isotropic
(i.e., depends only on $\|\bfh\|$) 
 and for any multi-index $\alpha $, $\bfh ^{\alpha}\gamma _H(\bfh )$ is integrable and vanishes at infinity.  
Therefore $\gamma_H$ is the Fourier transform of a finite 
positive measure on $\bbr ^2$ which has a smooth Lebesgue density $g_H$, i.e., 
$\gamma_H=\wh g_H$ on $\bbr^2$. Indeed, direct calculation shows 
$$
D^{\alpha }\int _{\mathbb{R} ^2}\ex^{i\,\bfx ^\top \bfh }\,\gamma _H(\bfh)\, d\bfh
=i^{|\alpha |}\int _{\mathbb{R} ^2}\ex^{i\,\bfx ^\top\bfh } \bfh^{\alpha }\,\gamma _H(\bfh)\ d\bfh.$$
Note that  $g_H$ inherits isotropy from $\gamma_H$. 
\par
The following result is key to the proof of the positivity of $f_H$.
The proof is provided at the end of this section.
\bth\label{thm:aux} For each $H\in (0,1)$, there is $c>0$ such that 
\beam\label{eq:main}
g_H(\bfx)=c\,\| \bfx\| ^{-(2+H)}(1+o(1))\,,\qquad\|\bfx\|\to\infty\,.
\eeam
\ethe
By \eqref{eq:main} the following quantity is well defined:
\beam\label{eq:main1}
\wt g_H (\bfx)=\sum _{\bfh\in \bbz ^2}g_H(\bfx+2\pi \bfh)\,, \qquad\bfx\in \Pi^2\, ;
\eeam
see the calculations below.
For $\bfm\in \bbz ^2$ we have
\beao
\int _{\Pi^2}\ex^{-i\,\bfm^\top \bfx}\,\wt g_H(\bfx)\, d\bfx&=& 
 \sum _{\bfh\in \bbz ^2} \int _{\Pi^2}\ex^{-i\,\bfm^\top\bfx} g_H(\bfx+2\pi \bfh)\, d\bfx\\
&=& \sum _{\bfh\in \bbz ^2} \int _{\Pi^2+2\pi \bfh}\ex^{-i\,\bfm^\top (\bfx-2\pi \bfh)} \,g_H(\bfx)\, d\bfx\\
&=& \sum _{\bfh\in \bbz ^2} \int _{\Pi^2+2\pi \bfh}\ex^{-i\,\bfm^\top \bfx} \,g_H(\bfx)\, d\bfx\\
&=& \int _{\bbr^2}\ex^{-i\,\bfm^\top \bfx} g_H(\bfx)\, d\bfx= \wh g_H(\bfm)=\gamma_H (\bfm)\,.
\eeao
Therefore
\beao
\wt  g_H(\bfx)=\frac{1}{(2\pi) ^2}\,\sum _{\bfh\in \bbz ^2} \ex^{i\,\bfh^\top \bfx}\,
\gamma_H (\bfh)=\frac{1}{(2\pi) ^2}f_H(\bfx)\,.
\eeao
We choose  $r>\sqrt{2}$ and take $\bfh$ \st\ $\| \bfh\| >r$. 
An application of the triangle inequality for $\bfx\in\Pi^2$ 
ensures  that $\|\bfx+2\pi \bfh\| \geq (r-\sqrt{2})2\pi$. If $r$ 
is sufficiently large then in view of \eqref{eq:main} 
	we have for $\bfx\in\Pi^2$, 
$$
g_H(\bfx+2\pi \bfh)\geq \frac{c}{2\| \bfx+2\pi \bfh\|^{2+H}}\,.$$
Therefore 
\beao
\wt g_H (\bfx)
&\geq& 
\sum _{\bfh\in \bbz ^2, \| \bfh\| 
\geq r}\,g_H(\bfx+2\pi \bfh)  
\geq \sum _{\bfh\in \bbz ^2, \| \bfh\| \geq r}\frac{c}{2\|\bfx+ 2\pi \bfh\| ^{2+H}}\\
 &\geq& \frac{1}{(3\pi )^{2+H} }\,\sum _{\bfh\in \bbz ^2, \| \bfh\| >r}\frac{c}{2\|\bfh\| ^{2+H}}>0. 
\eeao
In the last step we used
\beao
\| \bfx+2\pi \bfh\| \leq \| \bfx\|+2\pi \| \bfh\| \leq 2\sqrt{2}\pi +2\pi \| \bfh\| \leq 3\pi \| \bfh\|.
\eeao
Finally, we prove that the series in \eqref{eq:main1} converges. By
\eqref{eq:main} there is $C>0$ 
\st  
\beao 
\sum _{\bfh\in \bbz ^2, \| \bfh\| > 4\sqrt{2}\pi }g_H(\bfx+2\pi \bfh)
&\leq& \sum _{\bfh\in \bbz ^2, \| \bfh\| >4\sqrt{2}\pi}\frac{C}{\|\bfx+ \bfh\| ^{2+H}}
\leq \sum _{\bfh\in \bbz ^2, \| \bfh\| >4\sqrt{2}\pi}\frac{2^{2+H}C}{\|\bfh\| ^{2+H}}\\. 
&\leq &4\sum _{\bfh\in \bbz ^2, h_1\geq 2\ \mbox{or}\  h_2\geq 2}\frac{2^{2+H}C}{\|\bfh\| ^{2+H}}\\
&\leq &4\sum _{\bfh\in \bbz ^2, h_1\geq 2, h_2\geq 2}\int _{[h_1-1,h_1]\times [h_2-1,h_2]}\frac{2^{2+H}C}{\|\bfy\| ^{2+H}}\, d\bfy+ 16\sum _{m\geq 2}\frac{C2^{2+H}}{m^{2+H}}.
\eeao
Moreover, 
\beao
\sum _{\bfh\in \bbz ^2, h_1\geq 2, h_2\geq 2}\int _{[h_1-1,h_1]\times [h_2-1,h_2]}\frac{2^{2+H}C}{\|\bfy\| ^{2+H}}\, d\bfy
&\leq& \int _{\| \bfy\| >1 }\frac{2^{2+H}C}{\|\bfy\| ^{2+H}}\, d\bfy\\
&=&C\, 2^{2+H}\,2\pi \int _{1}^{\infty} r^{ -1-H}\ dr <\infty. 
\eeao 
\subsection*{Proof of Theorem~\ref{thm:aux}}
In what follows, we need properties of the Fourier transform of 
Schwartz functions and distributions; they will be  quoted from 
Rudin \cite{rudin:1973}. The
	Fourier transform and its inverse 
of a \fct\ $\eta$ on $\bbr^2$ will be denoted by
\beao
	\mathcal{F}\eta (\bfh)=\int _{\bbr ^2}\eta (\bfx)\,\ex^{-i\bfx^\top \bfh}\ {\rm d}\bfx\quad\mbox{ and }\quad
	\mathcal{F}^{-1}\eta (\bfx)=\frac{1}{(2\pi )^2}
\int _{\bbr ^2}\eta(\bfh)\,\ex^{i\bfx^\top \bfh}\ {\rm d}\bfh\,,  
\eeao
respectively. We observe that $\gamma_H$ is a smooth function 
off the origin. We split it into two parts
	\beao
\gamma_H =\gamma _1+\gamma _2\,,\qquad \mbox{where}\qquad \gamma _1=\gamma \psi\,,
\eeao
and  $\psi \in C_c^{\infty}(\bbr ^2)$ is an isotropic bump \fct\ \st\
$\psi (\bfh)=1$ for $\| \bfh\|\leq 1$, ${\rm supp}\, \psi \subset \{\bfh: \| \bfh\| \leq 2\}$.
	Then 
\beao
g_H=g_1 +g_2\qquad\mbox{where} \qquad g_i=\mathcal{F}^{-1}\gamma _i\,,\qquad i=1,2\,.
\eeao
Since $\gamma_2$ is a Schwartz function so is $g_2$ (Theorem 7.4 in \cite{rudin:1973}) and therefore it suffices for \eqref{eq:main} to describe the \asy\ behavior of $g_1(\bfx)$ as $\|\bfx\|\to \infty$.
Since the density $\ex^{-x^2/2}/\sqrt{2\pi}$ of the standard 
normal distribution is real analytic and $\xi=2(1-\Phi )$ inherits this property, it can be written as a series $\xi(t)=\sum _{k=0}^{\infty}c_k\,t^k$
whose \con\ is uniform on compact sets. Therefore
\beao
\gamma _1(\bfh)=\Big(\sum _{k=0}^{m-1}+\sum _{k=m}^{\infty}\Big)c_k\,c^{k\slash 2}\,\| \bfh\| ^{H\,k}\,\psi (\bfh)=:I_m(\bfh)+R_m(\bfh), 
\eeao
Notice that $\| \bfh\| ^{kH}\psi (\bfh)$ has at least $\lfloor kH\slash 2\rfloor $ integrable derivatives. Hence
	 $R_m$ has $\lfloor mH\slash 2\rfloor $ derivatives and we choose $m$ such that $\lfloor mH\slash 2\rfloor> H+2$.
Then  $\mathcal{F}^{-1}R_m(\bfx)$ 
decays faster than $\| \bfx \| ^{-H+2}$. Indeed, by Theorem 7.4 
in \cite{rudin:1973}, for $p=\lfloor mH\slash 2\rfloor $, 
$j=1,2$ and $D_j^p=\frac{\partial ^p}{\partial x_j^p}$, we have
\beao
\mathcal{F} ^{-1} (R_m)(\bfx)=(-ix_j)^{-p}\mathcal{F} ^{-1} (D_j^pR_m)(\bfx)
\eeao
and
\beao
|\mathcal{F} ^{-1} (D_j^pR_m)(\bfx)|\leq \int _{\mathbb{R}^2}|D_j^pR_m)(\bfh)|d\bfh <\infty .
\eeao
Therefore
it remains to study $\mathcal F^{-1}I_m$ for each single term
	 $\| \bfh\| ^{Hk}\psi (\bfh)$ for $k=1,\ldots,m-1$; for $k=0$ we again have the Schwartz function $\psi$.
\par
For $\beta,A>0$ we introduce the quantities
\beao
F_{\beta}(\bfx)=\frac{1}{(2\pi )^2}\int _{\bbr ^2}\| \bfh\| ^{\beta }\psi (\bfh)
\,\ex^{i\bfx^\top \bfh}\ {\rm d}\bfh  \quad\mbox{and}\quad F_{\beta ,A}(\bfx)=
\frac{1}{(2\pi )^2}\int _{\bbr ^2}\| \bfh\| ^{\beta }\psi (\bfh/A) \,\ex^{i\bfx^\top \bfh}\, {\rm d}\bfh\,. 
\eeao
\ble\label{l1}
The following statements hold:
\begin{enumerate}
\item[\rm 1.]
For all $\bfx\neq \bf0$ the limit
 $\wt F _{\beta}(\bfx)=\lim _{A\to \infty}F_{\beta , A}(\bfx)$ exists.
\item[\rm 2.]
The limit
$\lim _{\|\bfx \|\to \infty}F_{\beta}(\bfx)\,\|\bfx\| ^{2+\beta}=c_{\beta}$
exists and $c_{\beta}\neq 0$ for $0<\beta \leq 1$. In particular,  
\beao
\lim _{\| \bfx\| \to \infty}\| \bfx\| ^{2+H }\frac{1}{(2\pi )^2}\int _{\bbr^2}
\ex^{i \bfx^\top \bfh} I_m(\bfh)\,{\rm d}\bfh=
\lim _{\| \bfx\| \to \infty} F_H(\bfx)\,\| \bfx\| ^{2+H }=c_{H}\ne 0.
\eeao
\end{enumerate}
\ele
\begin{proof} {\bf 2.}
Note that $F_{\beta}$ and $F_{\beta ,A}$ are isotropic.
We will show that {\bf 1.} implies {\bf 2.}.
First we show that $\wt F _{\beta }$ is homogeneous. Indeed, for $t>0$,
 \begin{align*}
\wt F _{\beta }(t\,\bfx)=&\lim _{A\to \infty}\frac{1}{(2\pi )^2}\int _{\bbr ^2}\| \bfh\| ^{\beta }\psi(\bfh/A)\,\ex^{i\,t\,\bfx^\top \bfh}\,{\rm  d}\bfh\\
=&\lim_{A\to\infty}\frac{1}{(2\pi )^2}\int _{\bbr ^2}\| \bfh/t\| ^{\beta }\psi(\bfh/(tA)) \,\ex^{i\bfx^\top \bfh}\,t^{-2}\ {\rm d}\bfh=t^{-\beta -2}\,\wt F _{\beta }(\bfx)
 \end{align*}
Writing $\bfx=\| \bfx\| \bfx_0$, we have
 $$
 \wt F _{\beta }(\bfx)=\wt F _{\beta }(\| \bfx\| \bfx_0)=\| \bfx\| ^{-\beta -2}\wt F_\beta(\bfx_0)$$
and since $\wt F_\beta(\bfx)$ is isotropic, $\wt F_\beta(\bfx_0)=:c_\beta$ does not not depend on $\bfx_0$. Fix a unit vector $\wt \bfx _0$. Then 
$F_{\beta }(\bfx)=	F_{\beta }(\| \bfx\| \bfx _0)=F_{\beta }(\| \bfx \| \wt \bfx _0)$ and 
\beao
 \lim _{\| \bfx\|\to \infty} F_{\beta}(\bfx)\| \bfx\| ^{2+\beta }
&=&\lim _{\| \bfx\| \to \infty}\| \bfx\| ^{2+\beta }\frac{1}{(2\pi )^2}\int _{\bbr ^2}\| \bfh\| ^{\beta }\psi (\bfh)\,\ex^{i\|\bfx\|\wt \bfx_0^\top \bfh}\ {\rm d}\bfh \\
 &=&\lim _{\| \bfx\| \to \infty}\| \bfx\| ^{2+\beta }\frac{1}{(2\pi )^2}\int _{\bbr ^2}\| \bfu\| ^{\beta }\psi(\bfu/\| \bfx\|) \ex^{i\wt \bfx_0^\top \bfu}\| \bfx\| ^{-2-\beta }\ {\rm d}\bfu\\
& =&\lim _{\| \bfx\| \to \infty}\frac{1}{(2\pi )^2}\int _{\bbr ^2}\| \bfu\| ^{\beta }\psi(\bfu/\| \bfx\|)\,\ex^{i\wt \bfx_0^\top \bfu}{\rm  d}\bfu =c_{\beta }.
\eeao 
It remains to prove that $c_\beta\neq 0$ for $\beta \leq 1$.  
The functions $\Psi _A(\bfh)=\| \bfh\| ^\beta\,\psi (\bfh/A)$ 
converge to $\| \bfh\| ^\beta$ in the sense of distributions because for a test function $\varphi \in C_c^{\infty}(\bbr ^2)$
\beao 
\lim _{A\to \infty}\int _{\bbr ^2}\| \bfh\| ^\beta\,\psi (\bfh/A) \,
\varphi (\bfh)\, {\rm d}\bfh =\int _{\bbr ^2}\| \bfh\| ^\beta \,\varphi 
(\bfh)\,{\rm  d}\bfh\, .
\eeao
Here we have in mind tempered distributions and the Fourier transform defined on them.
Therefore $\mathcal{F}^{-1}\Psi _A(\bfh)$ tends to a non-zero 
$\mathcal{F}^{-1}(\| \bfh\|^\beta)$ in the sense of distributions; 
see Theorem 7.15 in \cite{rudin:1973}. Now suppose that $c_\beta=0$. 
Then for a test function $\varphi $ such that ${\bf0}\notin {\rm supp} \,\varphi $,
\beao
\langle \mathcal{F}^{-1}\Psi _A, \varphi \rangle= 
\int _{\bbr ^2}\frac{1}{(2\pi )^2}\int _{\bbr ^2}\ex^{i\bfx^\top \bfh}\Psi _A (\bfh) \varphi (\bfx)\ {\rm d}\bfh \,{\rm d}\bfx\to \int _{\bbr ^2} \wt F_\beta(\bfx)\,\varphi (\bfx)\,{\rm  d}\bfx =0.
\eeao
Therefore, the support of $\mathcal{F}^{-1}(\| \bfh\|^\beta)$ 
must be contained in $\{ \bf0\}$ and so  $\mathcal{F}^{-1}(\| \bfh\|^\beta)$ 
is a differential operator; see Theorem 6.25 in \cite{rudin:1973}. 
However the latter 
is not possible because $\| \bfh\| ^\beta$ is not a polynomial. Indeed,
the Fourier transform of a differential operator 
is a polynomial. This follows from the definition 
of the Fourier transform and Theorem 7.15 in \cite{rudin:1973}.\\[2mm]
{\bf 1.} For $A>1$
choose an integer $m$ such that $2^m\leq A< 2^{m+1}$. Then
\beao
\psi \left (\frac{\bfh}{A}\right )&=&\psi \left (\frac{\bfh}{A}\right )-\psi \left (\frac{\bfh}{2^m}\right )
+\sum _{s=1}^m\left (\psi \left (\frac{\bfh}{2^s}\right )-\psi \left (\frac{\bfh}{2^{s-1}}\right )\right )+\psi (\bfh)\,.
\eeao
Thus
\beao\lefteqn{
\int _{\bbr ^2}\ex^{i\bfx^\top \bfh}\,\| \bfh\| ^{\beta}\,\psi \left (\frac{\bfh}{A}\right )\ {\rm d}\bfh}\\&=&\underbrace{\int _{\bbr ^2}\ex^{i\bfx^\top \bfh}\| \bfh\| ^{\beta}\left (\psi \left (\frac{\bfh}{A}\right)-\psi \left (\frac{\bfh}{2^m}\right )\right )\ {\rm d}\bfh}_{=:I_0(\bfx)}\\
&&+\sum _{s=1}^m\underbrace{\int _{\bbr ^2}\ex^{i\bfx^\top \bfh}\| \bfh\| ^{\beta}\left (\psi \left (\frac{\bfh}{2^s}\right)-\psi \left (\frac{\bfh}{2^{s-1}}\right )\right )\ {\rm d}\bfh}_{=: I_s(\bfx) \,(2\pi)^2} +\int _{\bbr ^2}\ex^{i\bfx^\top \bfh }\| \bfh\| ^{\beta}\psi (\bfh)\, {\rm d}\bfh.
\eeao
First we prove that the sum converges as $m\to \infty$. 
Changing variables, $\bfh=2^{s-1}\bfu$, 
we have
\beao
I_s(\bfx)&=& \frac{1}{(2\pi )^2}\int _{\bbr ^2}\ex^{i\, 2^{s-1}\,
\bfx^\top\bfu}\,2^{(s-1)\beta } \| \bfu\| ^{\beta}\left (\psi \left (\frac{\bfu}{2}\right )-\psi (\bfu)\right )\,2^{2(s-1)}\ {\rm d}\bfu\\
&=&\mathcal{F}^{-1}\rho (2^{s-1}\bfx)\,2^{(s-1)(\beta +2)}, 
\eeao
where
$\rho (\bfu)=\| \bfu\| ^{\beta}\,(\psi(\bfu/2)-\psi (\bfu))$
is a smooth function supported on $\{\bfu\in \bbr ^2: \|\bfu\| \leq 4\}$. 
Indeed, $\psi (\bfu/2)-\psi (\bfu)=0$ if $\| \bfu\| \leq 1$ 
which cuts out the singularity of $\| \bfu\| ^{\beta}$. 
Hence $\mathcal{F}^{-1}\rho$ is a Schwartz function, and so for all $M$ and $\bfx\neq \bf0$,
$|\mathcal{F}^{-1}\rho (\bfx) |\leq C_M \| \bfx\|^{-M}.$
Therefore, for $\bfx\neq \bf0$
\beao
|I_s(\bfx)|&\leq 2^{(s-1)(\beta +2)}|\mathcal{F}^{-1}\rho (2^{s-1}\bfx) |
\leq 2^{(s-1)(\beta +2)}\,C_M\,\| 2^{s-1}\bfx\|^{-M}
\leq 2^{(s-1)(\beta +2)} 2^{-(s-1)M}\| \bfx\|^{-M}.
\eeao
This proves that $\sum _sI_s(\bfx)$ converges for $\bfx\neq 0$ 
(we choose $M>\beta +2$).
\par
Now we consider $I_0(\bfx)$. 
For $A>0$ define 
$$
\eta _A(\bfh)=\| \bfh\| ^{\beta}\,\left(\psi (\bfh)-\psi \left (\frac{A\bfu}{2^m}\right )\right).$$
Changing variables $\bfh=A\,\bfu$ we have
$$I_0(\bfx)=
A^{\beta +2}\,\int _{\bbr ^2}\ex^{iA\,\bfx^\top \bfu}\,\eta _A(\bfu)\, {\rm d}\bfu.$$
We will prove that the \rhs\ vanishes as $A\to \infty$. Notice that 
$\eta _A(\bfu)=0$ if $\| \bfu\| \leq \frac{1}{2} \leq 
\frac{2^m}{A}$, or 
$\| \bfu\| >2\geq \frac{2^{m+1}}{A}$ since $2^m\leq A<2^{m+1}$. Hence
$\eta _A\in C_c^{\infty}(\bbr ^2)$ and for all $k\in \bbn$, 
$j=1,2$, 
$\sup _{A}|D_j^{k}\eta _A(\bfu)|<\infty,$ where
$D_j^{k}:=\partial ^k/\partial x_j^k$.
Indeed, 
$$
|D_j^{k}\eta _A(\bfu)|=\left | \sum _{p=0}^k \binom {k}{p} D_j^{k-p } (\| \bfu\| ^{\beta})\,D^{p}\eta _A(\bfu)\right |\leq C(k)\left (\frac{A}{2^m}\right )^k.$$
Then there is a constant $\wt C(k)$ such that for $\bfx\neq \bf0$,
\begin{equation}\label{decay1}
\left |\int _{\bbr ^2}\ex^{i\bfx^\top \bfu }\,\eta _A(\bfu)\,{\rm  d}\bfu\right |\leq \wt C(k)\|\bfx\|^{-k}\left (\frac{A}{2^m}\right )^k.
\end{equation}
This inequality follows directly from the following property of the Fourier transform 
$$
\mathcal{F} ^{-1} (\eta _A)(x)=(-ix_j)^{-k}\mathcal{F} ^{-1} (D_j^k\eta _A)(x),$$
see Theorems 7.4 and 7.5 in \cite{rudin:1973}.
Hence for $\bfx\neq \bf0$,
\beao
|I_0(\bfx)|
&\leq& \wt C(k)\,A^{-k}\|\bfx\|^{-k}\left (\frac{A}{2^m}\right )^kA^{2+\beta}\\
&\leq& \wt C(k)\|\bfx\|^{-k}\left (\frac{1}{2^m}\right )^k2^{(2+\beta)(m+1)}\\
&\leq& \wt C(k)\|\bfx\|^{-k}2^{(2+\beta)(m+1)-mk}\to 0\,,
\eeao
as $A\to \infty$, i.e., $m\to \infty$ provided $k> 2+\beta $.

\end{proof}

\section{A small simulation study} \label{sec:simu}  \setcounter{equation}{0}
\subsection{MMA random field}
We start with an MMA random field (see Section~\ref{subsec:mma}) with 
weight \fct\ 
\begin{equation}
\label{eq:6}
w(\bfs)=\phi^{|s_1|+|s_2|}\,\1(|s_1|+|s_2|\le 5)\,,
\end{equation}
where $\phi>0$, and $Z_{\bfs}$ have a unit Fr\'echet 
\ds . The random field $(X_\bft)$ is visualized in 
Figure~\ref{fig:1} on $\Lambda_{50}^2$. 
If the noise $Z_{\bft}$ is large and $\phi<
1$ the values $w(\bfs)Z_{\bft-\bfs}$ 
decrease quickly in the neighborhood of $\bft$. For $\phi\ge 1$ we observe the opposite effect.
Thus the local extremal dependence
of $(X_{\bft})$ is very weak for $\phi<1$ and very strong for
$\phi>1$.
In Figure~\ref{fig:1} we show sample paths of the MMA field 
for $\phi=0.5,1.0,1.5$ and in Figure~\ref{fig:2} boxplots
of Whittle estimation based on 50 replications. We
  choose $m_n=20$ \st\ $1-1/m_n=0.95$ and select 
$a_m$ as the $95\%$-quantile of the sample
  $(X_{\bft})_{\bft \in \Lambda_{50}^2}$.

\begin{figure}[htbp]
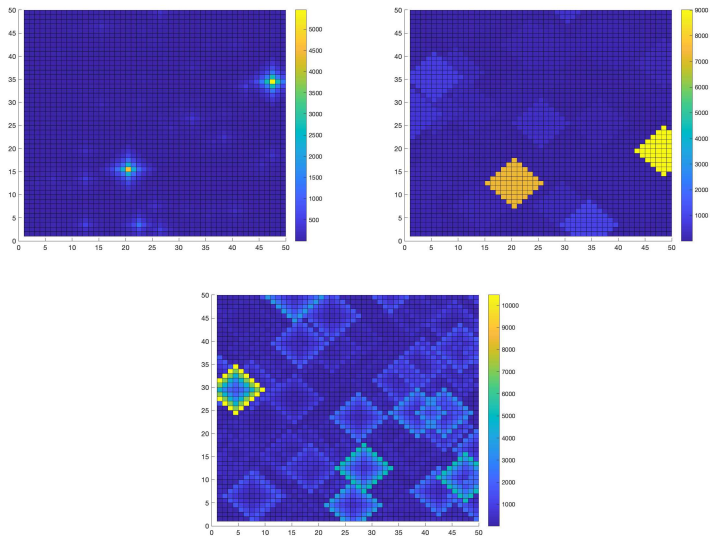

\centering
\begin{minipage}[t]{0.33\linewidth}
\includegraphics[width=\linewidth]{mma5phi05.eps}
\end{minipage}
\begin{minipage}[t]{0.33\linewidth}
\includegraphics[width=\linewidth]{mma5phi10.eps}
\end{minipage}
\begin{minipage}[t]{0.33\linewidth}
\includegraphics[width=\linewidth]{mma5phi15.eps}
\end{minipage}
\caption{\label{fig:1} Simulated sample paths of the MMA
  random field on $\Lambda_{50}^2$ with weight \fct\ \eqref{eq:6}, 
  $\phi =0.5$ (top left), $\phi=1.0$ (top right)  and $\phi=1.5$ (bottom).
  }
\end{figure}

\begin{figure}[htbp]
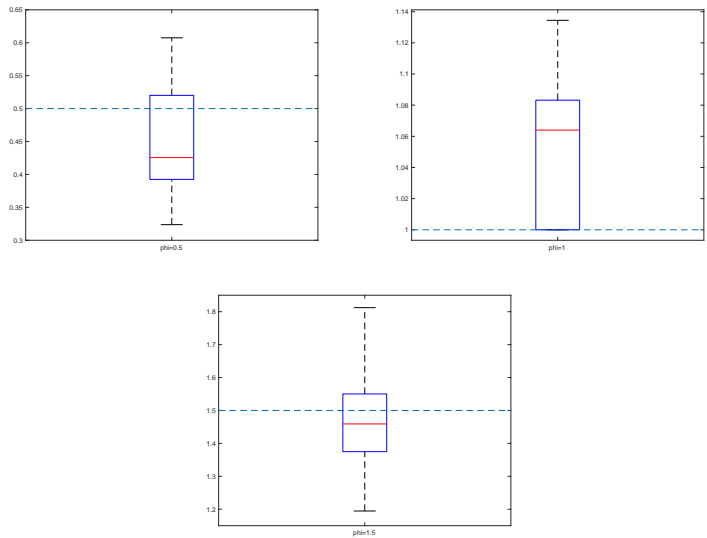

\centering
\begin{minipage}[t]{0.33\linewidth}
\includegraphics[width=\linewidth]{mma5phi05m20box.eps}
\end{minipage}
\begin{minipage}[t]{0.33\linewidth}
\includegraphics[width=\linewidth]{mma5phi10m20box.eps}
\end{minipage}
\begin{minipage}[t]{0.33\linewidth}
\includegraphics[width=\linewidth]{mma5phi15m20box.eps}
\end{minipage}
\caption{\label{fig:2} Box-plot (50 replications) 
for MMA Whittle estimation  on $\Lambda_{50}^2$ 
of the parameter in model \eqref{eq:6}.
$\phi =0.5$ (top left), $\phi=1.0$ (top right)  and $\phi=1.5$ (bottom).
$a_m$ represents the 95\% empirical quantile.}
\end{figure}

\subsection{The Brown-Resnick random field}
In Figure~\ref{fig:brrf} we present a simulation of the Brown-Resnick random
field for $H=0.5$ on  $ \Lambda_{50}^2$. The simulation of these fields 
is complex; see the discussion in the recent overview paper 
Oesting and Strokorb \cite{oesting:strokorb:2021}. For the results in Figure~\ref{fig:brrf} 
and Table~\ref{tab:brrf} we use the algorithm
from Liu et al.  \cite{liu:blanchet:dieker:mikosch:2016} which leads to
a perfect simulation of the field. 
For the field on $\Lambda_{50}^2$ we illustrate the performance of
the Whittle estimator in boxplots based on 50 replications
for $m_{n} = 5, 10$ corresponding to the 80\%- and 90\%-quantiles $a_m$ of the 
marginal \ds\ of the field. For these 50 sample paths
we  also calculated the pairwise 
  composite likelihood estimator used in Davis et al. 
  \cite{davis:kluppelberg:steinkohl:2013}
 and showed the results in a boxplot.
Table~\ref{tab:brrf} provides the
 mean, median, standard deviation based on 50 replications for the Whittle
  and pairwise 
  composite likelihood estimators.
The pairwise
  likelihood estimator has a much smaller variance than the Whittle estimator.
On the other hand, pairwise likelihood estimation is very 
biased and costs much more time. 
\par
Figure~\ref{fig:brrf05}  shows a simulation of a truncated 
Brown-Resnick random field for $H=0.5$ based on the naive approximation
\begin{eqnarray}
  \label{eq:appbrf}
  X_{\bfs} = \sup_{1\le j \le 1000} \Gamma_j^{-1} \ex^{W_{\bfs}^{(j)} -
  \delta (\bfs)}\,, \quad \bfs \in \Lambda_{20}^2\,,
\end{eqnarray}
where $(\Gamma_j)$ is defined in \eqref{eq:brproc} which is
independent of the sequence of iid standard Brownian sheets
$(W_{\bfs}^{(j)})_{\bfs \in \Lambda_{20}^2}$. The performance of the
Whittle estimator is illustrated in boxplots based on $50$
replications for $m_n=3,5$ corresponding to the $67\%$- and
$80\%$-quantiles $a_m$ of the marginal distribution of the
field. Perhaps surprisingly, 
according to Table~\ref{tab:brrf05}, the choice
of $m_n=3$ provides better estimation results with smaller bias and variance.
For the same sample paths of the field we calculated the 
pairwise composite likelihood estimator. The boxplot in 
Figure~\ref{fig:brrf05} but also the results about  mean, median, standard
deviation in Table~\ref{tab:brrf05}
indicate that the Whittle estimator outperforms pairwise likelihood; the Whittle
procedure leads to estimators with smaller bias and variance.  
\par
Tables~\ref{tab:brrf} and \ref{tab:brrf05} show that
  the  Whittle estimator can be calculated at a much faster average speed than
the pairwise likelihood estimator. Moreover, the median  
of Whittle estimation is much closer to the true value $H=0.5$ than
for pairwise likelihood, showing that the former estimator is less biased.
\par
A delicate problem is the choice of a high quantile $a_m$.
In our simulations we calculated it as the corresponding empirical $(1-1/m)$-quantile. Then we determined the spatial \per\ based on 
the indicator \fct s $\1(|X_{\bfs}|>a_m)$, $\bfs\in \Lambda_n^2$, and
obtained the corresponding Whittle estimator by solving the optimization problem
   \eqref{eq:jan31a}. Throughout  the paper we use the \fct\ {\em
     fminbnd} in Matlab 2020a  for  
   optimization problems, also for pairwise likelihood.

\begin{figure}[htbp]
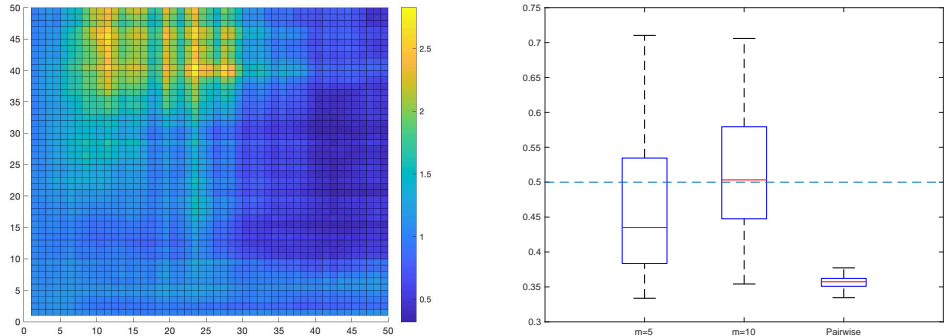

\begin{minipage}[t]{0.45\linewidth}
\centering
\includegraphics[width=\linewidth]{brrfh05.eps}
\end{minipage}%
\begin{minipage}[t]{0.45\linewidth}
\centering
\includegraphics[width=\linewidth]{brrfh05box.eps}
\end{minipage}
%
\caption{Left:  A sample path of the Brown-Resnick random field
on $ \Lambda_{50}^2$ with $H=0.5$. The simulation is based on 
the algorithm from Liu et al. \cite{liu:blanchet:dieker:mikosch:2016}.
Right: Boxplots based on 50 replications of the Whittle estimator with
$m_n=5,10$ and pairwise likelihood estimator; see \cite{davis:kluppelberg:steinkohl:2013} for details.}\label{fig:brrf}
\end{figure}

\begin{table}[h!]
  \centering
  \begin{tabular}{|c|| c | c | c | c |}
    \hline
    Estimator & Mean & Median & Standard Deviation & Time cost (seconds) \\
    \hline
    \hline
    Whittle ($m=5$) & 0.49 & 0.43 & 0.15 & 319\\
    \hline
    Whittle ($m=10$) & 0.55 & 0.50 & 0.16 & 323 \\
    \hline
    Pairwise  & 0.36 & 0.36 & 0.01 & 3205\\
    \hline
  \end{tabular}
  \caption{Mean, median, standard deviation and average time cost in seconds for the Whittle
    estimator (first and second row) and the pairwise composite
    likelihood estimator (third row). The estimation (50 replications) was conducted 
  on a MacBook Pro (16-inch, 2019), CPU Intel Core i9 (2.4GHz) with
  Matlab 2020a. }
  \label{tab:brrf}
\end{table}
\begin{figure}[htbp]
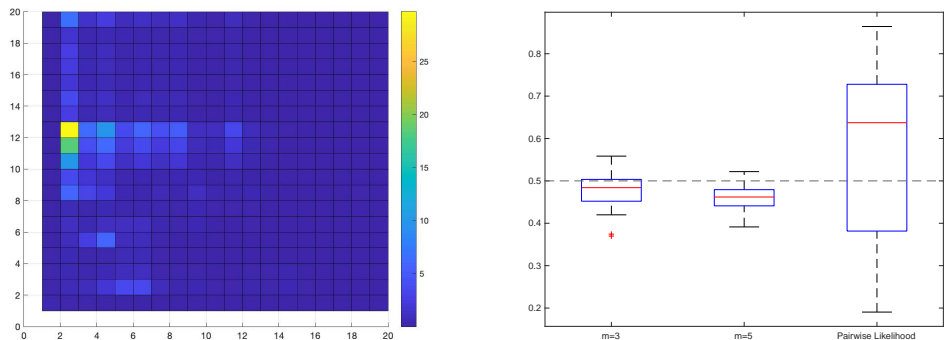

\begin{minipage}[t]{0.45\linewidth}
\centering
\includegraphics[width=\linewidth]{brownresnickn20.eps}
\end{minipage}%
\begin{minipage}[t]{0.45\linewidth}
\centering
\includegraphics[width=\linewidth]{brownresnickh05n20m3m5.eps}
\end{minipage}%
\caption{\label{fig:brrf05} Left: 
A sample path of the approximation \eqref{eq:appbrf}
to the Brown-Resnick random field 
on $ \Lambda_{20}^2$ with $ H=0.5$. 
Right: Boxplot based on 50 replications for Whittle estimator with $m_n=3,5$ and pairwise likelihood estimator; see 
\cite{davis:kluppelberg:steinkohl:2013} for details. } 
\end{figure}

\begin{table}[h!]
  \centering
  \begin{tabular}{|c|| c | c | c | c |}
    \hline
    Estimator & Mean & Median & Standard Deviation & Time cost (seconds) \\
    \hline
    \hline
    Whittle ($m=3$) & 0.47 & 0.48 & 0.037  & 146 \\
    \hline
    Whittle ($m=5$) & 0.46 & 0.46 & 0.027  & 160 \\
    \hline  
    Pairwise & 0.57 & 0.63 & 0.19 & 7946 \\
    \hline
  \end{tabular}
  \caption{Mean, median, standard deviation and average time cost in seconds for  Whittle
    estimator (first and second row) and pairwise composite
    likelihood estimator (third row). The estimation (50 replications) 
was conducted  on a 
  MacBook Pro (16-inch, 2019), CPU Intel Core i9 (2.4GHz) with Matlab 2020a.}
  \label{tab:brrf05}
\end{table}
\subsection{Comments}
Several problems  are not discussed in detail here.
One is the choice of the high quantile $a_m$ when dealing with real-life data.
Davis et al. \cite{davis:mikosch:2009,davis:mikosch:cribben:2012}
recommend a graphical approach by plotting the sample extremogram 
for various high empirical quantiles of the data at different lags.
If the sample extremogram collapses into zero already at small lags 
this is an indication of the fact that $a_m$ has been chosen too high.
Often 95\%--97\% quantiles of the data yield good results provided
the sample size is sufficiently large. In our simulation study
we did not choose very high quantiles and still got reasonable results. 
This may be due to the fact that the tails of the considered processes
are almost exact power laws even for small arguments.
\par
In the literature, estimation for Brown-Resnick processes/fields is often
conducted for processes with a spatio-temporal structure.
An extension of our results to these processes/fields is not straightforward
since we need positivity of the spectral density on the parameter space. 
For example, 
Theorem~\ref{thm:aux} yields the positivity of the  
spectral density $f_H$ of the Brown-Resnick process; the proof heavily depends
on the isotropy of the process.
\par
Another reason why we find it difficult to compare our results
with the literature on estimation of the Brown-Resnick process/field
is the use of distinct simulation procedures. Often the naive approximation \eqref{eq:appbrf} is employed, or the simulation technique is not explicitly mentioned. Oesting and Strokorb 
\cite{oesting:strokorb:2021} give an overview on simulation techniques
for max-stable processes and point at various problems in this context, in 
particular when using \eqref{eq:appbrf}.
\par
One of the tasks to be solved in the future is to find
confidence bands tailored for Whittle estimation. Motivated by
results in Davis et al. \cite{davis:mikosch:cribben:2012,davis:drees:segers:warchol:2018} for the estimation of the extremogram we expect that stationary and 
multiplier block bootstrap techniques may be suitable in this context.

\section{Proof of
  Theorem~\ref{thm:cltintper}}\label{sec:cltintper}\setcounter{equation}{0}
Write
\beao
\psi_{\bfh}^{(n)} = \frac{(2\pi)^2}{n^2} \sum_{\bfj\in\Lambda_n^2} \cos
  (\bfh^{\top}\bfla_{ \bfj}) \,g(\bfla_{ \bfj})\quad \mbox{ and }\quad
\psi_\bfh=  \int_{\Pi^2} \cos (\bfh^{\top } \bfomega) g(\bfomega) \dif \bfomega\,.
\eeao
Then we have 
\beao
\dfrac{(2\pi)^2}{n^2}
\sum_{\bfj\in\Lambda_n^2} (\wh f(\bfla_\bfj)-f(\bfla_\bfj))\,g(\bfla_\bfj)
&=& \sum_{\|\bfh\|< n} \psi_\bfh ^{(n)}(\wh \gamma(\bfh)
-\gamma(\bfh)) 
\eeao
\par
Since $g$ is continuous on $\Pi^2$ we have for fixed $\bfh$,
$\psi_\bfh^{(n)}\to \psi_\bfh$ and $\sup_{\bfh}|\psi_\bfh^{(n)}|<\infty$. 
Then by virtue of Theorem~\ref{thm:clt},
\beao
\frac{n}{\sqrt{m_n}}\sum_{\bfh:\| \bfh \|\le h } \psi_\bfh^{(n)} (\wh{\gamma}
(\bfh) - \gamma(\bfh)) \std \sum_{\bfh:\|\bfh \|\le h} \psi_\bfh\,Z_\bfh\,,\qquad \nto\,.
\eeao
We will show that for all $\vep>0$,
\beam\label{eq:april13a}
  \lim_{h\to \infty} \limsup_{n\to \infty} \P \Big(
  \frac{n}{\sqrt{m_n}} \Big|\sum_{h<\|\bfh\|<n} \psi_{\bfh}^{(n)}
  (\wh{\gamma}(\bfh ) - \gamma(\bfh)) \Big| >\varepsilon
  \Big) = 0\,.
  \eeam
Then, by Theorem~2 in Dehling et al. \cite{dehling:durieu:volny:2009},
it follows that 
\beao
\dfrac{n}{\sqrt{m_n}}\,\dfrac{(2\pi)^2}{n^2}\sum_{\bfj\in\Lambda_n^2} (\wh f(\bfla_\bfj)-f(\bfla_\bfj))\,g(\bfla_\bfj)\std  \sum_{\bfh\in\bbz^2} \psi_\bfh\,Z_\bfh\,,\qquad \nto\,,
\eeao
and the limit is a genuine real-valued \rv .
\par
It remains to prove the following result.
\ble\label{lem:april13b} Under the conditions of the theorem we have
\beao
  \lim_{h\to \infty} \limsup_{n\to \infty} \dfrac{n}{\sqrt{m_n}} \Big| \sum_{h <\| \bfh \| <n} \psi_{\bfh}^{(n)}(\E[\wh{\gamma}(\bfh )] - \gamma(\bfh)) \Big|
  &=& 0\,.
\eeao
\ele
\begin{proof}[Proof of Lemma~\ref{lem:april13b}]
By Lemma~\ref{lem:psih}   we have $|\psi_{\bfh}^{(n)}-\psi_\bfh|\le cn^{-1}$
for $h_1,h_2\le n/2$.
Hence
 \beao
\lefteqn{\dfrac{n}{\sqrt{m_n}}\, \Big| \sum_{h <\| \bfh \|
  \le r_n} \psi_{\bfh}^{(n)}\,(\E[\wh{\gamma}(\bfh )] - \gamma(\bfh)) \Big|}\\
  &\le &\dfrac{n}{\sqrt{m_n}}\, \Big| \sum_{h <\| \bfh \|
  \le r_n} (\psi_{\bfh}^{(n)}-\psi_\bfh)\,(\E[\wh{\gamma}(\bfh )] - \gamma(\bfh)) \Big|
+\dfrac{n}{\sqrt{m_n}}\, \Big| \sum_{h <\| \bfh \|
  \le r_n} \psi_\bfh \,(\E[\wh{\gamma}(\bfh )] - \gamma(\bfh)) \Big|\\
&\le &\dfrac{r_n^2}{\sqrt{m_n}}\, c\, 
 \sup_{h<\|\bfh\|\le r_n}|\E[\wh{\gamma}(\bfh )] - \gamma(\bfh)|
+\sum_{h <\| \bfh \|
  \le r_n} |\psi_\bfh| \,\dfrac{n}{\sqrt{m_n}}\sup_{h<\|\bfh\|\le r_n}\big|\E[\wh{\gamma}(\bfh )] -\gamma(\bfh)\big|\,.
\eeao
The \rhs\ vanishes as $\nto$ 
in view of {\bf (M2)} and the absolute summability of $(\psi_\bfh)$.
Next we consider 
\beao
\lefteqn{\dfrac{n}{\sqrt{m_n}}\, \Big| \sum_{r_n<\| \bfh \|
  <n} \psi_{\bfh}^{(n)}\,(\E[\wh{\gamma}(\bfh )] - \gamma(\bfh)) \Big|}\nonumber\\
&\le & \dfrac{n}{\sqrt{m_n}}\,c\, \sum_{r_n<\| \bfh\|
  <n}
|\psi_{\bfh}^{(n)}|\,\big(m_n\,|\P(|X_{\bf0}|>a_m\,,|X_\bfh|>a_m)-(\P(|X|>a_m))^2|
+\gamma(\bfh)\big)\\
& \le & n\, m_n^{1/2}\, c\,\sum_{r_n<\| \bfh\|
  <n}\alpha(\|\bfh\|) + 
\dfrac{n}{\sqrt{m_n}} c\,\sum_{r_n<\|\bfh\|<n} \gamma(\bfh)\\
&\le & n\,m_n^{1/2}\, c\,\alpha(r_n) + 
\dfrac{n}{\sqrt{m_n}} c\,\sum_{r_n<\|\bfh\|<n} \gamma(\bfh)\to 0\,,\qquad \nto\,.
\eeao 
In the last step we used condition \eqref{eq:newlabel}. This finishes  the proof.
\end{proof}
By virtue of Lemma~\ref{lem:april13b} we are allowed to replace 
all $\gamma(\bfh)$ in \eqref{eq:april13a} by the corresponding 
expectations $\E[\wh \gamma(\bfh)]$. Therefore we will show next that
the following quantities are \asy ally negligible for all fixed $\vep>0$:

\begin{eqnarray*}
Q_1&= &\P \Big(
     \frac{n}{\sqrt{m_n}} \Big| \sum_{\bfh:h < \|\bfh \| \le 3r_n}
     \psi_{\bfh}^{(n)}(\wh{\gamma} (\bfh) - \E [
     \wh{\gamma} (\bfh)]) \Big| >\varepsilon\Big)\,,\\
 Q_2&= &\P \Big(
     \frac{n}{\sqrt{m_n}} \Big| \sum_{\bfh:3 r_n < \|\bfh \| \le n}
     \psi_{\bfh}^{(n)}(\wh{\gamma} (\bfh) - \E [
     \wh{\gamma} (\bfh)]) \Big| >\varepsilon\Big)\,.\\
\end{eqnarray*}
By Markov's inequality and Lemma~\ref{lem:psih},3., also using the 
fact that $r_n\le n/2$, we have
\beao
Q_1&\le & \dfrac{2\,n}{\sqrt{m_n}} \sum_{\bfh:h<\|\bfh\|\le 3r_n} |\psi_\bfh^{(n)}|
\,\E[|\wh\gamma(\bfh)|]\\
&\le &c \,\sqrt{m_n} \,\sum_{\bfh:h<\|\bfh\|\le 3r_n} \big(\P(|X_{\bf0}|>a_m\,,|X_\bfh|>a_m)+(\P(|X|>a_m))^2\big)\\
&= &c \,\sqrt{m_n} \,\sum_{\bfh:h<\|\bfh\|\le 3r_n} \P(|X_{\bf0}|>a_m\,,|X_\bfh|>a_m)
+O(r_n^2m_n^{-3/2})\,.
\eeao
The \rhs\ vanishes by condition {\bf (M1)}, first letting $\nto$, then $h\to \infty$.

The negligibility of $Q_2$ is proved in the following lemma. Its proof is given in 
Appendix~\ref{sec:covv}.
\ble\label{lem:q2neg}
  Under the conditions of the theorem we have $\lim_{n\to \infty} Q_2 =0$.
\ele
This completes the proof.

\section{Proof of Theorem~\ref{thm:cltwhittle}}\setcounter{equation}{0}
We start with several auxiliary results.
\ble\label{lem:123}
Assume {\bf (W)}. Then the following statements hold:
\begin{enumerate}
\item[\rm 1.] For all $\Theta \in \bfTh$,  
$\Theta\ne \Theta_0$,
\begin{eqnarray}\label{eq:noone}
  \dfrac {1}{(2\pi)^2} \dfrac{\ov\sigma^2 (\Theta)}{\ov\sigma^2 (\Theta_0)}
\,\int_{\Pi^2} \frac{f_{\Theta_0}
  (\bfomega)}{ f_{\Theta}
  (\bfomega)} \dif \bfomega > 1\,,\mbox{ where  } \overline{\sigma}^2 (\Theta)= \exp\Big(\dfrac 1{(2\pi)^2}\int_{\Pi^2} \log f_\Theta(\bfomega)\,d\bfomega\Big)\,.
\end{eqnarray}

\item[\rm 2.] We have
\beao
 \sup_{\Theta\in\bfTh}\Big|\sigma_n^2 (\Theta) - 
\dfrac{\ov\sigma^2(\Theta)}{(2\pi)^2}\int_{\Pi^2} \dfrac{f_{\Theta_0}
(\bfomega)}{f_{\Theta}(\bfomega)}\,d\bfomega\Big|\to 0\,,\qquad \nto\,.
\eeao
\item[\rm 3.]
We have $\Theta_n\stp \Theta_0$ and 
$\sigma_n^2 (\Theta_n) \stp\overline{\sigma}^2 (\Theta_0)$.
\end{enumerate}
\ele
The proof is given at the end of this section.
\par
Part 1. says that the left-hand side of \eqref{eq:noone} is bounded away from 1.  
This fact is essentially responsible for the {\em uniqueness}
of the Whittle estimator.
Part 2. is key to showing the {\em \asy\ unbiasedness}
of the Whittle estimator.
Part 3. yields the {\em consistency} of the Whittle estimator.
\begin{proof}[Proof of Theorem~\ref{thm:cltwhittle}] 
Let 
\beam\label{eq:feb5a}
F_n(\bfomega, \Theta) = \dfrac{f_{\Theta } (\bfomega)}{\overline{\sigma}_n^2 ( \Theta)}\quad\mbox{and}\quad F(\bfomega, \Theta) =
\dfrac{f_{\Theta} (\bfomega)}{ \overline{\sigma}^2 (\Theta)}\,,\qquad  (\bfomega,
\Theta)\in \Pi^2 \times \bfTh\,.
\eeam 
By Kolmogorov's formula (see Brockwell and 
Davis \cite{brockwell:davis:1991}, Theorem~5.8.1),
\begin{eqnarray} \label{eq:oo}
  \sum_{\bfj \in\Lambda_n^2} \log F_n (\bfla_{\bfj}, \Theta) = \int_{\Pi^2}
  \log F(\bfomega, \Theta) \dif \bfomega
  =0\,, \quad \Theta \in \bfTh\,.
\end{eqnarray}
We further study the expressions of $\partial F_n(\bfomega,
\Theta)/\partial \theta_{s_1}$ and $\partial^2 F_n(\bfomega,
\Theta)/( \partial \theta_{s_1} \partial \theta_{s_2} )$ for $
s_i\in \{ 1, \ldots, s\}$. 
\begin{eqnarray*}
  \frac{\partial F_n (\bfomega, \Theta)}{\partial \theta_{s_1}}
  &= &  \dfrac1{\overline{\sigma}_n^2 (\Theta)} \frac{\partial
       f_{\Theta}(\bfomega)}{ \partial \theta_{s_1}} -
       \frac{f_{\Theta}(\bfomega )}{(\overline{\sigma}^2
       (\Theta))^2} \frac{\partial \overline{\sigma}_n^2
       (\Theta)}{\partial \theta_{s_1}}\\ 
 & = & \frac{1}{\overline{\sigma}_n^2 (\Theta)} \frac{\partial
       f_{\Theta} (\bfomega)}{ \partial \theta_{s_1}} - \frac{
       f_{\Theta}(\bfomega)}{  \overline{\sigma}_n^2 (\Theta)} \Big(
       n^{-2} \sum_{\bfj \in \Lambda_n}
       (f_{\Theta}(\lambda_{\bfj}))^{-1} \frac{\partial
       f_{\Theta}(\lambda_{\bfj})}{\partial \theta_{s_1}} \Big) \,.\\
\frac{\partial^2 F_n (\bfomega, \Theta)}{\partial \theta_{s_1}
  \partial \theta_{s_2}}
  & =&  \frac{1}{\overline{\sigma}_n^2 (\Theta)} \frac{\partial^2
       f_{\Theta} (\bfomega)}{ \partial \theta_{s_1} \partial
       \theta_{s_2}} - (\overline{\sigma}_n^2 (\Theta))^{-1}
       \frac{\partial f_{\Theta}(\bfomega)}{\partial \theta_{s_1}} \Big(
       n^{-2} \sum_{\bfj \in \Lambda_n}
       (f_{\Theta}(\lambda_{\bfj}))^{-1} \frac{\partial
       f_{\Theta}(\lambda_{\bfj})}{\partial \theta_{s_2}} \Big)\\ 
 & & + \frac{ f_{\Theta}(\bfomega)}{\overline{\sigma}_n^2 (\Theta)} \Big(
       n^{-2} \sum_{\bfj \in \Lambda_n}
       (f_{\Theta}(\lambda_{\bfj}))^{-1} \frac{\partial
       f_{\Theta}(\lambda_{\bfj})}{\partial \theta_{s_1}} \Big) \Big(
       n^{-2} \sum_{\bfj \in \Lambda_n}
       (f_{\Theta}(\lambda_{\bfj}))^{-1} \frac{\partial
       f_{\Theta}(\lambda_{\bfj})}{\partial \theta_{s_2}} \Big)\\ 
 & & +  \frac{ f_{\Theta}(\bfomega)}{\overline{\sigma}_n^2 (\Theta)}  \Big(
       n^{-2} \sum_{\bfj \in \Lambda_n}
       (f_{\Theta}(\lambda_{\bfj}))^{-1} \frac{\partial^2
       f_{\Theta}(\lambda_{\bfj})}{\partial \theta_{s_1}\partial \theta_{s_2}} \Big)\,. 
\end{eqnarray*}
In view of \eqref{eq:new41}, \eqref{eq:new42} the
first- and second-order derivatives of $F_n (\bfomega, \Theta)$ 
and $F(\bfomega, \Theta)$ with respect to $\Theta$
exist.
Moreover, \eqref{eq:march1a} implies that 
  $\overline{\sigma}^2 (\Theta)$ and $ \overline{\sigma}_n^2 (
  \Theta)$ are uniformly bounded in $\Theta$ and thus
both $F_n(\bfomega, \Theta)$, $F(\bfomega, \Theta)$ are bounded away from zero
uniformly for $(\bfomega, \Theta) \in \Pi^2 \times \bfTh$.
Consequently, the first- and second-order derivatives of
$1/F_n(\bfomega , \Theta)$ and
$1/F(\bfomega, \Theta)$ are well defined.
Taking the derivatives of both terms in \eqref{eq:oo}, we have
\begin{eqnarray}
  {\bf0}_{1\times s}
  &=&   \sum_{\bfj\in\Lambda_n^2}
      \frac{\partial F_n(\bfla_{\bfj}, \Theta)}{ \partial \Theta}
      \frac{1}{F_n(\bfla_{\bfj}, \Theta)} = \sum_{\bfj \in\Lambda_n^2}
      f_{\Theta}(\bfla_{\bfj}) \frac{\partial (1/
      F_n(\bfla_{\bfj}, \Theta))}{ \partial \Theta}\nonumber\\
 & = & \int_{\Pi^2} \dfrac 1{F (\bfomega, \Theta)} \frac{\partial
       F(\bfomega, \Theta)}{\partial \Theta} \dif \bfomega =
       \int_{\Pi^2} f_{\Theta}(\bfomega) \frac{\partial(1/
       F(\bfomega , \Theta))}{\partial \Theta} \dif \bfomega \,,\nonumber\\
&&\label{eq:zeros000}\\
  {\bf0}_{s\times s}
  & = &  \sum_{\bfj\in\Lambda_n^2} \dfrac 1{F_n(\bfla_{\bfj}, \Theta)}
        \frac{\partial^2 F_n(\bfla_{\bfj}, \Theta)}{\partial
        \Theta^2} 
- \sum_{\bfj\in\Lambda_n^2} \bigg( \dfrac 1{F_n(\bfla_{\bfj},
               \Theta)}\frac{\partial F_n(\bfla_{\bfj}, 
        \Theta)}{\partial \Theta}  \bigg)^{\top}
               \bigg(\dfrac 1{F_n(\bfla_{\bfj}, \Theta)} \frac{\partial
               F_n(\bfla_{\bfj},   \Theta)}{\partial \Theta} \bigg)\nonumber\\
  & =  & \sum_{\bfj\in\Lambda^2} \dfrac 1{F_n(\bfla_{\bfj}, \Theta)}
        \frac{\partial^2 F_n(\bfla_{\bfj}, \Theta)}{\partial
        \Theta^2} - \sum_{\bfj\in\Lambda_n^2} \bigg( \frac{\partial \log F_n(\bfla_{\bfj}, \Theta)}{\partial \Theta}  \bigg)^{\top}
               \bigg(\frac{\partial \log F_n(\bfla_{\bfj}, \Theta)}{\partial \Theta}  \bigg)\nonumber\\ 
 & = & \int_{\Pi^2}\dfrac 1{ F(\bfomega, \Theta)} \frac{\partial^2
       F(\bfomega, \Theta)}{\partial \Theta^2} \dif \bfomega -
       \int_{\Pi^2} \bigg(\frac{\partial  \log F(\bfomega, \Theta)
       }{\partial \Theta}  \bigg)^{\top} \bigg(\frac{\partial  \log F(\bfomega, \Theta)
       }{\partial \Theta}  \bigg) \dif \bfomega\,.\nonumber\\
\label{eq:zeros001}
\end{eqnarray}
By definition of the Whittle estimator $\Theta_n$ as a minimizer,
$\partial \sigma_n^2 (\Theta_n)/\partial \Theta ={\bf0}_{1\times
  s}$. Then a  Taylor expansion of $\partial \sigma^2_n (\Theta) /\partial
\Theta$ about $\Theta=\Theta_n$ yields 
\begin{eqnarray}\label{eq:firstequation}
   \frac{\partial \sigma^2_n (\Theta_0)}{ \partial
  \Theta}
  &=&  \frac{\partial \sigma^2_n (\Theta_n)}{
      \partial \Theta} -  \frac{\partial^2
      \sigma^2_n (\Theta_n^+)}{\partial \Theta^2} \big( \Theta_n
      -\Theta_0 \big)
  =   -  \frac{\partial^2
      \sigma^2_n (\Theta_n^+)}{\partial \Theta^2} \big( \Theta_n
      -\Theta_0 \big)\,,
\end{eqnarray}
for some $\Theta_n^+ \in \bfTh$ with $\Theta_n^+ \stp \Theta_0$; see 
Lemma~\ref{lem:123},3. By Lemma~\ref{prop:convderivates} and
\eqref{eq:zeros001} we have
\beam\label{eq:deriveconv2}
  \frac{\partial^2 \sigma^2_n (\Theta_n^+)}{ \partial \Theta^2}
  &\stp&
 \frac{\overline{\sigma}^2 (\Theta_0) }{(2 \pi)^2} \int_{\Pi^2 } \dfrac 1{F
                       (\bfomega, \Theta_0)} \frac{\partial^2
                       F(\bfomega, \Theta_0)}{ \partial \Theta^2} \dif
                      \bfomega\\ 
  \nonumber
 &   =& \frac{\overline{\sigma}^2 (\Theta_0) }{(2 \pi)^2} \int_{\Pi^2 }
      \bigg( \frac{\partial \log F(\bfomega, \Theta_0)}{ \partial
      \Theta} \bigg)^{\top} \bigg( \frac{\partial \log F(\bfomega,
      \Theta_0)}{ \partial \Theta} \bigg) \dif \bfomega
     =: \frac{\overline{\sigma}^2 (\Theta_0) }{(2 \pi)^2} \wh W\,.\nonumber
     \eeam
In fact, we have for $\bfU$ uniform on $\Pi^2$,
\begin{eqnarray*}
  \dfrac{\widehat{W}}{(2\pi)^2}
   &=&\dfrac{1}{(2\pi)^2} \int_{\Pi^2} \Big[\Big(\frac{\partial \log
       f_{\Theta_0}(\bfomega)}{\partial \Theta} -
       \frac{1}{(2\pi)^2}\int_{\Pi^2} \frac{\partial \log
       f_{\Theta_0}(\bfomega)}{\partial \Theta}\dif \bfomega \Big)^{\top}\\&&\hspace{0.8cm}\Big(\frac{\partial \log
       f_{\Theta_0}(\bfomega)}{\partial \Theta} -
       \frac{1}{(2\pi)^2}\int_{\Pi^2} \frac{\partial \log
       f_{\Theta_0}(\bfomega)}{\partial \Theta}\dif \bfomega \Big)
\Big]       {\rm d}\bfla\\ 
 & = & \var \Big(\frac{ \partial \log f_{\Theta_0} (\bfU)}{ \partial \Theta} \Big)\,.
\end{eqnarray*}
By condition {\bf (W)}, $\wh W$ is non-singular. Together with 
\eqref{eq:deriveconv2} this implies that 
$\partial^2 \sigma^2_n (\Theta_n^+)/ \partial \Theta^2$ is positive definite on a set $B_n$ with 
$\P(B_n)\to1$.
The definition of the Whittle likelihood $\sigma^2_n(\Theta)$ and
\eqref{eq:zeros000} yield 
\beao
\frac{\partial \sigma^2_n (\Theta_0)}{ \partial
  \Theta}  =   \sum_{\bfj\in\Lambda_n^2}
                \frac{\partial (1/F_n
                (\bfla_{\bfj}, \Theta_0))}{\partial \Theta}\, \wh{f}(\bfla_{\bfj} ) =  \sum_{\bfj\in\Lambda_n^2} \frac{\partial (1/F_n
       (\bfla_{\bfj}, \Theta_0))}{ \partial \Theta} \big(
       \wh{f} (\bfla_{\bfj}) - f_{\Theta_0}(\bfla_{\bfj})
        \big)\,. 
\eeao
Combining this relation with
\eqref{eq:firstequation}, we have 
\beao
\frac{n}{\sqrt{m_n}}(  \Theta_n -\Theta_0) 
& = &-  \bigg(\frac{\partial^2 \sigma_n^2
      (\Theta_n^+)}{ \partial \Theta^2} \bigg)^{-1}  \frac{n}{\sqrt{m_n}}\sum_{\bfj\in\Lambda_n^2}
      \dfrac 1{F^2_n(\bfla_{\bfj}, \Theta_0)} 
\frac{\partial F_n(\bfla_{\bfj},
      \Theta_0)}{\partial \Theta} \big( \wh{f} (\bfla_{\bfj})
      - f_{\Theta_0}(\bfla_{\bfj})\big) \nonumber\\
  &=&- \wh W^{-1} 
\dfrac{(2\pi)^2}{\ov\sigma^2(\Theta_0)}(1+o_\P(1))\\&&
\times \frac{n}{\sqrt{m_n}}\sum_{\bfj \in\Lambda_n^2} 
\dfrac 1 {F_n^2(\bfla_{\bfj},
      \Theta_0)} 
\frac{\partial
      F_n(\bfla_{\bfj}, \Theta_0)}{\partial \Theta} \big(
      \wh{f} (\bfla_{\bfj}) -f_{\Theta_0}(\bfla_{\bfj}) \big)\,.
\eeao
Then, uniformly for $\bfomega \in \Pi^2$,
\begin{eqnarray*}
 \frac{\partial F_n (\bfomega, \Theta_0)}{ \partial \Theta}
 & = & - \frac{f_{\Theta_0} (\bfomega ) }{\big(\overline{\sigma}_n^2
       (\Theta_0)\big)^2} \frac{\partial \overline{\sigma}_n^2 (\Theta_0)
       }{\partial \Theta} = - \frac{f_{ \Theta_0}(\bfomega ) }{
       \overline{\sigma}_n^2 (\Theta_0)} \frac{\partial \log
       \overline{\sigma}_n^2 (\Theta_0)}{ \partial \Theta}\\ 
 & =& - \frac{f_{ \Theta_0}(\bfomega)}{ \overline{\sigma}_n^2
      (\Theta_0) } \bigg( \frac{1}{n^2} \sum_{\bfj \in \Lambda_n^2}
      \dfrac 1 {f_{\Theta_0}(\bfla_{\bfj})}  \frac{\partial
      f_{\Theta_0} (\bfla_{\bfj})}{\partial \Theta} \bigg)\\
&=&- \frac{f_{ \Theta_0}(\bfomega)}{ \ov \sigma^2
      (\Theta_0)}\frac 1{ (2\pi)^2} \int_{\Pi^2} f^{-1}_{\Theta_0}(\bfla) \frac{\partial
      f_{\Theta_0} (\bfla)}{\partial \Theta}\,{\rm d}\bfla (1+o(1)) 
=:\frac{f_{\Theta_0}(\bfomega)}{ \overline{\sigma}^2
      (\Theta_0)} \,\bfC\,(1+o(1))\,.
\end{eqnarray*}
Therefore, uniformly for $\bfomega$,
\begin{eqnarray*}
\dfrac 1{F_n^2 (\bfomega, \Theta_0)} \frac{\partial F_n (\bfomega,
  \Theta_0)}{\partial \Theta} & = & \frac{\overline{\sigma}^2
                                    (\Theta_0)}{f_{
                                    \Theta_0}(\bfomega)} \,\bfC\, (1+o(1))\,,
\end{eqnarray*}
and we finally have
\beao
\frac{n}{\sqrt{m_n}}(  \Theta_n -\Theta_0) 
  &=&- \wh W^{-1} \bfC\,(2\pi)^2 (1+o_\P(1))
\frac{n}{\sqrt{m_n}}\sum_{\bfj \in\Lambda_n^2} \dfrac{
      \wh{f} (\bfla_{\bfj}) -f_{\Theta_0}(\bfla_{\bfj}) }{f_{\Theta_0}(\bfla_\bfj)}\,.
\eeao
The conditions of Theorem~\ref {thm:cltintper} are satisfied for
$g=1/f_{\Theta_0}$. The second-order partial derivatives of 
  $g(\bfx)$, $\bfx \in \Pi^2$, are given by
\begin{eqnarray*}
\frac{\partial^2 g(\bfx)}{\partial x_1 \partial x_2}=\frac{2}{ f^3_{\Theta} (\bfx)} \frac{\partial f_{\Theta}(\bfx)}{
      \partial x_1} \frac{\partial f_{\Theta}(\bfx)}{ \partial x_2} -
      \frac{1}{f^2_{\Theta}(\bfx)} \frac{\partial^2
      f_{\Theta}(\bfx)}{\partial x_1 \partial x_2}\,. 
\end{eqnarray*}
Notice that $f_{\Theta_0}$ is bounded from below by \eqref{eq:jan30a},
and due to \eqref{eq:new41} we have
\beam\label{eq:rev1}
\Big|\frac{\partial f_{\Theta}(\bfx)}{
      \partial x_1} \frac{\partial f_{\Theta}(\bfx)}{ \partial x_2}
  \Big| + \Big| \frac{\partial^2
      f_{\Theta}(\bfx)}{\partial x_1 \partial x_2} \Big| \le c\, \Big(
  \sum_{\bfh \in \mathbb{Z}^2} |h_1 h_2 \gamma_{\Theta} (\bfh)|
  \Big)^2 <\infty\,,
\eeam
Therefore, \eqref{eq:gderiv} is satisfied. 

Now  Theorem~\ref {thm:cltintper} yields
\beam
\frac{n}{\sqrt{m_n}}(  \Theta_n -\Theta_0) &\std& 
-  (2\pi)^2\,\Big(\var\Big(\dfrac{\partial \log f_{\Theta_0}(\bfU)}{\partial \Theta}\Big)\Big)^{-1} \E \Big[\dfrac 1 {f_{\Theta_0}(\bfU)} \frac{\partial
      f_{\Theta_0} (\bfU)}{\partial \Theta}\Big]\nonumber\\&&\times\sum_{\bfh\in\bbz^2}Z_h \,\int_{\Pi^2} 
\cos(\bfh^\top \bfomega)\,\dfrac 1{f_{\Theta_0}(\bfomega)}\, {\rm d}\bfomega
=:\bfG\sim N({\bf0},W)\,.\label{eq:march8a}
\eeam
\end{proof}

\subsubsection*{Proof of Lemma~\ref{lem:123}}
{\bf Part 1.} We fix $\Theta \in \bfTh$.  Condition \eqref{eq:noone} is 
satisfied \fif\ $f_{\Theta_0}/f_\Theta$ is not a constant. Indeed, writing $\bfU$ for a uniform random vector on $\Pi^2$, \eqref{eq:noone} turns into
\beao
\exp\Big(-\E\Big[\log \dfrac{f_{\Theta_0}}{f_{\Theta}}(\bfU)\Big]\Big)\E\Big[\dfrac{f_{\Theta_0}}{f_{\Theta}}(\bfU)\Big]>1\,.
\eeao 
Taking logarithms, 
we get the equivalent inequality
\beao
\log \Big(\E\Big[\dfrac{f_{\Theta_0}}{f_{\Theta}}(\bfU)\Big]\Big)
> \E\Big[\log \dfrac{f_{\Theta_0}}{f_{\Theta}}(\bfU)\Big]\,,
\eeao
which is valid by
Jensen's inequality \eqref{eq:noone} with the exception that $f_{\Theta_0}/f_{\Theta}$ is constant.

{\bf Part 2.} We have
\beao
\lefteqn{ \sup_{\Theta\in\bfTh}\Big|\sigma_n^2 (\Theta) - 
\dfrac{\ov\sigma^2(\Theta)}{(2\pi)^2}\int_{\Pi^2} \dfrac{f_{\Theta_0}
(\bfomega)}{f_{\Theta}(\bfomega)}\,d\bfomega\Big|}\\
&\le & \sup_{\Theta\in\bfTh}\Big|\sigma_n^2 (\Theta) -
\dfrac{\ov\sigma^2(\Theta)}{n^2} \sum_{\bfj\in\Lambda_n^2}\dfrac{f_{\Theta_0}
(\bfla_\bfj)}{f_{\Theta}(\bfla_\bfj)}\Big|+\sup_{\Theta\in\bfTh}\Big|\dfrac{\ov\sigma^2(\Theta)}{n^2} \sum_{\bfj\in\Lambda_n^2}\dfrac{f_{\Theta_0}
(\bfla_\bfj)}{f_{\Theta}(\bfla_\bfj)}-
\frac{\ov\sigma^2(\Theta)}{(2\pi)^2}\int_{\Pi^2} \dfrac{f_{\Theta_0}(\bfomega)}
{f_{\Theta}(\bfomega)}\,d\bfomega\Big|\\
&=:&J_1+J_2\,.
\eeao
By uniform continuity of $(\ov\sigma^2(\Theta),f_\Theta(\bfomega))$ on
$\Pi^2\times \bfTh$,  $\sup_{\Theta\in\bfTh}\ov\sigma^2(\Theta)<\infty$ and
we have uniform \con\ of the Riemann sums in $J_2$ 
to the limiting  integrals. Therefore $J_2\to 0$. For the same reasons we also have $\sup_{\Theta\in\bfTh}|\ov\sigma_n^2(\Theta)-\ov\sigma^2(\Theta)|\to0$.
Now it suffices to show that
\beao
\wt J_1&:= &\sup_{\Theta\in\bfTh} \Big|\dfrac{1}{n^2} \sum_{\bfj\in\Lambda_n^2}
\dfrac{\wh f(\bfla_\bfj)}{f_{\Theta}(\bfla_\bfj)}
-\dfrac{1}{n^2} \sum_{\bfj\in\Lambda_n^2}
\dfrac{f_{\Theta_0}(\bfla_\bfj)}{f_{\Theta}(\bfla_\bfj)}
\Big| \\
&= &\sup_{\Theta\in\bfTh} \Big|\dfrac{1}{n^2} \sum_{\bfj\in\Lambda_n^2}
\Big(\wh f(\bfla_\bfj) -
f_{\Theta_0}(\bfla_\bfj)\Big)\big(1/f_{\Theta}(\bfla_\bfj) \big)\Big|
\,,
\eeao
vanishes as $\nto$. 
Let $q_{\bfl}( \bfomega; \Theta_0)$ be the C\`{e}saro mean of the first
$\bfl=(l, l)$-Fourier approximation to $1/f_{\Theta_0}$ on $\Pi^2$, i.e., 
for $\Theta\in\bfTh$,
\begin{eqnarray*}
  q_{\bfl} (\bfomega; \Theta)
  & =& \frac{1}{l^2} \sum_{i_1=0}^{l -1} \sum_{i_2 = 0}^{l -1}
      \sum_{|j_1|<l}\sum_{|j_2| <l} b_{\bfj} \ex^{-i \bfomega^\top \bfj} =
       \sum_{|j_1|<l} \sum_{|j_2 |<l} b_{\bfj}(\Theta) \prod_{k=1}^2 \Big(
       1- \frac{|j_k|}{l} \Big) \ex^{-i \,\omega_k\, j_k}\,,
\end{eqnarray*}
where 
\beao
b_{\bfj}(\Theta) = \frac{1}{(2\pi)^2}\, \int_{\Pi^2} \ex^{-i\,\bfomega^\top \bfj}
\frac{1}{f_{\Theta}(\bfomega)} \dif \bfomega\,.
\eeao
Fix $\vep>0$. Then for all sufficiently large $l$,
\beao
\sup_{\Theta\in\Pi^2\times \bfTh} \big|q_{\bfl}(\bfomega,\Theta)-1/f_\Theta(\bfomega)\big|<\vep\,.
\eeao 
For such an $l$,
\beao
\wt J_1&\le & \sup_{\Theta\in\bfTh}\Big|\dfrac{1}{n^2} \sum_{\bfj\in\Lambda_n^2}
(\wh f(\bfla_\bfj)-f_{\Theta_0}(\bfla_\bfj))\,q_{\bfl} (\bfla_\bfj; \Theta)\Big|
+ \vep \dfrac{1}{n^2}\sum_{\bfj\in\Lambda_n^2} 
|\wh f(\bfla_\bfj)-f_{\Theta_0}(\bfla_{\bfj})|=:\wt J_{11}+\vep\,\wt J_{12}\,.
\eeao 
Since $|b_{\bfj}(\Theta)|$ 
are uniformly bounded for $\Theta\in\bfTh$, we conclude that
\beao
\wt J_{11}&=&\sup_{\Theta\in\bfTh}\Big| 
 \sum_{|j_1|<l} \sum_{|j_2 |<l} b_{\bfj}(\Theta) \prod_{k=1}^2 \Big(
       1- \frac{|j_k|}{l} \Big) \big(\wh\gamma(\bfj)-\gamma(\bfj)\big)\Big|\stp 0\,,
\eeao
according to Theorem~\ref{thm:clt} and condition {\bf (M2)}.
Since $\vep>0$ is arbitrary it suffices to show that 
$\widetilde{\Jmath}_{12}$ is stochastically bounded.
Recalling the definition of $\wh f(\bfla_{\bfj})$ from
\eqref{eq:2021a}, we have for fixed $h>0$, some constant $c>0$,
\beao
\wt J_{12}
 & \le& c \,\sum_{0\le \|\bfh \| \le h}|\widehat{\gamma}(\bfh) - \gamma
 (\bfh)| + c\,\sum_{h< \| \bfh \| \le 3r_n} | \widehat{\gamma}(\bfh)|\\&& +c\,
\Big|\frac{1}{n^2} \sum_{\bfj\in \Lambda_n^2} \sum_{3r_n <\|\bfh \| <n } \big(
   \widehat{\gamma}(\bfh) - \E[\widehat{\gamma}(\bfh)] \big)
   \cos(\bfla_{\bfj}^{T}\bfh) \Big| \\ && + c\,\sum_{r_n<\|\bfh \| <n } \big| \E [\widehat{\gamma}(\bfh)]
   \big| + c\,\sum_{\|\bfh \| >h} \gamma(\bfh)\\
&=:&  Q_1+\cdots + Q_5  \,.
\eeao
Theorem~\ref{thm:clt} and condition {\bf (M2)} imply $Q_1\stp 0$ as $\nto$.

As discussed in the proof of Theorem~\ref{thm:cltintper}, condition {\bf (M1)} ensures that 
\begin{align*}
\E[Q_2]
\le c\,m_n \,\sum_{\bfh:h<\|\bfh\|\le 3r_n} \P(|X_{\bf0}|>a_m\,,|X_\bfh|>a_m)
  +O(r_n^2m_n^{-1})\,,
\end{align*}
and the \rhs\ vanishes by first letting $\nto$ and then $h\to\infty$.
Hence $Q_2$ is stochastically bounded. Since $\sum_{\bfj \in
  \Lambda_n^2} \cos (\bfla_{\bfj}^{\top} \bfh) =0$ we have $Q_3=0$. 
 An application of 
{\bf (M1)} shows that $Q_4\to 0$ as $\nto$. 
By assumption, $(\gamma(\bfh))$ is absolutely summable and therefore $Q_5$ vanishes as $h\to\infty$.

This completes the proof of Part 2.\\
{\bf Part 3.} 
We will prove $\Theta_n\stp\Theta_0$. Then $\sigma_n^2(\Theta_n)\stp \ov \sigma^2(\Theta_0)$ follows from Part 2.
\par
Suppose that $\Theta_n\stp \Theta_0$ does not hold.
Then by definition of $\Theta_n$ as minimizer of $\sigma_n^2(\Theta)$ for
 $\Theta\in\bfTh$, we obtain from Part 2. For all $x\ge 0$,
\beam\label{eq:feb4a}
\P\big(\sigma_n^2(\Theta_n)/\ov\sigma^2(\Theta_0)\le x\big)\ge \P\big(
\sigma_n^2(\Theta_0)/\ov\sigma^2(\Theta_0)\le x\big)
\to \1_{[0,x]}(1)\,.
\eeam
By the Helly-Bray Theorem and compactness of $\bfTh$, there exists a 
non-random integer \seq\ $(n_k)$ \st\ $\Theta_{n_k}\std \Theta^\ast$ and the 
\rv\ $\Theta^\ast$ is different from $\Theta_0$ on a set of positive \pro y.
The \fct al $F:C(\bfTh)\times \bfTh\to\bbr$ \st\ $F(g,\Theta)=g(\Theta)/\ov\sigma^2(\Theta_0)$
is continuous, where $C(\bfTh)$ is the space of continuous \fct s 
on $\bfTh$ equipped with the sup-norm. In view of Part 2., 
\beao
\sigma_n^2(\cdot)\stp \dfrac{\ov\sigma^2(\cdot)}{(2\pi)^2}\int_{\Pi^2} \dfrac{f_{\Theta_0}(\bfomega)}{f_\bullet(\omega)}\,d\bfomega\,.
\eeao
Hence $(\sigma_n^2)$ is tight in $C(\bfTh)$ and since $\Theta_{n_k}\std \Theta^\ast$,
$(\Theta_{n_k})$ is tight as well. Thus $(\sigma_{n_k}^2,\Theta_{n_k})$ is
tight in $C(\bfTh)\times \bfTh$ and there is a sub\seq\ $(n_k')$ of $(n_k)$
\st\  $(\sigma_{n_k'}^2,\Theta_{n_k'})$ converges in \ds . By the \cmt ,
\beao
F\big(\sigma_{n_k'}^2,\Theta_{n_k'}\big)=\dfrac{\sigma_{n_k'}^2(\Theta_{n_k'})}{\ov\sigma^2(\Theta_0)}
\std  \dfrac{1}{(2\pi)^2}\dfrac{\ov \sigma^2(\Theta^\ast)}{\ov\sigma^2(\Theta_0)}\int_{\Pi^2} \dfrac{f_{\Theta_0}(\bfomega)}{f_{\Theta^\ast}(\omega)}\,d\bfomega=:T(\Theta^\ast)\,.
\eeao
For a continuity point $1+\delta>0$ of the \ds\ of $T(\Theta^\ast)$ we have
\beao
\lim_{\kto} \P\big(\sigma_{n_k'}^2(\Theta_{n_k'})/\ov \sigma^2(\Theta_0)\le 1+\delta\big)
= \P(\Theta^\ast=\Theta_0)+ \P\big(T(\Theta^\ast)\le 1+\delta\,,\Theta\ne \Theta_0\big)\,.
\eeao
Note that by Part 1., $\{\Theta\in \bfTh: T(\Theta)>1\}=
\{\Theta\in\bfTh: \Theta\ne \Theta_0\}$. By \eqref{eq:feb4a}
we conclude that for sufficiently small $\delta$ and $x=1+\delta$ of the 
above type, 
\beao
1&\le &\lim_{\nto} \P\big(\sigma_{n_k'}^2(\Theta_{n_k'})/\ov\sigma^2(\Theta_0)\le 1+\delta\big)\\
&=& \P(\Theta^\ast=\Theta_0)+\P\big(1<T(\Theta^\ast)\le 1+\delta\,,\Theta\ne \Theta_0\big)\,,
\eeao 
and the \rhs\ can be made arbitrarily close $\P(\Theta^\ast=\Theta_0)<1$
which yields a contradiction and proves Part 3. \qed 

Recall the definition of the \fct s $F$ and $F_n$ from \eqref{eq:feb5a}.
\begin{lemma} \label{prop:convderivates}
 Assume {\bf (W)}. Let $(\Theta_n)$ be a sequence
 in $\bfTh$ \st\ $\Theta_n\stp \Theta_0$. Then
\begin{equation}
\label{eq:qq4}
\frac{\partial^2 \sigma_n^2(\Theta_n)}{ \partial \Theta^2}
\stp\frac{\overline{\sigma}^2 (\Theta_0)}{(2\pi)^2}
\int_{\Pi^2} \dfrac 1 {F (\bfomega, \Theta_0)}\frac{\partial^2 F(\bfomega,
  \Theta_0)}{\partial \Theta^2} \dif \bfomega\,.
\end{equation}
\end{lemma}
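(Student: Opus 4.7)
The plan is in two main steps. First, I reduce to evaluating the second derivative at the deterministic parameter $\Theta_0$. Then, at $\Theta_0$, I separate the periodogram into its deterministic mean and a centred part, apply Theorem~\ref{thm:cltintper} to the centred part, evaluate the Riemann-sum limit of the deterministic part, and rewrite the answer in the form claimed using Kolmogorov's identity.

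Using $f_\Theta=\overline\sigma_n^2(\Theta)F_n(\cdot,\Theta)$, write $\sigma_n^2(\Theta)=n^{-2}\sum_{\bfj\in\Lambda_n^2}\wh f(\bfla_\bfj)/F_n(\bfla_\bfj,\Theta)$, so that
\[
\frac{\partial^2\sigma_n^2(\Theta)}{\partial\Theta^2}=\frac{1}{n^2}\sum_{\bfj\in\Lambda_n^2}\wh f(\bfla_\bfj)\,\frac{\partial^2}{\partial\Theta^2}\Big(\frac{1}{F_n(\bfla_\bfj,\Theta)}\Big).
\]
Condition (W2) together with the summability assumption (W4) makes $(\bfomega,\Theta)\mapsto 1/F_n(\bfomega,\Theta)$ and all of its mixed partial derivatives up to third order in $\Theta$ bounded uniformly in $n$ and in $(\bfomega,\Theta)\in\Pi^2\times\bfTh$. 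Since $n^{-2}\sum_\bfj\wh f(\bfla_\bfj)=\wh\gamma(\bf0)$ is stochastically bounded, the mean-value theorem then yields
\[
\Big|\frac{\partial^2\sigma_n^2(\Theta_n)}{\partial\Theta^2}-\frac{\partial^2\sigma_n^2(\Theta_0)}{\partial\Theta^2}\Big|=O_\P(\|\Theta_n-\Theta_0\|)\stp 0,
\]
and it suffices to prove \eqref{eq:qq4} with $\Theta_n$ replaced by $\Theta_0$.

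Set $g_n(\bfomega)=\partial^2(1/F_n(\bfomega,\Theta_0))/\partial\Theta^2$ and let $g(\bfomega)=\partial^2(1/F(\bfomega,\Theta_0))/\partial\Theta^2$ be its pointwise limit, which exists uniformly in $\bfomega$ because $\overline\sigma_n^2(\Theta_0)$ and its $\Theta$-derivatives are exponentials of Riemann sums that converge to their continuous counterparts at rate $O(n^{-2})$. Splitting
\[
\frac{\partial^2\sigma_n^2(\Theta_0)}{\partial\Theta^2}=\frac{1}{n^2}\sum_\bfj\big(\wh f(\bfla_\bfj)-f_{\Theta_0}(\bfla_\bfj)\big)g_n(\bfla_\bfj)+\frac{1}{n^2}\sum_\bfj f_{\Theta_0}(\bfla_\bfj)g_n(\bfla_\bfj),
\]
I replace $g_n$ by $g$ in the first sum at deterministic cost $O(n^{-2})$. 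The function $g$ is periodic on $\Pi^2$, its mixed derivative $\partial^2 g/(\partial\omega_1\partial\omega_2)$ is bounded, and its Fourier coefficients are finite linear combinations of $\gamma_{\Theta_0}(\bfh)$, $\partial\gamma_{\Theta_0}(\bfh)/\partial\theta_i$ and $\partial^2\gamma_{\Theta_0}(\bfh)/(\partial\theta_i\partial\theta_j)$, all absolutely summable by (W4). Thus Theorem~\ref{thm:cltintper} applies to $g$ and shows that the first sum is $O_\P(\sqrt{m_n}/n)=o_\P(1)$, while the second is a Riemann sum of a continuous function and converges to $(2\pi)^{-2}\int_{\Pi^2}f_{\Theta_0}(\bfomega)\,\partial^2(1/F(\bfomega,\Theta_0))/\partial\Theta^2\,d\bfomega$.

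It remains to rewrite this integral in the form of \eqref{eq:qq4}. Using $f_{\Theta_0}=\overline\sigma^2(\Theta_0)F(\cdot,\Theta_0)$, the algebraic identity $F\,\partial^2(1/F)/\partial\Theta^2=-F^{-1}\partial^2 F/\partial\Theta^2+2F^{-2}(\partial F/\partial\Theta)^\top(\partial F/\partial\Theta)$, and the Kolmogorov identity $\int_{\Pi^2}F^{-1}\partial^2 F/\partial\Theta^2\,d\bfomega=\int_{\Pi^2}F^{-2}(\partial F/\partial\Theta)^\top(\partial F/\partial\Theta)\,d\bfomega$ already derived in \eqref{eq:zeros001}, the limit simplifies to $\overline\sigma^2(\Theta_0)(2\pi)^{-2}\int_{\Pi^2}F^{-1}(\bfomega,\Theta_0)\partial^2 F(\bfomega,\Theta_0)/\partial\Theta^2\,d\bfomega$, which is \eqref{eq:qq4}. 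The main obstacle will be the second step: verifying that $g$ indeed satisfies the regularity and absolute summability hypotheses of Theorem~\ref{thm:cltintper}. Condition (W4) is tailored exactly for this, since it controls not only $\gamma_\Theta$ but also its first and second parameter derivatives uniformly on $\bfTh$.
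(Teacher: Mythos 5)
Your overall architecture (reduce to $\Theta_0$, split off the centred periodogram, finish with the identity $F\,\partial^2(1/F)/\partial\Theta^2=-F^{-1}\partial^2F/\partial\Theta^2+2F^{-2}(\partial F/\partial\Theta)^\top(\partial F/\partial\Theta)$ and \eqref{eq:zeros001}) is sound, and the final algebra is correct. But your first step has a genuine gap: to control $\partial^2\sigma_n^2(\Theta_n)/\partial\Theta^2-\partial^2\sigma_n^2(\Theta_0)/\partial\Theta^2$ by the mean value theorem you need the \emph{third} $\Theta$-derivatives of $1/F_n$, hence of $f_\Theta$, hence third derivatives of $\gamma_\Theta$. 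Condition \eqref{eq:new42} only bounds $\sum_\bfh|\partial\gamma_\Theta(\bfh)/\partial\theta_i|$ and $\sum_\bfh|\partial^2\gamma_\Theta(\bfh)/\partial\theta_i\partial\theta_j|$; nothing in {\bf (W)} gives you a third derivative, so the asserted uniform boundedness "up to third order" is not available. The paper avoids this entirely: it proves \emph{uniform-in-$\Theta$} convergence of $\partial^2\sigma_n^2(\Theta)/\partial\theta_{s_1}\partial\theta_{s_2}$ to a continuous limit function of $\Theta$ by repeating the argument of Part~2 of Lemma~\ref{lem:123} (Ces\`aro/Fej\'er approximation of $\partial^2 F_n^{-1}/\partial\theta_{s_1}\partial\theta_{s_2}$ plus consistency of $\wh\gamma$), and then evaluates at $\Theta_n\stp\Theta_0$; that route needs only the second derivatives that \eqref{eq:new42} supplies.

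Your second step also asks more of the hypotheses than {\bf (W)} delivers. Invoking Theorem~\ref{thm:cltintper} with $g=\partial^2(1/F(\cdot,\Theta_0))/\partial\Theta^2$ requires \eqref{eq:gderiv}, i.e.\ a bound on $\partial^2 g/\partial\omega_1\partial\omega_2$; this forces weighted sums such as $\sum_\bfh|h_1h_2\,\partial\gamma_\Theta(\bfh)/\partial\theta_i|$ and $\sum_\bfh|h_1h_2\,\partial^2\gamma_\Theta(\bfh)/\partial\theta_i\partial\theta_j|$ to be finite, which \eqref{eq:new41}--\eqref{eq:new42} do not give (only the unweighted versions of the derivative sums are assumed). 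Moreover the Fourier coefficients of $1/f_{\Theta_0}$ are not "finite linear combinations" of the $\gamma_{\Theta_0}(\bfh)$ and their derivatives; absolute summability of those coefficients has to come from Wiener's lemma (reciprocals and products in the Wiener algebra), using \eqref{eq:jan30a}. Since only an $o_\P(1)$ statement is needed here, the CLT is overkill: the paper's law-of-large-numbers argument from Part~2 of Lemma~\ref{lem:123} suffices and does not require any of these extra regularity conditions. If you want to keep your CLT-based route you would have to add the weighted summability of the parameter derivatives of $\gamma_\Theta$ (and a third-derivative condition for your step~1) to {\bf (W)}.
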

\begin{proof}[Sketch of the proof]
The proof uses similar arguments as in Part 2. above.
Notice that the left-hand side is an
$s\times s$-matrix. It is enough to show that for each pair
$(s_1,s_2)\in \{ 1, \ldots ,s\}^2$,
\begin{equation}
\label{eq:qq5}
\frac{\partial^2 \sigma_n^2 (\Theta_n)}{\partial \theta_{s_1}
  \partial\theta_{s_2} } \stp \frac{\overline{\sigma}^2 (\Theta_0)}{(2\pi)^2}
\int_{\Pi^2} \dfrac 1{F (\bfomega, \Theta_0)}\frac{\partial^2 F(\bfomega,
  \Theta_0)}{\partial \theta_{s_1} \partial \theta_{s_2}} \dif \bfomega\,.
\end{equation}
Recall that \[
  \sigma_n^2 (\Theta) = \frac{1}{n^2} \sum_{\bfj\in\Lambda*n^2}
  \frac{\wh{f}(\bfla_{\bfj})}{f_{\Theta} (\bfla_{\bfj})/
    \overline{\sigma}_n^2 (\Theta)} = \frac{1}{n^2} \sum_{\bfj \in\Lambda_n^2}
\dfrac{  \wh{f}(\bfla_{\bfj})}{ F_n(\bfla_{\bfj}, \Theta)}\,,
  \quad \Theta\in \bfTh\,,
\]
and thus
\[
\frac{\partial^2 \sigma_n^2(\Theta)}{\partial \theta_{s_1} \partial
  \theta_{s_2}} = \frac{1}{n^2} \sum_{\bfj\in\Lambda_n^2}^n \wh{f}
(\bfla_{\bfj} ) \frac{\partial^2 F_n^{-1}(\bfla_{\bfj}, \Theta) }{
\partial \theta_{s_1} \partial \theta_{s_2}}\,.
\]
Now we can proceed as in the proof of Part 2. We omit further details.
\end{proof}
\appendix 
\section{Auxiliary results}\setcounter{equation}{0}
\begin{lemma}\label{lem:psih} Assume that the \fct\ $g$ satisfies the
  conditions of Theorem ~\ref{thm:cltintper}. The following statements
hold for some constant $c>0$: 
  \begin{enumerate}[\rm 1.]
  \item $|\psi_{\bfh}|\le c\, |h_1h_2|^{-1}$, $h_1h_2\ne 0$, 
\item[\rm 2.] $|\psi_{\bfh}^{(n)}| \le c \,\max(|h_1h_2|^{-1}, n^{-1})$, 
$1\le |h_1|,|h_2|\le n/2$,
\item[\rm 3.]  $|\psi_{\bfh} - \psi_{\bfh}^{(n)}|\le  cn^{-1}$,  $1\le
  |h_1|,|h_2| \le n/2$. 
  \end{enumerate}
\end{lemma}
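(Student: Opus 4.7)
Parts 1 and 2 are the continuous and discrete sides of the same two-fold integration-by-parts argument, and Part 3 compares them. For Part 1, I write $\cos(\bfh^\top\bfomega)=\Re\,e^{i\bfh^\top\bfomega}$ and integrate by parts once in $\omega_1$ and once in $\omega_2$. Because $g$ is $2\pi$-periodic in each coordinate and $e^{2\pi i h_j}=1$ for integer $h_j$, both boundary contributions vanish, leaving
\[
\psi_\bfh \;=\; -\frac{1}{h_1h_2}\,\Re\int_{\Pi^2} e^{i\bfh^\top\bfomega}\,\frac{\partial^2 g(\bfomega)}{\partial\omega_1\,\partial\omega_2}\,d\bfomega,
\]
and \eqref{eq:gderiv} immediately gives $|\psi_\bfh|\le c/|h_1h_2|$.

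For Part 2, set $\xi_j=2\pi h_j/n$, $A_k=\sum_{\ell=1}^k e^{i\xi_1\ell}$, $B_k=\sum_{\ell=1}^k e^{i\xi_2\ell}$, and apply Abel summation in $j_1$ and then in $j_2$ to $\sum_{j_1,j_2=1}^n e^{i(\xi_1 j_1+\xi_2 j_2)}g(\bfla_\bfj)$. The boundary terms $A_n=B_n=0$ vanish because $1\le|h_j|\le n/2$ forces $h_j\not\equiv 0\pmod n$, leaving
\[
\psi_\bfh^{(n)}\;=\;\frac{(2\pi)^2}{n^2}\,\Re\sum_{j_1,j_2=1}^{n-1}A_{j_1}B_{j_2}\,\Delta_1\Delta_2 g(\bfla_\bfj),
\]
where the mixed second difference equals $\int_{Q_\bfj}\partial_1\partial_2 g\,d\bfomega$ over the cell $Q_\bfj$ of side $2\pi/n$ and is thus $O(n^{-2})$. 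The closed form $A_k=e^{i\xi_1}(e^{i\xi_1 k}-1)/(e^{i\xi_1}-1)$ combined with $|\sin x|\ge 2|x|/\pi$ on $[-\pi/2,\pi/2]$ (where the restriction $|h_1|\le n/2$ is used) gives $|A_k|\le 1/|\sin(\pi h_1/n)|\le cn/|h_1|$, and similarly for $B_k$. Assembling these estimates yields $|\psi_\bfh^{(n)}|\le c/|h_1h_2|$, which already dominates the stated $c\max(|h_1h_2|^{-1},n^{-1})$.

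For Part 3, which I expect to be the main obstacle, I would exploit the dual representations obtained above. Taylor expanding $i\xi_j/(e^{i\xi_j}-1)=1+O(\xi_j)$ shows that $(2\pi)^2 n^{-2}A_{j_1}B_{j_2}$ matches $-(h_1h_2)^{-1}e^{i\bfh^\top\bfla_\bfj}$ up to a multiplicative factor $1+O(|h_1|/n+|h_2|/n)$ and additive cross terms that telescope against $\Delta_1\Delta_2 g$ by the $2\pi$-periodicity of $g$. Combining this with a Riemann-sum comparison between $\sum_\bfj e^{i\bfh^\top\bfla_\bfj}\Delta_1\Delta_2 g(\bfla_\bfj)$ and $\int e^{i\bfh^\top\bfomega}\partial_1\partial_2 g\,d\bfomega$, of relative order $|\bfh|/n$, and then multiplying through by the prefactor $|h_1h_2|^{-1}$, one arrives at $|\psi_\bfh-\psi_\bfh^{(n)}|\le c/n$. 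The delicate point will be to track the factors of $|h_1|$ and $|h_2|$ carefully so that each $|\bfh|/n$ correction is absorbed by the $|h_1h_2|^{-1}$ prefactor, producing a bound that is genuinely uniform over $1\le|h_1|,|h_2|\le n/2$ rather than polynomial in $|\bfh|/n$.
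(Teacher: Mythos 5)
Your Part 1 is exactly the paper's own argument: two integrations by parts, boundary terms killed by periodicity, then \eqref{eq:gderiv}. For Parts 2 and 3 you take a genuinely different route. The paper proves 3 directly --- it writes $\psi_\bfh-\psi_\bfh^{(n)}$ as a sum over the cells $\bfla_\bfj+[{\bf0},\bfla_{(1,1)}]$, splits the integrand into the variation of $g$ (an $O(1/n)$ Taylor term) and the variation of the cosine, and controls the latter by a summation by parts built on $2\sin x\sin y=\cos(x-y)-\cos(x+y)$, with the factor $(\lambda_{h_1}/2)/\sin(\lambda_{h_1}/2)$ bounded precisely because $|h_1|\le n/2$ --- and then obtains 2 as the triangle-inequality corollary of 1 and 3. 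You instead prove 2 directly by double Abel summation (the exact discrete analogue of the integration by parts in Part 1, with $A_n=B_n=0$ playing the role of the vanishing boundary terms), which in fact yields the stronger bound $c\,|h_1h_2|^{-1}$ on the whole range $1\le|h_1|,|h_2|\le n/2$, and you get 3 by comparing the two resulting representations. Your Part 3 is only a sketch, but the estimates do close: the coefficient error $|i\xi_j/(e^{i\xi_j}-1)-1|=O(|h_j|/n)$ and the Riemann-sum error $|e^{i\bfh^\top\bfla_\bfj}-e^{i\bfh^\top\bfomega}|=O((|h_1|+|h_2|)/n)$ on each cell are both absorbed by the prefactor $|h_1h_2|^{-1}$, since $(|h_1|+|h_2|)/|h_1h_2|\le 2$; and the cross terms coming from expanding $(e^{i\xi_1 j_1}-1)(e^{i\xi_2 j_2}-1)$ are harmless because, for instance, $\sum_{j_2=1}^{n-1}\Delta_1\Delta_2 g(\bfla_\bfj)=\Delta_1 g(\lambda_{j_1},2\pi)-\Delta_1 g(\lambda_{j_1},\lambda_1)$, which by periodicity in the second coordinate is again a mixed second difference over a single cell and hence $O(n^{-2})$; summing $n-1$ such terms against $|A_{j_1}|\cdot|(e^{i\xi_1}-1)(e^{i\xi_2}-1)|^{-1}n^{-2}(2\pi)^2=O(|h_1h_2|^{-1})$ gives $O(1/n)$. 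That last step is the one place where periodicity must be invoked a second time: without it the telescoped rows are only $O(1/n)$ and the cross terms would contribute $O(|h_1h_2|^{-1})$, which is not $O(1/n)$ for small $\bfh$ --- this is precisely the ``delicate point'' you flagged, and it does work out. Both proofs ultimately rest on the same two facts, periodicity of $g$ and $|\sin(\pi h/n)|\ge 2|h|/n$ for $|h|\le n/2$, so neither buys extra generality; but your version makes the discrete/continuous duality transparent and delivers the sharper bound in Part 2.
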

\begin{proof}
If 1.  and 3. hold then 2. follows. 
Integration by parts and the fact that derivatives of periodic \fct s are periodic yield
\begin{eqnarray*}
\psi_{\bfh} & =&  \int_{\Pi^2} \cos (\bfh^{\top} \bfx) g(\bfx) \dif
                 \bfx 
  = \int_{\Pi^2} \cos( h_1 x_1) \cos (h_2 x_2)
                 g(\bfx) \dif \bfx - \int_{\Pi^2} \sin (h_1 x_1) \sin
                 (h_2 x_2) g(\bfx )\dif \bfx\\ 
 & = & \int_0^{2\pi} \cos (h_2 x_2)\Big[ g(\bfx) \dfrac{\sin (h_1 x_1)}{h_1}\mid_{x_1=0}^{2\pi} - \int_0^{2\pi} \dfrac{\sin (h_1 x_1)}{h_1}
       \frac{\partial g(\bfx)}{ \partial x_1}\dif x_1  \Big]\dif
       x_2\\ 
 & & -  \int_0^{2\pi} \sin (h_2 x_2)\Big[ -g(\bfx) \dfrac{\cos (h_1 x_1)}{h_1} \mid_{x_1
       =0}^{2\pi} + \int_0^{2\pi} \dfrac{\cos (h_1 x_1)}{h_1}
       \frac{\partial g(\bfx)}{ \partial x_1}\dif x_1  \Big]\dif
       x_2\\ 
 & = & \int_{\Pi^2} \dfrac{\sin (h_1x_1) \sin (h_2 x_2)}{h_1h_2}
       \frac{\partial^2 g(\bfx)}{\partial x_1 \partial x_2} \dif
       \bfx - \int_{\Pi^2} \dfrac{\cos (h_1x_1) \cos (h_2 x_2)}{h_1h_2}
       \frac{\partial^2 g(\bfx)}{\partial x_1 \partial x_2} \dif
       \bfx\\ 
 & =&  - \int_{\Pi^2} (h_1 h_2 )^{-1} \cos (\bfh^{\top} \bfx)
      \frac{\partial^2 g(\bfx)}{\partial x_1 \partial x_{2}}
      \dif \bfx\,.
\end{eqnarray*}
By assumption \eqref{eq:gderiv},  
$\sup_{\bfx\in \Pi^2}|\partial^2 g(\bfx) /(\partial
x_1 \partial x_2)|<\infty$. Therefore $|\psi_\bfh|\le c\,|h_1h_2|^{-1}$,
proving 1.
\par
In the sequel we deal with part 3.
Write $\bfla_\bfj=(\lambda_{j_1},\lambda_{j_2})$ and $\int'= 
\int_{{\bf0}\le\bfx\le {\bfla}_{(1,1)}}$. Then we have
\begin{eqnarray*}
  \big| \psi_{\bfh} - \psi_{\bfh}^{(n)} \big| 
  & = & \Big|\sum_{\bfj \in \Lambda_n^2} \int'
        \big(\cos(\bfh^\top (\bfla_\bfj+\bfx))\, 
g(\bfla_{\bfj } +\bfx)  - \cos(\bfh^\top \bfla_\bfj)\, g(\bfla_\bfj)
        \dif \bfx \Big|\\ 
 &\le  & \Big| \sum_{\bfj \in \Lambda_n^2} \int'
       \Big(\prod_{i=1}^2 \cos (h_i (\lambda_{j_i} +x_i))\,
       g(\bfla_\bfj +\bfx)  - \prod_{i=1}^2 \cos
       (h_i \lambda_{j_i})\, g(\bfla_\bfj)\Big)\dif
       \bfx \Big|\\ 
 & & + \Big| \sum_{\bfj \in \Lambda_n^2} \int'
       \Big(\prod_{i=1}^2 \sin (h_i (\lambda_{j_i} +x_i))\,
       g(\bfla_\bfj +\bfx)  - \prod_{i=1}^2 \sin
       (h_i \lambda_{j_i}) g(\bfla_\bfj)\Big)\dif
       \bfx \Big| \\  
 & =& Q_1(\bfh) + Q_2(\bfh)\,. 
\end{eqnarray*}
We only deal with  $Q_1(\bfh )$; $Q_2(\bfh)$ can be treated similarly.
We have
\beao
|Q_1(\bfh)|&\le & \Big| \sum_{\bfj \in \Lambda_n^2} \int'
       \prod_{i=1}^2 \cos (h_i (\lambda_{j_i} +x_i))\,
       \big(g(\bfla_\bfj +\bfx) -g(\bfla_\bfj)\big)\dif \bfx\Big|\\
       && + \Big| \sum_{\bfj \in \Lambda_n^2} \int'
\Big(\prod_{i=1}^2 \cos(h_i \lambda_{j_i})\, -\prod_{i=1}^2 \cos (h_i (\lambda_{j_i} +x_i))\Big)\, g(\bfla_\bfj)\dif
       \bfx \Big|\\ 
&=&|Q_{11}(\bfh)|+|Q_{12}(\bfh)|\,.
 \eeao
In view of \eqref{eq:gderiv} the \fct\ $g$ has a uniformly bounded 
derivative on $\Pi^2$. Therefore a  Taylor expansion yields
$|g(\bfla_\bfj +\bfx) -g(\bfla_\bfj)|\le c\,\|\bfx\|$ and
\beao
|Q_{11}(\bfh)|&\le &c\,\sum_{\bfj \in \Lambda_n^2} \int'\|\bfx\|\,\dif \bfx
\le c\,n^{-1}\,. 
\eeao 
Now we turn to $|Q_{12}(\bfh)|$:
\beao
|Q_{12}(\bfh)|&\le &\Big| \sum_{\bfj \in \Lambda_n^2} g(\bfla_\bfj)\int'
\big(\cos (h_1 (\lambda_{j_1} +x_1))-\cos(h_1\lambda_{j_1})\big)\cos(h_2\lambda_{j_2}+x_2)
\, \dif
       \bfx \Big|\\
&&+\Big| \sum_{\bfj \in \Lambda_n^2}g(\bfla_\bfj) \int'
\big(\cos (h_2 (\lambda_{j_2} +x_2))-\cos(h_2\lambda_{j_2})\big)\cos(h_1\lambda_{j_1})\, \dif
\bfx \Big|\\
& =&|Q_{121}(\bfh)|+ |Q_{122}(\bfh)|\,.
\eeao
We bound $|Q_{121}(\bfh)|$ only, the bound for $|Q_{122}(\bfh)|$ is
similar. We have 
%
\begin{eqnarray*}
|Q_{121}(\bfh)|&\le &c\,\Big|\sum_{j_1 =1}^ng(\bfla_{\bfj})\int_0^{\lambda_1}\,
\big(\cos (h_1 (\lambda_{j_1} +x_1))-\cos(h_1\lambda_{j_1})\big)
\, \dif x_1 \Big|\\
&=&c\,\Big|\sum_{j_1 =1}^ng(\bfla_{\bfj}) \int_0^{\lambda_1}  \int_0^{x_1} h_1
      \sin(h_1 (\lambda_{j_1} +y))
      \dif y \dif x_1 \Big|\\
&=&c\,\Big|\sum_{j_1 =1}^ng(\bfla_{\bfj}) \int_0^{\lambda_1} \int_0^{x_1}
        \frac{h_1}{ 2 \sin (\lambda_{h_1}/2)}\big( 2 \sin
        (\lambda_{h_1}/2)  \sin(h_1 (\lambda_{j_1} + y))\big)
              \dif y \dif x_1 \Big|\\
&=&c\,\Big|\frac{h_1}{ 2
      \sin (\lambda_{h_1}/2)} \sum_{j_1 =1}^n \int_0^{\lambda_1} g(\bfla_{\bfj})\\
&&\int_0^{x_1} 
      \big(\cos(h_1 \lambda_1 (2 j_1 -1) /2 +h_1 y) -\cos(h_1
      \lambda_1\,(2j_1+1)/2 +h_1 y)\big) \dif y\dif x_1 \Big|\\
&=&c\,\dfrac{n}{2\pi}\Big|\frac{\lambda_{h_1}/2}{
      \sin (\lambda_{h_1}/2)}\Big|\,\wt Q(\bfh)\,.
\end{eqnarray*}
In the second last step we used the identity $2 \sin x \sin y =\cos (x-y)- 
\cos (x+y)$. 
 The \fct\ $x/\sin x$ is continuous on $[0, \pi/2]$ (with the
convention that $0/\sin (0)=1$), hence $\sup_{x\in [0,\pi/2]}|x/\sin x|< \infty$.
Observing that $\lambda_{h_1}/2 \in [0, \pi/2]$ is equivalent to
$h_1\le n/2$, we obtain
\beao
\sup_{h_1: \lambda_{h_1}/2 \in [0, \pi/2]}(\lambda_{h_1}/2)/\sin ( \lambda_{h_1}/2) <\infty\,.
\eeao

By virtue of the component-wise
periodicity of $g$, i.e., $ g(2\pi  +x_1,x_2)=g(\bfx)$ for $\bfx\in \Pi$, we have 
\begin{eqnarray*}
\wt Q(\bfh)
 & =&   \Big| \int_0^{\lambda_1} \int_0^{x_1} \Big( \sum_{j_1 =2}^{n} \cos(h_1
       \lambda_{j_1} -\lambda_{h_1}/2 +h_1 y) \big( g(\bfla_{\bfj}) -
       g(\lambda_{j_1 -1}, \lambda_{j_2}) \big)\\ 
 & &   \qquad+ \cos
      (\lambda_{h_1}/2 +h_1 y) g(\lambda_1, \lambda_{j_2}) - \cos
      (\lambda_{h_1}(2n+1)/2 +h_1 y) g(\lambda_n, \lambda_{j_2})
     \Big)\dif y\dif x_1   \Big|\\ 
 &= &   \Big| \int_0^{\lambda_1}\int_0^{x_1} \Big( \sum_{j_1 =2}^{n} \cos(h_1
       \lambda_{j_1} -\lambda_{h_1}/2 +h_1 y) \big( g(\bfla_{\bfj}) -
       g(\lambda_{j_1 -1}, \lambda_{j_2}) \big)\\ 
 & &   \qquad+ \cos
      (\lambda_{h_1}/2 +h_1 y) g(\lambda_1, \lambda_{j_2}) - \cos
      (\lambda_{h_1}/2 +h_1 y) g(0, \lambda_{j_2}) \Big)\dif y\dif x_1   \Big|\\ 
 & = &  \Big| \int_0^{\lambda_1} \int_0^{x_1} \sum_{j_1 =1}^n \cos(h_1
       \lambda_{j_1} -\lambda_{h_1}/2 +h_1 y) \big( g(\bfla_{\bfj}) -
       g(\lambda_{j_1 -1}, \lambda_{j_2}) \big) \dif y\dif x_1 \Big|\\ 
 & \le & 
         \dfrac{\lambda_1^2}{2}\,\sum_{j_1 =1}^n \big| g(\bfla_{\bfj}) -
       g(\lambda_{j_1 -1}, \lambda_{j_2}) \big|  \le c\, n^{-2}
        \,.
\end{eqnarray*}
In the last step we used the fact that $| g(\bfla_{\bfj})
 -g(\lambda_{j_1 -1}, \lambda_{j_2}) \big| \le c/n$. Combining the previous
bounds, we proved that $|Q_{12} (\bfh)| =O(n^{-1})$ as $n\to \infty$ uniformly 
for $h_1,h_2\le n/2$.  

This completes the proof. 
\end{proof}

\section{Proof of Lemma~\ref{lem:q2neg}} \label{sec:covv}
We start with some bounds for the  covariances $\cov(\widehat{\gamma}(\bfs),
\widehat{\gamma}(\bfs+ \bfh ))$, $\|\bfs\|, \|\bfs+\bfh\|<n$. 
For simplicity we restrict ourselves to $\bfs,\bfs+\bfh\in \Lambda_n^2$;
the remaining cases can be dealt with in a similar way.
We have
\begin{align}
\cov(\widehat{\gamma}(\bfs), \widehat{\gamma}(\bfs + \bfh)) & =
  \frac{m_n^2}{n^4} \Big[ \sum_{i_1 =1}^{n-s_1} \sum_{i_2 =1}^{n-s_2}
  \sum_{j_1 =1}^{n-s_1 -h_1} \sum_{j_2 =1}^{n-s_2 -h_2}
 \big( \widehat{\Imath}_{\bfii} \widehat{\Imath}_{\bfii+\bfs}
  \widehat{\Imath}_{\bfj } \widehat{\Imath}_{\bfj + \bfs + \bfh}-
   \E \big[   \widehat{\Imath}_{\bfii} \widehat{\Imath}_{\bfii+\bfs}
 \big]\, \E\big[\widehat{\Imath}_{\bfj } \widehat{\Imath}_{\bfj + \bfs + \bfh}\big]\big)\,.\nonumber\\
&=: J_1(\bfs,\bfs+\bfh)-J_2(\bfs,\bfs+\bfh)\,.\label{eq:covvv}
\end{align}
We will make use of a variant of Theorem 17.2.1 in
Ibragimov and Linnik~\cite{ibragimov:linnik:1971}:
\begin{lemma}\label{lem:ibragimov}
  Let $(X_{\bft})_{\bft \in \mathbb{Z}^2}$ be a strictly stationary
  strongly mixing random field with mixing rate $\alpha$. 
Consider two sets $L_1, L_2\subset
  \mathbb{Z}^2$, whose distance (respective to the max-norm $\|\cdot\|$)
is larger than $l>0$, and measurable \fct s $Y_i$ of 
  $(X_{\bft})_{\bft \in L_i}$, $i=1,2$. If 
 $|Y_i| \le C_i$, $i=1,2$, for some constant $C_i$, $i=1,2$, then
\begin{align}
\label{eq:ibragimov}
|\cov(Y_1,Y_2)| \le 4\,C_1 \,C_2\,
  \alpha(l)\,.
\end{align}
\end{lemma}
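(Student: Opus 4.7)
The plan is to reduce the bounded case to the case of indicator functions by a layer-cake decomposition, then invoke the very definition of the strong mixing coefficient. Let $\mathcal{F}_i = \sigma(X_\bft,\bft\in L_i)$ for $i=1,2$. Since the max-norm distance between $L_1$ and $L_2$ exceeds $l$, the uniform bound $\sup_{j,k}\alpha_{j,k}(h)\le\alpha(h)$ assumed in the paper, together with approximation of $L_i$ by finite subsets (and a monotone-class argument), gives $\alpha(\mathcal{F}_1,\mathcal{F}_2)\le\alpha(l)$. Normalising by $U_i = Y_i/C_i$, it suffices to prove $|\cov(U_1,U_2)|\le 4\,\alpha(l)$ whenever $U_i$ is $\mathcal{F}_i$-measurable with $|U_i|\le 1$.

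For any $A\in\mathcal{F}_1$ and $B\in\mathcal{F}_2$, the very definition of the mixing coefficient gives
\begin{equation*}
  |\cov(\mathbf{1}_A,\mathbf{1}_B)| = |\P(A\cap B) - \P(A)\,\P(B)| \le \alpha(\mathcal{F}_1,\mathcal{F}_2) \le \alpha(l)\,.
\end{equation*}
I would then promote this to bounded random variables in two successive layer-cake steps. First, for $A\in\mathcal{F}_1$ and an $\mathcal{F}_2$-measurable $V$ with $0\le V\le 1$, Fubini applied to the identity $V=\int_0^1 \mathbf{1}_{\{V>t\}}\,dt$ yields $|\cov(\mathbf{1}_A,V)|\le \int_0^1\alpha(l)\,dt=\alpha(l)$. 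Splitting a general $V$ with $|V|\le 1$ as $V^+-V^-$ with $0\le V^\pm\le 1$ doubles this bound to $2\,\alpha(l)$. Applying the identical two-step procedure to the first argument (layer-cake on $U^+,U^-$ separately, each in $[0,1]$) replaces $\mathbf{1}_A$ by a generic $\mathcal{F}_1$-measurable $U$ with $|U|\le 1$ and produces one more factor of $2$, giving $|\cov(U,V)|\le 4\,\alpha(l)$. Multiplying through by $C_1 C_2$ then yields \eqref{eq:ibragimov}.

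The main obstacle — in fact a minor one — is the passage from the definition of $\alpha_{j,k}(h)$, which is stated on finite index sets $S,T$, to the $\alpha$-mixing bound between the full $\sigma$-fields $\mathcal{F}_1,\mathcal{F}_2$ when the sets $L_i$ are infinite; this is handled by approximating any bounded $\mathcal{F}_i$-measurable function in $L^1$ by functions measurable with respect to a finite sub-family of $(X_\bft)_{\bft\in L_i}$ and passing to the limit in both sides of the covariance inequality. Once this reduction is in place, the remaining content is purely measure-theoretic bookkeeping of the multiplicative constant $4$, which is the well-known sharp constant produced by this argument and agrees with the statement of the lemma. No probabilistic input beyond Rosenblatt's definition of strong mixing is needed.
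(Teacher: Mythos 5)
Your argument is correct, and it is worth noting that the paper itself offers no proof of this lemma at all: it is stated as ``a variant of Theorem 17.2.1 in Ibragimov and Linnik'' and used as a black box. So you have supplied a self-contained proof where the paper relies on the literature. The classical proof in \cite{ibragimov:linnik:1971} (and in Doukhan's monograph) runs differently: one writes $\cov(Y_1,Y_2)=\E\big[Y_1\,(\E[Y_2\mid\mathcal F_1]-\E[Y_2])\big]$, replaces $Y_1$ by the $\pm1$-valued variable $\eta=\sign(\E[Y_2\mid\mathcal F_1]-\E[Y_2])$ at the cost of a factor $C_1$, repeats the trick in the second argument to reduce to two $\pm1$-valued variables, and finally expands $\cov(\1_A-\1_{A^c},\1_B-\1_{B^c})$ into four indicator covariances, each bounded by $\alpha(l)$ --- whence the same constant $4$. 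Your layer-cake route ($V=\int_0^1\1_{\{V>t\}}\,dt$ plus the splitting $V=V^+-V^-$ in each argument) is an equally valid and arguably more elementary alternative; the Fubini interchange is justified since all integrands are bounded, and the bookkeeping $2\times 2=4$ is right. Your remark on passing from the finite-index-set coefficients $\alpha_{j,k}$ to the full $\sigma$-fields $\sigma(X_\bft,\bft\in L_i)$ via $L^1$-approximation by finitely based functions is exactly the point that needs saying, since the paper's $\alpha_{j,k}(h)$ is defined only over finite $S,T$; this is standard but correctly identified as the only non-mechanical step. No gaps.
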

Then we get for $\bfii,\bfj\in\Lambda_n^2$,
$
|\cov(\widehat{\Imath}_{\bfii}, \widehat{\Imath}_{\bfj})| 
\le c \,\alpha(\|\bfii - \bfj\|)\,,
$ and for $\|\bfs\|, \|\bfs+\bfh\|<n$,
\beam
|J_2(\bfs,\bfs+\bfh)| &\le  & 
  \dfrac{m_n^2}{n^4}  \sum_{i_1 =1}^{n-s_1} \sum_{i_2 =1}^{n-s_2}
  \sum_{j_1 =1}^{n-s_1 -h_1} \sum_{j_2 =1}^{n-s_2 -h_2} \big|
\E \big[   \widehat{\Imath}_{\bfii} \widehat{\Imath}_{\bfii+\bfs}
    \big] \big| \, \big|\E\big[\widehat{\Imath}_{\bfj } \widehat{\Imath}_{\bfj + \bfs + \bfh}
  \big] \big|\nonumber\\ 
 & \le& c m_n^2 \alpha(\|\bfs \|) \alpha(\|\bfs + \bfh \|)\,.\label{eq:xxx} 
\eeam
We observe that for $\bfii\not\in\{\bfj,\bfk,\bfl\}$,
\beam \label{eq:mixingii}
\big|\E\big[\widehat{\Imath}_{\bfii }
\widehat{\Imath}_{\bfj } \widehat{\Imath}_{\bfk }
\widehat{\Imath}_{\bfl }\big]\big|=\big|\cov\big(\widehat{\Imath}_{\bfii },
\widehat{\Imath}_{\bfj } \widehat{\Imath}_{\bfk }
\widehat{\Imath}_{\bfl }\big)\big|\le  c \,\alpha(d_{\bfii; \bfj \bfk \bfl})\,,
\eeam
where $d_{\bfii; \bfj \bfk \bfl}$ is the distance between $\{\bfii\}$ and
$\{\bfj,\bfk,\bfl\}$ with respect to $\|\cdot\|$.
Similarly, for  disjoint
$\{\bfii,\bfj\},\{\bfl,\bfk\}$,
\begin{eqnarray}\label{eq:mixingij}
\big|\E [\widehat{\Imath}_{\bfii} \widehat{\Imath}_{\bfj}
  \widehat{\Imath}_{\bfk} \widehat{\Imath}_{\bfl}] 
  -\E[\widehat{\Imath}_{\bfii} \widehat{\Imath}_{\bfj}]
  \E[\widehat{\Imath}_{\bfk} \widehat{\Imath}_{\bfl}] \big| & \le &c\, 
\alpha(d_{\bfii \bfj; \bfk\bfl})\,,
\end{eqnarray}
where $d_{\bfii \bfj; \bfk\bfl}$ is the distance between $\{\bfii,\bfj\}$ and 
$\{\bfk,\bfl\}$.
\begin{proof}[Proof of Lemma~\ref{lem:q2neg}]
Due to
  \eqref{eq:new41}  
there exists a constant $c$ such that
  $|\psi_{\bfh}^{(n)}|\le c$ for all $\bfh$ satisfying $\|\bfh\|<n$.  By Markov's inequality and Lemma~\ref{lem:psih}  
we have for $\vep>0$,
  \beam
    Q_2 &=&\P \Big(
     \frac{n}{\sqrt{m_n}} \Big| \sum_{\bfh:3 r_n < \|\bfh \| \le n}
     \psi_{\bfh}^{(n)}(\wh{\gamma} (\bfh) - \E [
     \wh{\gamma} (\bfh)]) \Big| >\varepsilon\Big)\nonumber\\
&\le &\dfrac{n^2}{m_n\,\vep^2} \E\Big[\Big(\sum_{\bfh:3 r_n < \|\bfh \| \le n}
     \psi_{\bfh}^{(n)}(\wh{\gamma} (\bfh) - \E [
     \wh{\gamma} (\bfh)])
\Big)^2\Big]\label{eq:renwe}\\
& \le& \frac{c \,n^2}{m_n} \sum_{3r_n<\|\bfs\|, \|\bfs+\bfh\|\le n}
|\psi_\bfs^{(n)}\psi_{\bfs+\bfh}^{(n)}|
\big| \cov \big(\widehat{\gamma}(\bfs), \widehat{\gamma} (\bfs+\bfh) \big)
  \big|\nonumber\\
& \le& \underbrace{\frac{c \,n^2}{m_n} \sum_{3r_n<\|\bfs\|, \|\bfs+\bfh\|\le n}\big(|\psi_\bfs^{(n)}\psi_{\bfs+\bfh}^{(n)}|\,|J_1(\bfs,\bfs+\bfh)|}_{=:J_3}
+|J_2(\bfs,\bfs+\bfh)|\big)\nonumber\\
&\le &J_3+c\,\Big(n m_n^{1/2} \sum_{\bfh: \|\bfh \|>3r_n} \alpha\big( \|\bfh \|\Big)^2
= J_3 + O\big((n\,m_n^{1/2}\alpha(r_n))^2\big)=J_3+o(1)\,.\nonumber
  \eeam
In the last step we used \eqref{eq:xxx} and condition \eqref{eq:newlabel}.
The second term converges to zero in view of \eqref{eq:newlabel}.
We have 
\beao
J_3&\le &c\,\dfrac {m_n}{n^2}\sum_{3r_n<\|\bfs\|, \|\bfs+\bfh \|<n}
|\psi_\bfs^{(n)}\psi_{\bfs+\bfh}^{(n)}|
\sum_{\bfii, \bfj
  \in \Lambda_n^2} \big|\E
  \big[ \widehat{\Imath}_{\bfii} \widehat{\Imath}_{\bfii+\bfs}
  \widehat{\Imath}_{\bfj} \widehat{\Imath}_{\bfj+\bfs+\bfh} \big] \big|
\eeao
We consider 3 cases:
{\bf 1.} $d_{\bfii (\bfii+\bfs); \bfj (\bfj+\bfs+\bfh)}>r_n$, 
{\bf 2.} $d_{\bfii \bfj; (\bfii+\bfs) (\bfj+\bfs+\bfh)}>r_n$ and 
{\bf 3.} $d_{\bfii (\bfj+\bfs+\bfh);(\bfii+\bfs)\bfj}>r_n$. In case 1. we can apply \eqref{eq:mixingij} and obtain
\beam\label{eq:nov2021a}
\big|\E
  \big[ \widehat{\Imath}_{\bfii} \widehat{\Imath}_{\bfii+\bfs}
  \widehat{\Imath}_{\bfj} \widehat{\Imath}_{\bfj+\bfs+\bfh} -\E[\wh I_{\bfii}\,\wh I_{\bfii+\bfs}]\,
\E[\wh I_{\bfj}\,\wh I_{\bfj+\bfs+\bfh}]\big] \big|\le c\,\a(r_n)\,.
\eeam
Together with \eqref{eq:xxx} and  \eqref{eq:newlabel} we conclude that
\beao&&
\dfrac {m_n}{n^2}\sum_{3r_n<\|\bfs\|, \|\bfs+\bfh \|<n}
|\psi_\bfs^{(n)}\psi_{\bfs+\bfh}^{(n)}|
\sum_{\bfii, \bfj
  \in \Lambda_n^2\,,d_{\bfii (\bfii+\bfs); \bfj (\bfj+\bfs+\bfh)}>r_n} \big|\E
  \big[ \widehat{\Imath}_{\bfii} \widehat{\Imath}_{\bfii+\bfs}
  \widehat{\Imath}_{\bfj} \widehat{\Imath}_{\bfj+\bfs+\bfh} \big] \big|\\
&\le &c\,m_n\,n^6\,\alpha(r_n) +o(1)=o(1)\,.
\eeao

Now we consider the case that 1. does not hold, i.e., one of the following cases appears:\\
{\bf 1a.} $\|\bfii-\bfj\|\le r_n$. But then we have $\|\bfii-(\bfii+\bfs)\|=\|\bfs\|>3\,r_n$,
$\|\bfii-(\bfj+\bfs+\bfh)\|\ge \|\bfs+\bfh\|-\|\bfii-\bfj\|\ge 2\,r_n$ and 
$\|\bfj-(\bfj+\bfs+\bfh)\|=\|\bfs+\bfh\|>3r_n$, $\|\bfj-(\bfii+\bfs)\|\ge \|\bfs\|-\|\bfii-\bfj\|\ge 2r_n$.
Hence we can apply case~2.\\
{\bf 1b.} $\|\bfii-(\bfj+\bfs+\bfh)\|\le r_n$. By the triangle inequality,
$2r_n\le \|\bfs+\bfh\|-r_n\le \|\bfii-\bfj\|$, $\|\bfii-(\bfii+\bfs)\|=\|\bfs\|>3 \,r_n$,
$\|(\bfj+\bfs+\bfh)-\bfj\|=\|\bfs+\bfh\|>3r_n$ 
and  $\|(\bfii+\bfs)-(\bfj+\bfs+\bfh)\|=\|\bfii-(\bfj+\bfs+\bfh)+\bfs \|\ge 
\|\bfs\|-\|\bfii-(\bfj+\bfs+\bfh)\|\ge 2 r_n$.
Hence we can apply case 3.\\
{\bf 1c.} $\|(\bfii+\bfs)-\bfj|\le r_n$. By the triangle inequality,  
$2r_n\le \|\bfs\|-r_n\le \|\bfii-\bfj\|$.
We also have $\|\bfii-(\bfii+\bfs)\|=\|\bfs\|>3r_n$, $\|\bfj-(\bfj+\bfs+\bfh)\|=\|\bfs+\bfh\|>3r_n$
and $\|(\bfii+\bfs)- (\bfj+\bfs+\bfh)\|\ge \|\bfs+\bfh\| -\|(\bfii+\bfs)-\bfj\|\ge 2r_n$.
Hence we can apply case 3.\\
{\bf 1d.} $\|(\bfii+\bfs)-(\bfj+\bfs+\bfh)|\le r_n$. Then $\|\bfii-(\bfii+\bfs))\|=\|\bfs\|>3r_n$, 
$\|\bfj-(\bfj+\bfs+\bfh)\|=\|\bfs+\bfh\|>3r_n$ and 
$\|\bfii-(\bfj+\bfs+\bfh)\|=\|(\bfii+\bfs)-(\bfj+\bfs+\bfh)-\bfs\|\ge \|\bfs\|-
\|(\bfii+\bfs)-(\bfj+\bfs+\bfh)\|\ge 2r_n$, 
$\|(\bfii+\bfs)-\bfj\|=\|(\bfii+\bfs)-(\bfj+\bfs+\bfh)+(\bfs+\bfh)\|\ge \|\bfs+\bfh\|-\|(\bfii+\bfs)-(\bfj+\bfs+\bfh)\|\ge 2r_n$. Hence we can apply case 2.
\par
Now assume that 2. is satisfied. Then
\beao
&&\big|\E
  \big[ \widehat{\Imath}_{\bfii} \widehat{\Imath}_{\bfj}
  \widehat{\Imath}_{\bfii+\bfs} \widehat{\Imath}_{\bfj+\bfs+\bfh} -\E[\wh I_{\bfii}\,\wh I_{\bfj}]\,
\E[\wh I_{\bfii+\bfs}\,\wh I_{\bfj+\bfs+\bfh}]\big] \big| +\big|\E[\wh I_{\bfii}\,\wh I_{\bfj}]\,
\E[\wh I_{\bfii+\bfs}\,\wh I_{\bfj+\bfs+\bfh}]\big]\big|\\
&\le &\alpha(r_n)+\alpha(\|\bfii-\bfj\|)\,\alpha(\|\bfii-(\bfj+\bfh)\|)\,.
\eeao
If $\|\bfii-\bfj\|>r_n$ or $\|\bfii-\bfj\|\le r_n$ and $\|\bfh\|>2r_n$ then the \rhs\ is $O(\a(r_n))$ and we can proceed as in case 1.
Now assume that $\|\bfii-\bfj\|\le r_n$ and $\|\bfh\|\le 2r_n$.
Then we have by stationarity,
\beao
\big|\E
  \big[ \widehat{\Imath}_{\bfii} \widehat{\Imath}_{\bfj}
  \widehat{\Imath}_{\bfii+\bfs} \widehat{\Imath}_{\bfj+\bfs+\bfh}\big|
&=&\big|\E
  \big[ \widehat{\Imath}_{\bf0} \widehat{\Imath}_{\bfj-\bfii}
  \widehat{\Imath}_{\bfs} \widehat{\Imath}_{(\bfj-\bfii)+\bfs+\bfh}\big|\\
&\le &\alpha(r_n) +|\E[\widehat{\Imath}_{\bf0} \widehat{\Imath}_{\bfj-\bfii}]|\,
|\E[\widehat{\Imath}_{\bf0} \widehat{\Imath}_{(\bfj-\bfii)+\bfh}]|\\
&\le &\a(r_n)+ c\,m_n^{-2} \,.
\eeao
Thus it remains to bound the following expression:
\beam\label{eq:999}&&
\dfrac {1}{n^2m_n}\sum_{3r_n<\|\bfs\|, \|\bfs+\bfh \|<n\,,\|\bfh\|\le 2r_n}
|\psi_\bfs^{(n)}\psi_{\bfs+\bfh}^{(n)}|
\#\{
\bfii, \bfj
  \in \Lambda_n^2:d_{\bfii \bfj; (\bfii+\bfs) (\bfj+\bfs+\bfh)}>r_n\,,\|\bfii-\bfj\|\le r_n\}\nonumber\\
&\le & c\,\dfrac {r_n^2}{m_n}\sum_{3r_n<\|\bfs\|, \|\bfs+\bfh \|<n\,,\|\bfh\|\le 2r_n}
|\psi_\bfs^{(n)}\psi_{\bfs+\bfh}^{(n)}|\,.\nonumber\\
\eeam
In view of the definition of $\psi_{\bfh}^{(n)}$ we have
\begin{eqnarray*}
  \psi_{\bfh}^{(n)} &= & 
 \frac{(2\pi)^2}{n^2} \sum_{\bfj \in \Lambda_n^2} (\cos(h_1
      \lambda_1) \cos (h_2 \lambda_{j_2}) - \sin(h_1 \lambda_{j_1})
      \sin (h_2 \lambda_{j_2}) ) g(\bfla_{\bfj})\\ 
 & =& \psi_{h_1+n,h_2}^{(n)} = \psi_{n-h_1,-h_2}^{(n)} = \psi_{n-h_1, n-h_2}^{(n)}\,.  
\end{eqnarray*}
Here we used the fact that $n\lambda_j = 2 \pi j$ and the
periodicity of the cosine and sine functions. Therefore, for
$n/2\le\|\bfh \|<n$, we can find $\overline{\bfh} \in \mathbb{N}^2$
such that $\| \overline{\bfh}\|<n/2$, $(|h_1|, |h_2|)\in \Lambda_n^2$
and $\psi_{\bfh}^{(n)} = \psi_{\overline{\bfh}}^{(n)}$. 
By virtue of this argument and in view of our previous calculations we
may assume that summation in \eqref{eq:999} is taken over the indices
 $\bfs,\bfs+\bfh$ whose norm is less than $n/2$.
\par
Applications of Lemma~\ref{lem:psih} yield
\beao
&&
\dfrac {r_n^2}{m_n}\sum_{3r_n<\|\bfs\|, \|\bfs+\bfh \|<\sqrt{n}\,,\|\bfh\|\le 2r_n}
|\psi_\bfs^{(n)}\psi_{\bfs+\bfh}^{(n)}|=O( (\log n)^4 r_n^2/m_n)=o(1)\,,\\
&&\dfrac {r_n^2}{m_n}\sum_{\sqrt{n}<\|\bfs\|, \|\bfs+\bfh \|<n/2\,,\|\bfh\|\le 2r_n}
|\psi_\bfs^{(n)}\psi_{\bfs+\bfh}^{(n)}|\le c \dfrac{r_n^2}{m_n} \dfrac 1{n^2} (nr_n)^2
=O(r_n^4/m_n)=o(1)\,.
\eeao
In the last step we used condition \eqref{eq:newlabel}. This proves the lemma
under 2.
\par
Now assume 3. Then
\beao
&&\big|\E
  \big[ \widehat{\Imath}_{\bfii} \widehat{\Imath}_{\bfj}
  \widehat{\Imath}_{\bfii+\bfs} \widehat{\Imath}_{\bfj+\bfs+\bfh} -\E[\wh I_{\bfii}\,\wh I_{\bfj+\bfs+\bfh}]\,
\E[\wh I_{\bfii+\bfs}\,\wh I_{\bfj}]\big] \big| +\big|\E[\wh I_{\bfii}\,\wh I_{\bfj+\bfs+\bfh}]\,
\E[\wh I_{\bfii+\bfs}\,\wh I_{\bfj}]\big]\big|\\
&\le &\alpha(r_n)+\alpha(\|\bfii-(\bfj+\bfs+\bfh)\|)\,\alpha(\|(\bfii+\bfs)-\bfj\|)\,.
\eeao
If  $\|(\bfii+\bfs)-\bfj\|>r_n$ or $\|\bfii-(\bfj+\bfs+\bfh)\|>r_n$ 
then the \rhs\ is $O(\a(r_n))$ and we can proceed as 
in cases 1. and 2. Now we assume that $\|(\bfii-\bfj)+\bfs\|\le r_n$ and 
$\|(\bfii-\bfj)-(\bfs+\bfh)\|\le r_n$. This means that $\bfii-\bfj $ is contained in the balls (\wrt\ $\|\cdot\|$) with radius $r_n$ and centers $-\bfs$ and 
$\bfs+\bfh$. However, this is impossible if $\bfs,\bfs+\bfh$ belong to the 
same quadrant. A glance at \eqref{eq:renwe} convinces one that
the right-hand expectation can be bounded by four expected values
containing sums of indices $\bfh$ from distinct quadrant. Since the previous
arguments work for each quadrant separately this finishes the proof.

\end{proof}

\end{document}